\definecolor{astral}{RGB}{46,116,181}
\newtheorem{theorem}{Theorem}[section]
\newtheorem{lemma}[theorem]{Lemma}
\newtheorem{corollary}[theorem]{Corollary}
\newtheorem{definition}[theorem]{Definition}
\newtheorem{example}[theorem]{Example}
\definecolor{darkslategray}{rgb}{0.18, 0.31, 0.31}
\definecolor{warmblack}{rgb}{0.0, 0.26, 0.26}
\definecolor{thrdfc}{rgb}{0.36, 0.54, 0.66}
\definecolor{bole}{rgb}{0.55, 0.71, 0.0}
\journal{...}
\newcommand{\mc}{\mathcal}
\newcommand{\mb}{\mathbb}
\newcommand{\dg}{{\dagger}}
\newcommand{\m}{*_{M}}
\newcommand{\n}{*_{N}}
\newcommand{\1}{*_{1}}
\newcommand{\2}{*_{2}}
\begin{document}

\begin{frontmatter}

\title{ \textcolor{warmblack}{\bf Numerical range for weighted Moore-Penrose inverse of  tensor}}

\author{Aaisha Be$^{\dag, a}$, Vaibhav Shekhar$^{*, b}$, Debasisha Mishra{$^{\dag, c, \mu}$}}

\address{$^{\dag}$Department of Mathematics,\\
National Institute of Technology Raipur,\\
Raipur, Chhattisgarh, India.\\
\vspace{0.15cm}
$^{^*}$Department of Mathematics,\\
Indian Institute of Technology Delhi,\\
New Delhi-110016, India.\\

$^{^\mu}$Corresponding Author\\

\textit{E-mail$^a$}: \texttt{aaishasaeed7\symbol{'100}gmail.com}\\
\textit{E-mail$^b$}: \texttt{vaibhav\symbol{'100}maths.iitd.ac.in}\\
\textit{E-mail$^c$}: \texttt{dmishra@nitrr.ac.in}}

                      \begin{abstract}
\textcolor{warmblack}{This article first introduces the notion of weighted singular value decomposition (WSVD) of a tensor via the Einstein product. The WSVD is then used to compute the weighted Moore-Penrose inverse of an arbitrary-order tensor. We then define the notions of weighted normal tensor for an even-order square tensor and weighted tensor norm. Finally, we apply these to study the theory of numerical range for the weighted Moore-Penrose inverse of an even-order square tensor and exploit its several properties. We also obtain a few new results in the matrix setting that generalizes some of the existing results as particular cases.}
\end{abstract}

\begin{keyword}
Tensor, Einstein product, Numerical range,  Numerical radius, Weighted Moore-Penrose inverse.\\
\vspace{0.2cm}
{\it Mathematics Subject Classification}: 15A69, 15A60.
\end{keyword}

\end{frontmatter}

\section{Introduction}\label{sec1}
The terms numerical range and numerical radius have drawn significant attention from the researchers in the last few decades in the field of matrix and operator theory \cite{bonsall1971, bonsall1973, halmos1967, horn1991}. These have been widely studied because of their applications in many areas, such as 
numerical analysis and differential equations \cite{axelsson1994, cheng1999, eiermann1993, fiedler1995, goldberg1982, kirkland2001, maroulas2002, psarrakos2002}. The numerical range (or the field of values) of a square matrix $A\in \mb{C}^{n\times n}$ is a subset of complex numbers defined as: 
                  \begin{equation}\label{eq:nrmtrx}
      W(A)=\{\left\langle Ax,x\right\rangle:\;x\in\mb{C}^{n},\|x\|=1\},
\end{equation}
where $\left\langle x,y\right\rangle=y^{*}x$ for $x,y\in \mb{C}^{n}$ and $\|x\|=\left\langle x,x\right\rangle^{1/2}$. And, the numerical radius of the matrix $A$ is defined as:
            \begin{equation}\label{def:nrd matrx}
              w(A)=\textnormal{max}\{|z|:\;z\in  W(A)\}.
\end{equation}
One of the main reasons for emphasizing the numerical range concept is its many attractive properties. For example, $W(A)$ is a convex subset of $\mb{C}$ (known as Toeplitz-Hausdorff theorem \cite{horn1991}). Further, the numerical range of a matrix contains its spectrum (or the set of all eigenvalues). The numerical radius 
is frequently employed as a more reliable indicator of the rate of convergence of iterative methods than the spectral radius \cite{axelsson1994, eiermann1993}. In 2016, Ke {\it et al.} \cite{ke2016}  introduced tensor numerical ranges using tensor inner products and tensor norms via the $k$-mode product, which may not be convex in general (see Example 1, \cite{ke2016}). In 2021, Pakmanesh and Afshin \cite{pakmanesh2021} continued the same study for even-order tensors and proved the convexity for the numerical range of an even-order tensor.
In the same year, Rout {\it et al.} \cite{nirmal} introduced tensor numerical ranges using tensor inner products and tensor norms via the Einstein product. The authors \cite{nirmal}  studied several fundamental notions of tensor numerical ranges, such as unitary invariance, spectral containment, and convexity. 
Furthermore, they developed an algorithm to plot the boundary of the numerical range of a tensor, which helps to design faster algorithms for the calculations of its eigenvalues. 
To understand tensor numerical ranges, we first recall some basic facts about tensors. Tensors are generalizations of scalars (that have no index), vectors (that have precisely one index), and matrices (that have precisely two indices) to an arbitrary number of indices. A tensor is represented as a multidimensional array. An $N^{th}$-order tensor is defined as:  $$ \mc{A}=(a_{i_{1}i_{2}\ldots i_{N}})\in \mb{C}^{I_{1}\times \cdots \times I_{N}};~1\leq i_k \leq I_k~\textnormal{for each}~k=1,2,...,N, $$ where each mode $I_k$ is a natural number and the notation $a_{i_{1}\ldots i_{N}}$ represents an $(i_{1},\ldots,i_{N})^{th}$ element of $\mc{A}$. For simplicity, $I_{1}\times I_{2}\times \cdots \times I_{N}$ is also written as $I_{1\ldots N}$. The transpose of a tensor may not be unique, which is recalled here.
Let $\mc{A}\in \mb{C}^{I_{1}\times I_{2}\times \cdots \times I_{M}}$ be a tensor and let $\pi$ be a permutation in $S_{M}$ except the identity permutation, where $S_{M}$ represents the permutation group over the set $\{1, 2, \ldots, M\}$. Then, the $\pi$-transpose of the tensor $\mc{A}$ is defined as
           \begin{equation}
           \mc{A}^{T_{\pi}}=(a_{i_{\pi(1)}i_{\pi(2)}\ldots i_{\pi(M)}})\in \mb{C}^{I_{\pi(1)}\times I_{\pi(2)}\times \cdots \times I_{\pi(M)}}.
\end{equation}
Thus, there are $M!-1$ possible transposes associated with a tensor $\mc{A}\in \mb{C}^{I_{1}\times I_{2}\times \cdots \times I_{M}}$. 
In particular, for $\mc{A}=(a_{i_{1}i_{2}\ldots i_{M}j_{1}j_{2} 
\ldots j_{N}})\in \mb{C}^{I_{1}\times I_{2}\times \cdots \times I_{M}\times J_{1}\times J_{2}\times \cdots \times J_{N}}$ and $\pi\in S_{M+N}$ such that 
$\mc{A}^{T_{\pi}}=(b_{j_{1}j_{2}\ldots j_{N}i_{1}i_{2}\ldots i_{M}})=(a_{i_{1}i_{2}\ldots i_{M}j_{1}j_{2}\ldots j_{N}})\in \mb{C}^{J_{1}\times J_{2}\times \cdots \times J_{N}\times I_{1}\times I_{2}\times \cdots \times I_{M}}$, then it is simply written as $\mc{A}^{T}$. Similarly, 
 the conjugate transpose of $\mc{A}$ is denoted by $\mc{A}^{H}$ and defined by $\mc{A}^{H}=(c_{j_{1}j_{2}\ldots j_{N}i_{1}i_{2}\ldots i_{M}})=(\overline{a}_{i_{1}i_{2}\ldots i_{M}j_{1}j_{2}\ldots j_{N}})\in \mb{C}^{J_{1}\times J_{2}\times \cdots \times J_{N}\times I_{1}\times I_{2}\times \cdots \times I_{M}}$, where bar denotes the complex conjugate of a number. Furthermore, if $\mc{A}\in \mb{C}^{I_{1}\times I_{2}\times \cdots \times I_{M}}$, then $\mc{A}^{T}=(a_{1i_{1}i_{2}\ldots i_{M}})\in \mb{C}^{1\times I_{1}\times I_{2}\times \cdots \times I_{M}}$.
There are two ways to define a square tensor. One when each mode is of equal size, i.e., $n\times n\times \cdots \times n$ and another when the first $N$ modes are repeated in the same order, i.e., $I_{1}\times I_{2}\times \cdots \times I_{N}\times I_{1}\times I_{2}\times \cdots \times I_{N}$. Ke {\it et al.} \cite{ke2016} extended the notion of the numerical range of a matrix for the former type of square tensors.  
Further, Rout {\it et al.} \cite{nirmal} extended the numerical range for the latter type of square tensors. They also obtained a few properties of tensor
numerical range of the Moore–Penrose inverse via the Einstein product. We aim to study these properties of tensor numerical range for the weighted Moore-Penrose inverse of a tensor. For this purpose, we recall the Einstein product below.\par
The Einstein product \cite{einstein2007} $\mc{A}\n\mc{B} \in \mb{C}^{I_{1\ldots M} \times J_{1\ldots L} }$ of tensors $\mc{A} \in \mb{C}^{I_{1 \ldots M} \times K_{1 \ldots N} }$ and $\mc{B} \in \mb{C}^{K_{1\ldots N} \times J_{1\ldots L} }$ is defined by the operation $\n$ via
            \begin{equation*}\label{Eins}
            (\mc{A}\n\mc{B})_{i_{1}\ldots i_{M}j_{1}\ldots j_{L}}
           =\displaystyle\sum_{k_{1},\ldots, k_{N}}a_{{i_{1}\ldots i_{M}}{k_{1}\ldots k_{N}}}b_{{k_{1}\ldots k_{N}}{j_{1}\ldots j_{L}}}.
\end{equation*}
The associative law for the Einstein product holds. In the above formula, if $\mc{B} \in \mb{C}^{K_{1\ldots N}}$, then $\mc{A}\n\mc{B} \in \mb{C}^{I_{1 \ldots M}}$ and 
             \begin{equation*}
            (\mc{A}\n\mc{B})_{i_{1}\ldots i_{M}} = \displaystyle\sum_{k_{1},\ldots, k_{N}}
            a_{{i_{1}\ldots i_{M}}{k_{1}\ldots k_{N}}}b_{{k_{1}\ldots k_{N}}}.
\end{equation*}
This product is used in the study of the theory of relativity \cite{einstein2007} and in the area of continuum mechanics \cite{lai2009}. Let $A\in \mb{R}^{m \times n}$ and $B\in \mb{R}^{n \times l}$. Then, the Einstein product $\1$  reduces to the standard matrix multiplication as
$$(A\1B)_{ij}= \displaystyle\sum_{k=1}^{n} a_{ik}b_{kj}.$$
 \noindent We refer to \cite{huang2020} for further advantages of studying the theory of tensors via the Einstein product. 
 In 2019, Liang and Zheng \cite{liang2019}  introduced the notion of the eigenvalues of an even-order square tensor via the Einstein product as follows.
\begin{definition}[Definition 2.3, \cite{liang2019}]\label{egndfn}\leavevmode\\
Let $\mc{A}\in \mb{C}^{I_{1 \ldots N}\times I_{1 \ldots N}}$. Then, a complex number $\lambda$ is called an eigenvalue of $\mc{A}$ if there exists a nonzero tensor $\mc{X}\in \mb{C}^{I_{1 \ldots N}}$ such that  
              \begin{equation} 
             \mc{A}\n \mc{X}=\lambda \mc{X}.
\end{equation} 
     The tensor $\mc{X}$ is called an eigentensor with respect to $\lambda$.
\end{definition}
\noindent The set of all the eigenvalues of $\mc{A}$ is denoted by $\sigma(\mc{A})$. The spectral radius of the tensor $\mc{A}$ is  denoted by $\rho(\mc{A})$ and defined by
$\rho(\mc{A})=\max\{|\lambda|: \lambda \text{ is an eigenvalue of } \mc{A}\}.$ Furthermore, the positive square roots of eigenvalues of $\mc{A}^{H}\n\mc{A}$ are called the {\it singular values} of $\mc{A}$. The maximum singular value of $\mc{A}$ is called the {\it spectral norm} \cite{ma2019} of the tensor $\mc{A}$. 
{In 2013, Brazell {\it et al.} \cite{brazell2013}  first introduced the notion of the inverse of a tensor via the Einstein product. For $\mc{A}\in\mb{C}^{I_{1\ldots N}\times I_{1\ldots N}}$, if there exists a tensor $\mc{X}\in\mb{C}^{I_{1\ldots N}\times I_{1\ldots N}}$ such that $\mc{A}\n\mc{X}=\mc{I}=\mc{X}\n\mc{A}$, then the tensor $\mc{X}$ is called the {\it inverse} of the tensor $\mc{A}$ and it is denoted by $\mc{A}^{-1}$.}
In 2016, Sun {\it et al.} \cite{sun2016} formally introduced  a generalized inverse called the {\it Moore-Penrose inverse} of an even-order tensor via the Einstein product. The authors \cite{sun2016} then used the Moore-Penrose inverse to find the minimum-norm least-squares solution of some multilinear systems. Panigrahy and Mishra \cite{panigrahy2018}, Stanimirovi\'c {\it et al.} \cite{stanimirovic2020},  and  Liang and Zheng \cite{liang2019} independently improved the definition of the Moore-Penrose inverse of an even-order tensor to a tensor of any order via the same product.  In 2017, Ji and Wei \cite{ji2017} defined the weighted Moore-Penrose inverse for an even-order square tensor, and then in 2020, Behera {\it et al.} \cite{behera2020} extended the definition to an arbitrary-order tensor. The definition of the weighted Moore-Penrose inverse of an arbitrary-order tensor and one result are recalled here.
          \begin{definition}[Definition 8, \cite{behera2020}]\label{defwmpi}\leavevmode\\
          Let $\mc{A} \in \mb{C}^{I_{1 \ldots M} \times J_{1 \ldots N}}$, and $\mc{M} \in \mb{C}^{I_{1 \ldots M} \times I_{1 \ldots M}}$, $\mc{N} \in \mb{C}^{J_{1 \ldots N} \times J_{1 \ldots N}}$ be two Hermitian positive definite tensors. Then, the tensor $\mc{X} \in \mb{C}^{J_{1 \ldots N} \times I_{1 \ldots M}}$ is called the weighted Moore-Penrose inverse of $\mc{A}$ if it satisfies the following four tensor equations:
    \begin{eqnarray}
       \mc{A}\n\mc{X}\m\mc{A} &=& \mc{A};\label{wmpeq1}\\
       \mc{X}\m\mc{A}\n\mc{X} &=& \mc{X};\label{wmpeq2}\\
      (\mc{M}\m \mc{A}\n\mc{X})^{H} &=& \mc{M}\m\mc{A}\n\mc{X};\label{wmpeq3}\\
      (\mc{N}\n\mc{X}\m\mc{A})^{H} &=& \mc{N}\n\mc{X}\m\mc{A}.\label{wmpeq4}
\end{eqnarray}
It is denoted by $\mc{A}_{\mc{M},\mc{N}}^{\dg}$.
In particular, if $\mc{M}$ and $\mc{N}$ are the identity tensors, then $\mc{A}_{\mc{M},\mc{N}}^{\dg}=\mc{A}^{\dg}$, the Moore-Penrose inverse \cite{panigrahy2018} of $\mc{A}.$  
\end{definition}

                  \begin{theorem}[Theorem 4, \cite{behera2020}]\label{thmwmpi}\leavevmode\\
Let $\mc{A} \in \mb{C}^{I_{1 \ldots M} \times J_{1 \ldots N}}$, and $\mc{M} \in \mb{C}^{I_{1 \ldots M} \times I_{1 \ldots M}}$, $\mc{N} \in \mb{C}^{J_{1 \ldots N} \times J_{1 \ldots N}}$ be two Hermitian positive definite tensors. If $\tilde{\mc{A}}=\mc{M}^{1/2}\m\mc{A}\n\mc{N}^{-1/2}$, then $$\mc{A}_{\mc{M}, \mc{N}}^{\dg}=\mc{N}^{-1/2}\n\tilde{\mc{A}}^{\dg}\m\mc{M}^{1/2}.$$ 
\end{theorem}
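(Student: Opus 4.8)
The plan is to verify directly that the tensor
$$\mc{X}:=\mc{N}^{-1/2}\n\tilde{\mc{A}}^{\dg}\m\mc{M}^{1/2}$$
satisfies the four defining relations \eqref{wmpeq1}--\eqref{wmpeq4} of the weighted Moore-Penrose inverse of $\mc{A}$; uniqueness of that inverse then yields $\mc{A}_{\mc{M},\mc{N}}^{\dg}=\mc{X}$. The only external ingredients are: (i) the standard facts that a Hermitian positive definite tensor has a Hermitian positive definite square root, so $(\mc{M}^{1/2})^{H}=\mc{M}^{1/2}$, $\mc{M}^{1/2}\m\mc{M}^{1/2}=\mc{M}$, $\mc{M}^{1/2}$ is invertible with $\mc{M}^{1/2}\m\mc{M}^{-1/2}=\mc{M}^{-1/2}\m\mc{M}^{1/2}=\mc{I}$ and $\mc{M}\m\mc{M}^{-1/2}=\mc{M}^{1/2}$, and likewise for $\mc{N}$ with the product $\n$; (ii) the four Penrose equations for the ordinary Moore-Penrose inverse of the arbitrary-order tensor $\tilde{\mc{A}}$, namely $\tilde{\mc{A}}\n\tilde{\mc{A}}^{\dg}\m\tilde{\mc{A}}=\tilde{\mc{A}}$, $\tilde{\mc{A}}^{\dg}\m\tilde{\mc{A}}\n\tilde{\mc{A}}^{\dg}=\tilde{\mc{A}}^{\dg}$, $(\tilde{\mc{A}}\n\tilde{\mc{A}}^{\dg})^{H}=\tilde{\mc{A}}\n\tilde{\mc{A}}^{\dg}$, $(\tilde{\mc{A}}^{\dg}\m\tilde{\mc{A}})^{H}=\tilde{\mc{A}}^{\dg}\m\tilde{\mc{A}}$; and (iii) associativity of the Einstein product together with the reverse-order rule $(\mc{P}\n\mc{Q})^{H}=\mc{Q}^{H}\n\mc{P}^{H}$ and its $\m$-analogue.

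First I would invert the defining relation for $\tilde{\mc{A}}$: from $\tilde{\mc{A}}=\mc{M}^{1/2}\m\mc{A}\n\mc{N}^{-1/2}$ and the cancellations above one gets $\mc{A}=\mc{M}^{-1/2}\m\tilde{\mc{A}}\n\mc{N}^{1/2}$, which will be substituted repeatedly. Substituting this and $\mc{X}$ into $\mc{A}\n\mc{X}$ and $\mc{X}\m\mc{A}$ and cancelling the matching half-powers $\mc{N}^{1/2}\n\mc{N}^{-1/2}=\mc{I}$ and $\mc{M}^{1/2}\m\mc{M}^{-1/2}=\mc{I}$ produces the two basic reductions
$$\mc{A}\n\mc{X}=\mc{M}^{-1/2}\m(\tilde{\mc{A}}\n\tilde{\mc{A}}^{\dg})\m\mc{M}^{1/2},\qquad \mc{X}\m\mc{A}=\mc{N}^{-1/2}\n(\tilde{\mc{A}}^{\dg}\m\tilde{\mc{A}})\n\mc{N}^{1/2}.$$
These are the workhorses for all four checks.

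Then I would verify the equations one at a time. For \eqref{wmpeq1}, compose the first reduction with $\mc{A}=\mc{M}^{-1/2}\m\tilde{\mc{A}}\n\mc{N}^{1/2}$ on the right, cancel $\mc{M}^{1/2}\m\mc{M}^{-1/2}=\mc{I}$, and apply the first Penrose equation for $\tilde{\mc{A}}$ to collapse $\tilde{\mc{A}}\n\tilde{\mc{A}}^{\dg}\m\tilde{\mc{A}}$ to $\tilde{\mc{A}}$, recovering $\mc{A}$; equation \eqref{wmpeq2} is the mirror computation using the second Penrose equation. For \eqref{wmpeq3}, multiply the first reduction on the left by $\mc{M}$ and use $\mc{M}\m\mc{M}^{-1/2}=\mc{M}^{1/2}$ to get $\mc{M}\m\mc{A}\n\mc{X}=\mc{M}^{1/2}\m(\tilde{\mc{A}}\n\tilde{\mc{A}}^{\dg})\m\mc{M}^{1/2}$; taking conjugate transpose, the outer factors are fixed because $\mc{M}^{1/2}$ is Hermitian, and the middle block is Hermitian by the third Penrose equation, so the whole expression equals its own conjugate transpose. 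Equation \eqref{wmpeq4} is identical with $\mc{N}$, the second reduction, and the fourth Penrose equation. Finally, uniqueness of the weighted Moore-Penrose inverse gives $\mc{A}_{\mc{M},\mc{N}}^{\dg}=\mc{N}^{-1/2}\n\tilde{\mc{A}}^{\dg}\m\mc{M}^{1/2}$.

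The arithmetic is routine once the reductions are in place; the only real care needed is bookkeeping the two distinct Einstein products $\m$ and $\n$ — which index block each one contracts, the fact that in each displayed product the half-powers of $\mc{M}$ attach via $\m$ and those of $\mc{N}$ via $\n$, and that conjugate transposition reverses a product while keeping $\m$-factors as $\m$-factors and $\n$-factors as $\n$-factors on the appropriately sized tensors. I expect this index-range bookkeeping, rather than any conceptual step, to be the main thing to get right.
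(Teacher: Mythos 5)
Your verification is correct: with $\mc{A}=\mc{M}^{-1/2}\m\tilde{\mc{A}}\n\mc{N}^{1/2}$ the two reductions $\mc{A}\n\mc{X}=\mc{M}^{-1/2}\m(\tilde{\mc{A}}\n\tilde{\mc{A}}^{\dg})\m\mc{M}^{1/2}$ and $\mc{X}\m\mc{A}=\mc{N}^{-1/2}\n(\tilde{\mc{A}}^{\dg}\m\tilde{\mc{A}})\n\mc{N}^{1/2}$ do yield all four equations \eqref{wmpeq1}--\eqref{wmpeq4}, and uniqueness of $\mc{A}_{\mc{M},\mc{N}}^{\dg}$ finishes the argument. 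Note that this paper states the theorem without proof (it is imported as Theorem 4 of \cite{behera2020}), and your direct check of the defining equations together with uniqueness, existence of $\tilde{\mc{A}}^{\dg}$, and the Hermitian square roots of $\mc{M},\mc{N}$ is essentially the standard argument used in that source, so there is nothing further to add.
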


\noindent This article aims to introduce the notions of WSVD of an arbitrary-order tensor, weighted normal tensor, and weighted tensor norm and to establish their various properties. Some of these are utilized to investigate a few properties of the numerical range for the weighted Moore-Penrose inverse of an even-order square tensor.
The rest of this article is structured as follows to accomplish our objectives. In Section \ref{sec:preliminaries}, we recall some preliminaries. Then, we provide the WSVD and some of its applications in Section \ref{sec:WSVD}. Section \ref{sec:wnormaltnsr} defines the weighted normal tensor and discusses its several features. In Section \ref{sec:wtnsrnorm}, we talk about the weighted tensor norm. Finally, we utilize all these notions to collect some properties of the numerical range, which examine different relations between the numerical range of a tensor and its weighted Moore-Penrose inverse in Section \ref{sec:nrwmpi}.

\section{Preliminaries}\label{sec:preliminaries}
For two tensors $\mc{X},\mc{Y}\in\mb{C}^{I_{1\ldots N}}$, an inner product $\left\langle\mc{X},\mc{Y}\right\rangle$ is defined as $\left\langle\mc{X},\mc{Y}\right\rangle=\mc{Y}^{H}\n\mc{X}$ and a norm induced by this inner product as $\|\mc{X}\|=\left\langle\mc{X},\mc{X}\right\rangle^{1/2}$. A tensor $\mc{X}\in\mb{C}^{I_{1\ldots N}}$ is called a {\it unit tensor} if $\|\mc{X}\|=1$. First, we recall the definition of the numerical range and some results from \cite{nirmal}.
             \begin{definition}[Definition 2.1, \cite{nirmal}]\label{def:nrtn}\leavevmode\\
Let $\mc{A}\in \mb{C}^{I_{1\ldots N}\times I_{1\ldots N}}$. Then, the numerical range of $\mc{A}$ is denoted by $W(\mc{A})$
and defined by
          \begin{equation}\label{nrtnsrdfn}
          W(\mc{A})
             =\{\left\langle \mc{A}\n\mc{X},\mc{X}\right\rangle:\;\mc{X}\text{ is a unit tensor in }\mb{C}^{I_{1\ldots N}}\}.
        \end{equation}
\end{definition}
\noindent With some elementary calculations, it can be shown that
           \begin{equation}\label{eqn:nraltdfn}
           W(\mc{A})=\left\{\frac{\left\langle \mc{A}\n\mc{X},\mc{X}\right\rangle}{\|\mc{X}\|^{2}}:\;\mc{O}\neq\mc{X}\in\mb{C}^{I_{1\ldots N}}\right\},
\end{equation} where $\mc{O}$ is the zero
tensor having all the entries are zero.
\noindent
The numerical radius for the same tensor is defined as:
           \begin{equation}\label{def:nrd tnsr}
              w(\mc{A})=\textnormal{max}\{|z|:\;z\in  W(\mc{A})\}.
\end{equation}
 Note that, in the above Definition \ref{def:nrtn} when $N=1$, it coincides with the numerical range of a matrix defined in \eqref{eq:nrmtrx}.
          \begin{theorem}[Theorem 5.1, \cite{nirmal}]\label{thm:normaltn}\leavevmode\\
Let $\mc{A}\in \mb{C}^{I_{1\ldots N}\times I_{1\ldots N}}$. Then, $\mc{A}$ is normal (resp. Hermitian) if and only if $\mc{A}^{\dg}$ is normal (resp. Hermitian). 
\end{theorem}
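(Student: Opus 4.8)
The plan is to deduce the theorem from two elementary properties of the Moore--Penrose inverse, each a direct consequence of the four defining equations for $\mc A^{\dg}$ from Definition~\ref{defwmpi} taken with $\mc M=\mc N=\mc I$ (so that $\mc A^{\dg}$ satisfies $\mc A\n\mc A^{\dg}\n\mc A=\mc A$, $\mc A^{\dg}\n\mc A\n\mc A^{\dg}=\mc A^{\dg}$, $(\mc A\n\mc A^{\dg})^{H}=\mc A\n\mc A^{\dg}$, $(\mc A^{\dg}\n\mc A)^{H}=\mc A^{\dg}\n\mc A$). The first property is $(\mc A^{\dg})^{\dg}=\mc A$, which holds because these four equations are symmetric under the interchange $\mc A\leftrightarrow\mc A^{\dg}$. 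The second is $(\mc A^{H})^{\dg}=(\mc A^{\dg})^{H}$: one checks that $\mc Y:=(\mc A^{\dg})^{H}$ fulfils the four equations relative to $\mc A^{H}$ — each being obtained by taking conjugate transposes in the four equations for $\mc A$ — and the claim then follows from uniqueness of the Moore--Penrose inverse. Since the asserted equivalence is unchanged when $\mc A$ is replaced by $\mc A^{\dg}$ (using $(\mc A^{\dg})^{\dg}=\mc A$), it suffices to prove only the ``only if'' direction. The Hermitian case is then immediate: if $\mc A=\mc A^{H}$ then $\mc A^{\dg}=(\mc A^{H})^{\dg}=(\mc A^{\dg})^{H}$, so $\mc A^{\dg}$ is Hermitian.

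For the normal case I would invoke the singular value decomposition via the Einstein product, writing $\mc A=\mc U\n\mc D\n\mc V^{H}$ with $\mc U,\mc V\in\mb C^{I_{1\ldots N}\times I_{1\ldots N}}$ unitary and $\mc D\in\mb C^{I_{1\ldots N}\times I_{1\ldots N}}$ a nonnegative diagonal tensor; then $\mc A^{H}=\mc V\n\mc D\n\mc U^{H}$ and $\mc A^{\dg}=\mc V\n\mc D^{\dg}\n\mc U^{H}$, both verified directly against the Penrose equations. Hence $\mc A\n\mc A^{H}=\mc U\n\mc D\n\mc D\n\mc U^{H}$ and $\mc A^{H}\n\mc A=\mc V\n\mc D\n\mc D\n\mc V^{H}$, so normality of $\mc A$ forces $\mc U\n\mc D\n\mc D\n\mc U^{H}=\mc V\n\mc D\n\mc D\n\mc V^{H}$. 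Applying $(\cdot)^{\dg}$ to this identity, using the identity $(\mc W\n\mc B\n\mc W^{H})^{\dg}=\mc W\n\mc B^{\dg}\n\mc W^{H}$ for unitary $\mc W$ (again a Penrose-equation check) together with $(\mc D\n\mc D)^{\dg}=\mc D^{\dg}\n\mc D^{\dg}$, yields $\mc U\n\mc D^{\dg}\n\mc D^{\dg}\n\mc U^{H}=\mc V\n\mc D^{\dg}\n\mc D^{\dg}\n\mc V^{H}$. A short computation, using $\mc U^{H}\n\mc U=\mc I=\mc V^{H}\n\mc V$ and $(\mc D^{\dg})^{H}=\mc D^{\dg}$, then gives $(\mc A^{\dg})^{H}\n\mc A^{\dg}=\mc U\n\mc D^{\dg}\n\mc D^{\dg}\n\mc U^{H}$ and $\mc A^{\dg}\n(\mc A^{\dg})^{H}=\mc V\n\mc D^{\dg}\n\mc D^{\dg}\n\mc V^{H}$; the two right-hand sides coincide, so $\mc A^{\dg}$ is normal.

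I expect the main obstacle to be the normal case: it requires having the tensor SVD in hand and knowing how the Moore--Penrose inverse behaves under unitary conjugation, and the one genuinely substantive step is passing from $\mc A\n\mc A^{H}=\mc A^{H}\n\mc A$ to a usable relation between the left and right singular tensors $\mc U$ and $\mc V$, which is precisely what applying $(\cdot)^{\dg}$ to $\mc U\n\mc D\n\mc D\n\mc U^{H}=\mc V\n\mc D\n\mc D\n\mc V^{H}$ accomplishes. As alternatives, the theorem also follows from the corresponding classical matrix result through the standard algebra isomorphism between $\mb C^{I_{1\ldots N}\times I_{1\ldots N}}$ (with $\n$, $(\cdot)^{H}$ and $(\cdot)^{\dg}$) and $\mb C^{m\times m}$, $m=I_{1}\cdots I_{N}$, or it can be proved purely equationally using the identities $\mc A^{\dg}=\mc A^{H}\n(\mc A\n\mc A^{H})^{\dg}=(\mc A^{H}\n\mc A)^{\dg}\n\mc A^{H}$ together with the implication $\mc B\n\mc B^{H}=\mc O\Rightarrow\mc B=\mc O$.
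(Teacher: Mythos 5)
Your argument is correct: the Hermitian case follows at once from $(\mc{A}^{H})^{\dg}=(\mc{A}^{\dg})^{H}$, the reduction to the ``only if'' direction via $(\mc{A}^{\dg})^{\dg}=\mc{A}$ is legitimate (both identities follow from the four Penrose equations plus uniqueness, which holds in the Einstein-product setting), and the normal case goes through as you describe, since applying the Moore--Penrose inverse to the identity $\mc{U}\n\mc{D}\n\mc{D}\n\mc{U}^{H}=\mc{V}\n\mc{D}\n\mc{D}\n\mc{V}^{H}$ and comparing with $(\mc{A}^{\dg})^{H}\n\mc{A}^{\dg}$ and $\mc{A}^{\dg}\n(\mc{A}^{\dg})^{H}$ is a sound way to extract the needed relation between $\mc{U}$ and $\mc{V}$. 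Note, however, that this paper does not prove the statement at all: it is recalled verbatim as Theorem~5.1 of \cite{nirmal}, so there is no in-paper proof to compare against. Of your two routes, the one most consistent with the present paper's own machinery is the alternative you mention only in passing: reshape via Lemma~\ref{lem:rsh EP prop} (the map $\textnormal{rsh}$ is an algebra isomorphism compatible with $\n$, $(\cdot)^{H}$ and hence with $(\cdot)^{\dg}$), apply the classical matrix fact that $A$ is normal (resp.\ Hermitian) iff $A^{\dg}$ is, and pull back --- this is exactly how the paper transfers matrix results (e.g.\ in Theorem~\ref{thm:WSVD}), whereas your primary SVD-based argument requires having the Einstein-product SVD and the unitary-conjugation and diagonal identities for $(\cdot)^{\dg}$ in hand, each of which is true but needs its own Penrose-equation verification.
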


\noindent We now recall the definition of the weighted conjugate transpose of a tensor proposed by Behera {\it et al.} \cite{behera2020} 
             \begin{definition}[Definition 9, \cite{behera2020}]\label{defwct}\leavevmode\\
             Let $\mc{A} \in \mb{C}^{I_{1 \ldots M} \times J_{1 \ldots N}}$. If $\mc{M} \in \mb{C}^{I_{1 \ldots M} \times I_{1 \ldots M}}$ and $\mc{N} \in \mb{C}^{J_{1 \ldots N} \times J_{1 \ldots N}}$ are two  Hermitian positive definite tensors, then the tensor 
          \begin{equation}\label{eqnwct}
         \mc{A}_{\mc{M}\mc{N}}^{\#}= \mc{N}^{-1}\n \mc{A}^H\m\mc{M}
\end{equation}
is called the weighted conjugate transpose of $\mc{A}$.
\end{definition}
The following definitions of tensor reshaping are derived from \cite{stanimirovic2020}.
In 2020, Stanimirovi\'c {\it et al.} \cite{stanimirovic2020}  defined a bijective map `rsh' from the tensor space ${\mathbb{C}}^{M_{1\cdots m}\times N_{1\cdots n}}$ into the matrix space ${\mathbb{C}}^{(M_{1}\cdot M_{2}\cdot \cdots \cdot M_{m})\times (N_{1}\cdot N_{2}\cdot \cdots\cdot N_{n})}$ as follows:
        \begin{definition}[Definition 3.1, \cite{stanimirovic2020}]\label{def:reshape}\leavevmode\\
The reshaping operation transforms an arbitrary tensor $\mc{A}\in \mb{C}^{M_{1 \ldots m}\times N_{1\ldots n}}$ into the matrix $A\in \mb{C}^{\bf{m}\times \bf{n}}$ using the \it{Matlab} function \textnormal{reshape} as follows:  
$$\textnormal{rsh}(\mc{A})=A=reshape(\mc{A},\bf{m}, \bf{n}),$$ where ${\bf{m}}=M_1\cdot M_2\cdot\cdots\cdot M_m$ and ${\bf{n}}=N_1\cdot N_2\cdot\cdots\cdot N_n.$ 
The inverse reshaping is the mapping defined by 
$$ \textnormal{rsh}^{-1}(A)=\mc{A}=reshape(A, M_1,\ldots,M_m,N_1,\ldots,N_n).$$
\end{definition}
\noindent Further, rshrank($\mc{A}$)=rank(rsh($A$)), is defined as the {\it{rank}} of the tensor $\mc{A}$, in the same article \cite{stanimirovic2020}.
For the Einstein product of two tensors, reshape map satisfies the following property given as Lemma 3.1 in \cite{stanimirovic2020}.
                  \begin{lemma}[Lemma 3.1, \cite{stanimirovic2020}]\label{lem:rsh EP prop}\leavevmode\\
Let $\mc{A}\in \mb{C}^{M_{1 \ldots m}\times N_{1\ldots n}}$ and $\mc{B}\in \mb{C}^{N_{1\ldots n}\times P_{1 \ldots p}}$ be given tensors. Then, $$\textnormal{rsh}(\mc{A}\n\mc{B})=\textnormal{rsh}(\mc{A})\textnormal{rsh}(\mc{B})=AB,$$ where $\textnormal{rsh}(\mc{A})=A\in \mb{C}^{\bf{m}\times \bf{n}}$, $\textnormal{rsh}(\mc{B})=B\in \mb{C}^{\bf{n}\times \bf{p}}$, ${\bf{m}}=M_1\cdot M_2\cdot\cdots\cdot M_m$, ${\bf{n}}=N_1\cdot N_2\cdot\cdots\cdot N_n$, and ${\bf{p}}=P_1\cdot P_2\cdot\cdots\cdot P_p.$
\end{lemma}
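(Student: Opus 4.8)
The plan is to verify the identity entrywise, unravelling both sides according to the column-major (linear-indexing) ordering that the Matlab \texttt{reshape} command implements. First I would fix notation for the flattening bijections. For the mode sizes $M_1,\ldots,M_m$ let $\phi\colon\prod_{l=1}^{m}\{1,\ldots,M_l\}\to\{1,\ldots,\mathbf{m}\}$ be the bijection
$$\phi(i_1,\ldots,i_m)=i_1+\sum_{l=2}^{m}(i_l-1)\prod_{s=1}^{l-1}M_s,$$
and define $\psi$ on the tuples $(k_1,\ldots,k_n)$ with ranges $N_1,\ldots,N_n$ and $\chi$ on the tuples $(j_1,\ldots,j_p)$ with ranges $P_1,\ldots,P_p$ by the analogous formulas. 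By Definition \ref{def:reshape}, $A=\textnormal{rsh}(\mc{A})$ is the $\mathbf{m}\times\mathbf{n}$ matrix with $A_{\phi(i_1,\ldots,i_m),\,\psi(k_1,\ldots,k_n)}=a_{i_1\ldots i_m k_1\ldots k_n}$, and $B=\textnormal{rsh}(\mc{B})$ is the $\mathbf{n}\times\mathbf{p}$ matrix with $B_{\psi(k_1,\ldots,k_n),\,\chi(j_1,\ldots,j_p)}=b_{k_1\ldots k_n j_1\ldots j_p}$; the point is that the \emph{same} bijection $\psi$ governs the last $n$ modes of $\mc{A}$ and the first $n$ modes of $\mc{B}$, since both blocks have identical mode sizes $N_1,\ldots,N_n$ and \texttt{reshape} uses one fixed ordering convention.

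Next I would compute the $(\phi(i),\chi(j))$-entry of the matrix product $AB$. Because $\psi$ is a bijection onto $\{1,\ldots,\mathbf{n}\}$, summing a function of $\psi(k_1,\ldots,k_n)$ over all tuples $(k_1,\ldots,k_n)$ coincides with summing over the single index $K\in\{1,\ldots,\mathbf{n}\}$, so
$$(AB)_{\phi(i),\chi(j)}=\sum_{K=1}^{\mathbf{n}}A_{\phi(i),K}\,B_{K,\chi(j)}=\sum_{k_1,\ldots,k_n}a_{i_1\ldots i_m k_1\ldots k_n}\,b_{k_1\ldots k_n j_1\ldots j_p}.$$
By the definition of the Einstein product this last sum is exactly $(\mc{A}\n\mc{B})_{i_1\ldots i_m j_1\ldots j_p}$, which by Definition \ref{def:reshape} is the $(\phi(i),\chi(j))$-entry of $\textnormal{rsh}(\mc{A}\n\mc{B})$. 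As $(i_1,\ldots,i_m)$ and $(j_1,\ldots,j_p)$ run over all admissible tuples, the pair $(\phi(i),\chi(j))$ runs over all of $\{1,\ldots,\mathbf{m}\}\times\{1,\ldots,\mathbf{p}\}$; hence $\textnormal{rsh}(\mc{A}\n\mc{B})$ and $AB$ agree in every entry and are equal, with the claimed size $\mathbf{m}\times\mathbf{p}$.

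The only delicate point — the step I would be most careful with — is the bookkeeping of the flattening maps: one must confirm that \texttt{reshape} applied to $\mc{A}$ (collapsing its last $n$ modes) and \texttt{reshape} applied to $\mc{B}$ (collapsing its first $n$ modes) induce the \emph{same} linear index on the shared $N_1\times\cdots\times N_n$ block, so that the contraction over $k_1,\ldots,k_n$ in the Einstein product matches the contraction over the middle index $K$ in the matrix product. Once the column-major convention is pinned down consistently for $\phi$, $\psi$, $\chi$, the rest is a direct substitution and requires no further structural idea.
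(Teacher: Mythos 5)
Your argument is correct. Note that the paper itself offers no proof of this lemma at all — it is simply recalled from the cited work of Stanimirovi\'c \emph{et al.} — so there is nothing internal to compare against; your entrywise verification, including the key observation that the same column-major flattening bijection $\psi$ indexes the contracted $N_1\times\cdots\times N_n$ block of both $\mc{A}$ and $\mc{B}$ so that the Einstein contraction matches the middle index of the matrix product, is exactly the standard argument establishing the result.
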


\section{Weighted singular value decomposition}\label{sec:WSVD} 
This section contains some of the main results of this article. We prove that any tensor can be decomposed into the tensor Einstein product of three special tensors. We call it the WSVD of the given tensor as it generalizes the notion of the WSVD of a matrix \cite{loan1976}. After that, we derive a formula to compute the weighted Moore-Penrose inverse of a given arbitrary-order tensor. For applications of the WSVD of a matrix, we refer \cite{jozi2018, kyrchei2017, naeem2021, sergienko2015} and references therein. In particular, the WSVD is widely used in solving weighted least squares solutions \cite{loan1976, wang2004}.
We now propose the WSVD of an arbitrary-order tensor using the reshaping operation that generalizes Theorem 3.17 of \cite{brazell2013}, Lemma 3.1 of \cite{sun2016}, Theorem 3.2 of \cite{liang2019}, and Lemma 2 of \cite{behera2020}. 
           \begin{theorem}\label{thm:WSVD}
Let $\mc{A}\in \mb{C}^{I_{1\ldots M}\times J_{1\ldots N}}$ with $\textnormal{rshrank}(\mathcal{A})=r$, and $\mc{M}\in \mb{C}^{I_{1\ldots M}\times I_{1\ldots M}}$, $\mc{N}\in \mb{C}^{J_{1\ldots N}\times J_{1\ldots N}}$ be two Hermitian positive definite tensors. Then, there exist tensors $\mc{U}\in\mb{C}^{I_{1\ldots M}\times I_{1\ldots M}} $ and $\mc{V}\in\mb{C}^{J_{1\ldots N}\times J_{1\ldots N}}$ satisfying $\mc{U}^H\m\mc{M}\m\mc{U}=\mc{I}_1$ and $\mc{V}^H\n \mc{N}^{-1} \n \mc{V}=\mc{I}_2$, where $\mc{I}_1\in\mb{C}^{I_{1\ldots M}\times I_{1\ldots M}} $ and $\mc{I}_2\in\mb{C}^{J_{1\ldots N}\times J_{1\ldots N}}$ are the identity tensors, such that 
           \begin{equation}\label{eqn:tnsrWSVD}
         \mc{A}=\mc{U}\m \mc{S}\n \mc{V}^H 
\end{equation}
in which the tensor $\mc{S}=(\mc{S}_{i_{1}\hdots i_{M} j_{1}\hdots j_{N}})\in \mb{R}^{I_{1\ldots M}\times J_{1\ldots N}}$ is defined by 
          \begin{equation}\label{eqn:sigma WSVD}
          \mc{S}_{i_{1}\cdots i_{M}j_{1}\cdots j_{N}}=\begin{cases}
          \mu_{IJ}>0,&\text{if } I=J\in\{1,~2,~\ldots,~r\},\\
         0,&\text{otherwise},
         \end{cases}
 \end{equation}
 where $I=\big[i_{1}+\displaystyle\sum_{s=2}^{M}(i_{s}-1)\prod_{u=1}^{s-1}I_{u}\big]$ and $J=\big[j_{1}+\displaystyle\sum_{t=2}^{N}(j_{t}-1)\prod_{v=1}^{t-1}J_{v}\big]$.
 
 \end{theorem}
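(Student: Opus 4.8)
The plan is to transport the statement to the matrix setting through the reshaping bijection of Definition \ref{def:reshape}, apply the classical weighted singular value decomposition of a matrix \cite{loan1976}, and then pull the factorization back to tensors. First I would put $A=\textnormal{rsh}(\mc{A})\in\mb{C}^{\mathbf{m}\times\mathbf{n}}$, $M=\textnormal{rsh}(\mc{M})\in\mb{C}^{\mathbf{m}\times\mathbf{m}}$ and $N=\textnormal{rsh}(\mc{N})\in\mb{C}^{\mathbf{n}\times\mathbf{n}}$, where $\mathbf{m}=I_{1}\cdots I_{M}$ and $\mathbf{n}=J_{1}\cdots J_{N}$. Since rsh is a bijection that commutes with conjugate transposition and, by Lemma \ref{lem:rsh EP prop}, turns every Einstein product into the corresponding matrix product (the analogous identity holding for $\m$), one checks that $M$ and $N$ are Hermitian positive definite matrices, that $\textnormal{rsh}(\mc{N}^{-1})=N^{-1}$, that rsh carries the identity tensors $\mc{I}_{1},\mc{I}_{2}$ to the identity matrices $I_{\mathbf{m}},I_{\mathbf{n}}$, and that $\textnormal{rank}(A)=\textnormal{rshrank}(\mc{A})=r$.

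Next I would invoke the matrix WSVD: there exist $U\in\mb{C}^{\mathbf{m}\times\mathbf{m}}$ and $V\in\mb{C}^{\mathbf{n}\times\mathbf{n}}$ with $U^{H}MU=I_{\mathbf{m}}$ and $V^{H}N^{-1}V=I_{\mathbf{n}}$ such that $A=USV^{H}$, where $S\in\mb{R}^{\mathbf{m}\times\mathbf{n}}$ has $(I,J)$-entry $\mu_{IJ}>0$ when $I=J\in\{1,\ldots,r\}$ and $0$ otherwise. (One way to produce these factors is to take the ordinary SVD $M^{1/2}AN^{-1/2}=\tilde{U}S\tilde{V}^{H}$ and set $U=M^{-1/2}\tilde{U}$, $V=N^{1/2}\tilde{V}$, which is exactly the matrix analogue of Theorem \ref{thmwmpi}.) I would then define $\mc{U}=\textnormal{rsh}^{-1}(U)$, $\mc{V}=\textnormal{rsh}^{-1}(V)$ and $\mc{S}=\textnormal{rsh}^{-1}(S)$. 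Applying $\textnormal{rsh}^{-1}$ to the three matrix identities and using Lemma \ref{lem:rsh EP prop} in the reverse direction, together with the facts recorded above, yields precisely $\mc{U}^{H}\m\mc{M}\m\mc{U}=\mc{I}_{1}$, $\mc{V}^{H}\n\mc{N}^{-1}\n\mc{V}=\mc{I}_{2}$ and $\mc{A}=\mc{U}\m\mc{S}\n\mc{V}^{H}$; moreover $\mc{S}$ is real because $S$ is.

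It remains to match the entries of $\mc{S}$ with \eqref{eqn:sigma WSVD}. The Matlab \texttt{reshape} used in Definition \ref{def:reshape} enumerates multi-indices in column-major order, so the multi-index $(i_{1},\ldots,i_{M})$ of $\mc{S}$ is sent to the single index $I=i_{1}+\sum_{s=2}^{M}(i_{s}-1)\prod_{u=1}^{s-1}I_{u}$ of $S$, and likewise $(j_{1},\ldots,j_{N})\mapsto J=j_{1}+\sum_{t=2}^{N}(j_{t}-1)\prod_{v=1}^{t-1}J_{v}$; hence $\mc{S}_{i_{1}\cdots i_{M}j_{1}\cdots j_{N}}=S_{IJ}$, which is the asserted formula. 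The only genuine content is the matrix WSVD itself — everything else is bookkeeping along the reshaping isomorphism — and the one point requiring care, which I expect to be the main (minor) obstacle, is confirming that rsh really does respect conjugate transposition and inversion, so that positive definiteness of the weights and the weighted orthogonality relations survive the translation.
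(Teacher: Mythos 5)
Your proposal is correct and follows essentially the same route as the paper: reshape $\mc{A}$, $\mc{M}$, $\mc{N}$ to matrices, apply the classical matrix WSVD of \cite{loan1976}, and pull the factorization and the weight relations back through $\textnormal{rsh}^{-1}$ using Lemma \ref{lem:rsh EP prop}, with the entries of $\mc{S}$ identified via the column-major index map. Your additional remarks on rsh respecting conjugate transposition and inversion only make explicit bookkeeping that the paper leaves implicit.
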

           \begin{proof}
Let $A=\textnormal{rsh}(\mc{A})\in  \mb{C}^{m\times n} $, $M=\textnormal{rsh}(\mc{M})\in \mb{C}^{m\times m}$, and $N=\textnormal{rsh}(\mc{M}) \in \mb{C}^{n\times n}$ be the reshapings of the tensors $\mc{A}$, $\mc{M}$, and $\mc{N}$, respectively, where
          \begin{equation}\label{eqn:product of modes}
           m=I_{1}\cdot I_{2}\cdot \cdots \cdot I_{M} ~\textnormal{and}~ n=J_{1}\cdot J_{2}\cdot \cdots \cdot J_{N}. 
\end{equation}
The WSVD of the matrix $A$ with respect to the weights $M$ and $N$ is
            \begin{equation}\label{eqn:matWSVD}
            A=USV^*,
\end{equation}
where $U\in \mb{C}^{m\times m}$ and $V\in \mb{C}^{n\times n}$ following $U^*MU=I_1$ and $V^*N^{-1}V=I_2$, $I_1\in \mb{C}^{m\times m}$, $I_2\in \mb{C}^{n\times n}$ are the identity matrices, and $S=\textnormal{diag}(\mu_1, \mu_2,\hdots,\mu_r,0,\hdots,0)\in \mb{R}^{m\times n}$, $\mu_1 \geq \mu_2 \geq...\geq \mu_r >0$ are the nonzero $(M,N)$ singular values of $A$. 
Since $\textnormal{rsh}$ is a bijection, taking the inverse map $\textnormal{rsh}^{-1}$ on both sides of  \eqref{eqn:matWSVD}, we obtain 
            \begin{align*}
            \textnormal{rsh}^{-1}(A)&=\textnormal{rsh}^{-1}(US V^*)\\
             &=\textnormal{rsh}^{-1}(U)\m\textnormal{rsh}^{-1}(S)\n\textnormal{rsh}^{-1}(V^*)\\
             &= \textnormal{rsh}^{-1}(U)\m\textnormal{rsh}^{-1}(S)\n[\textnormal{rsh}^{-1}(V)]^H,
\end{align*}
which implies that
             \begin{align*}
             \mc{A}&= \mc{U}\m \mc{S}\n \mc{V}^H.
\end{align*}
Now, we have  $$U^*MU=I_1~\textnormal{and}~V^*N^{-1}V=I_2.$$ Applying the reverse map $\textnormal{rsh}^{-1}$ on both sides in the last two equalities, we get $$\mc{U}^H\m\mc{M}\m\mc{U}=\mc{I}_1 ~\textnormal{and} ~\mc{V}^H\n \mc{N}^{-1} \n \mc{V}=\mc{I}_2.$$ 
From $\mc{S}=\textnormal{rsh}^{-1}(S)$,  
\begin{equation*}
          \mc{S}_{i_{1}\cdots i_{M}j_{1}\cdots j_{N}}=\begin{cases}
          \mu_{IJ}>0,&\text{if } I=J\in\{1,~2,~\ldots,~r\},\\
         0,&\text{otherwise}.
         \end{cases}
 \end{equation*} 
 This completes the proof.
\end{proof}
We call \eqref{eqn:tnsrWSVD} as the WSVD of the tensor $\mc{A}$ and ${\mu_{IJ}}^,s$ as the $(\mc{M},\mc{N})$ singular values of $\mc{A}.$
 We next present an algorithm for computing the WSVD.\\
            \begin{algorithm}[h!]
             \caption{\texttt{Computing WSVD of a tensor} }\label{alg1}
             \begin{algorithmic}[1]
\Require { \texttt{Positive integers $M, N, I_1,...,I_M, J_1,...,J_N, m,n$ such that $m$ and $n$ satisfy \eqref{eqn:product of modes}. $A\in \mb{C}^{m\times n}$, and $M\in \mb{C}^{m\times m}$, $N\in \mb{C}^{n\times n}$ two Hermitian positive definite matrices. }}
\State \texttt{Compute the WSVD of $A$, $$A=USV^*,$$
 where $U\in \mb{C}^{m\times m}$ and $V\in\mb{C}^{n\times n}$ and $S\in\mb{C}^{m\times n}$ is a diagonal matrix with $(M,N)$ singular values of $A$ on the main diagonal.}
\State \texttt{Perform the reshaping operations 
$$\textnormal{rsh}^{-1}(U)= \mc{U}\in \mb{C}^{I_{1\ldots M}\times I_{1\ldots M}},~\textnormal{rsh}^{-1}(V^*)= \mc{V}^H\in \mb{C}^{J_{1\ldots N}\times J_{1\ldots N}},~\textnormal{rsh}^{-1}(S)= \mc{S}\in \mb{C}^{I_{1\ldots M}\times J_{1\ldots N}}.$$}
\State \texttt{Compute the output
$$\mc{A}= \mc{U}\m \mc{S}\n \mc{V}^H.$$}
\end{algorithmic}
\end{algorithm}
\newpage
The following example demonstrates the above algorithm.  
             \begin{example}\label{exm:WSVD}
              Consider the tensor $\mc{A}\in  \mb{C}^{I_1\times I_2\times J_1}=\mb{C}^{2\times2\times2}$ and the two weights $\mc{M}\in  \mb{C}^{I_1\times I_2\times I_1\times I_2}=\mb{C}^{2\times2\times2\times 2}$, $\mc{N}\in  \mb{C}^{J_1\times J_1}=\mb{C}^{2\times2}$ such that

               \begin{center}
               \begin{tabular}{c c  | c c }
\hline
  \multicolumn{2}{c}{$\mc{A}(:,:,1)$} & \multicolumn{2}{c}{$\mc{A}(:,:,2)$} \\
\hline
1 & 0 & 0 & 1 \\
0 & 0 & 0 & 0  \\
\hline
\end{tabular},
\end{center}
               \begin{center}
               \begin{tabular}{cc|cc|cc|cc}
\hline
\multicolumn{2}{c}{$\mc{M}(:,:,1,1)$} & \multicolumn{2}{c}{$\mc{M}(:,:,2,1)$} &
\multicolumn{2}{c}{$\mc{M}(:,:,1,2)$} &
\multicolumn{2}{c}{$\mc{M}(:,:,2,2)$} \\
\hline
    1 & 0 & 0 & 0 & 0 & 1 & 0 & 0 \\
    0 & 0 & 1 & 0 & 0 & 0 & 0 & 4 \\
\hline 
\end{tabular},
\end{center}
and
                 \begin{center}
                 \begin{tabular}{c c}
 \hline
   \multicolumn{2}{c}{$\mathcal{N}(:,:)$} \\
 \hline
 4 & 0  \\
 0 & 1  \\
 \hline
\end{tabular}.
\end{center}
On reshaping these tensors $\mc{A}, \mc{M},$ and $\mc{N}$, we obtain the matrices $A, M,$ and $N$, respectively, as follows:
                 \begin{center}
 $A=\begin{bmatrix}1 & 0\\
0 & 0 \\
0 & 1\\
0 & 0
\end{bmatrix},$
$M=\begin{bmatrix}1&0 &0&0\\
0&1 &0&0\\
0&0&1&0\\
0&0&0&4\end{bmatrix},$ and 
$N=\begin{bmatrix}4 & 0\\
0 & 1
\end{bmatrix}.$
\end{center} 
Now, computing the WSVD of the matrix $A$ with respect to the weights $M$ and $N$, we get
$A=USV^*$, where  
                  \begin{center}
$U=\begin{bmatrix}
0&1&0&0\\
0&0&1&0\\
1&0&0&0\\
0&0&0&\frac{1}{2}\end{bmatrix},$
$S=\begin{bmatrix}1 & 0\\
0 & \frac{1}{2} \\
0 & 0\\
0 & 0
\end{bmatrix},$ and 
$V=\begin{bmatrix}
0 & 2\\
1 & 0
\end{bmatrix}.$ 
\end{center} 
On applying the inverse reshape function on the matrices $U,S,$ and $V$, we get the tensors $\mc{U}\in\mb{C}^{I_1\times I_2\times I_1\times I_2}=\mb{C}^{2\times2\times2\times 2}, \mc{S}\in  \mb{C}^{I_1\times I_2\times J_1}=\mb{C}^{2\times2\times2},$ and $\mc{V}\in  \mb{C}^{J_1\times J_1}=\mb{C}^{2\times2}$, respectively as 
                 \begin{center}
                 \begin{tabular}{cc|cc|cc|cc}
\hline
\multicolumn{2}{c}{$\mc{U}(:,:,1,1)$} & \multicolumn{2}{c}{$\mc{U}(:,:,2,1)$} &
\multicolumn{2}{c}{$\mc{U}(:,:,1,2)$} &
\multicolumn{2}{c}{$\mc{U}(:,:,2,2)$} \\
\hline
    0 & 1 & 1 & 0 & 0 & 0 & 0 & 0 \\
    0 & 0 & 0 & 0 & 1 & 0 & 0 & $\frac{1}{2}$ \\
\hline 
\end{tabular}, 
\end{center}
                  \begin{center}
                  \begin{tabular}{c c  | c c }
\hline
  \multicolumn{2}{c}{$\mc{S}(:,:,1)$} & \multicolumn{2}{c}{$\mc{S}(:,:,2)$} \\
\hline
1 & 0 & 0 & 0 \\
0 & 0 & $\frac{1}{2}$ & 0  \\
\hline
\end{tabular},
\end{center}
and
                \begin{center}
                \begin{tabular}{c c}
 \hline
   \multicolumn{2}{c}{$\mathcal{V}(:,:)$} \\
 \hline
 0 & 2  \\
 1 & 0  \\
 \hline
\end{tabular}.
\end{center}
Then, $\mc{U}\2\mc{S}$ becomes   
                 \begin{center}
                 \begin{tabular}{c c  | c c }
\hline
  \multicolumn{2}{c}{$\mc{U}\2\mc{S}(:,:,1)$} & \multicolumn{2}{c}{$\mc{U}\2\mc{S}(:,:,2)$} \\
\hline
0 & 1 & $\frac{1}{2}$  & 0 \\
0 & 0 & 0 & 0  \\
\hline
\end{tabular}.
\end{center} 
Therefore, $\mc{U}\2\mc{S}\1 \mc{V}^H$ is 
                  \begin{center}
                  \begin{tabular}{c c  | c c }
\hline
  \multicolumn{2}{c}{$\mc{U}\2\mc{S}\1 \mc{V}^H(:,:,1)$} & \multicolumn{2}{c}{$\mc{U}\2\mc{S}\1 \mc{V}^H(:,:,2)$} \\
\hline
1 & 0 & 0 & 1 \\
0 & 0 & 0 & 0  \\
\hline
\end{tabular}, 
\end{center}
which is same as the tensor $\mc{A},$ i.e., $\mc{A}=\mc{U}\2\mc{S}\1 \mc{V}^H.$ 
\end{example}
Next, we provide a result to compute the weighted Moore-Penrose inverse of a tensor $\mc{A}$ via the WSVD, $\mc{A}= \mc{U}\m \mc{S}\n \mc{V}^H$.
                  \begin{theorem}\label{thm:WMPI via WSVD}
Let $\mc{A}\in \mb{C}^{I_{1\ldots M}\times J_{1\ldots N}}$ with $\textnormal{rshrank}(\mathcal{A})=r$, and $\mc{M}\in \mb{C}^{I_{1\ldots M}\times I_{1\ldots M}}$, $\mc{N}\in \mb{C}^{J_{1\ldots N}\times J_{1\ldots N}}$ be two Hermitian positive definite tensors. If $\mc{A}= \mc{U}\m \mc{S}\n \mc{V}^H$ is the WSVD of the tensor $\mc{A}$, then 
                 \begin{equation}\label{eqn: WMPI via WSVD}
                \mc{A}_{\mc{M},\mc{N}}^{\dg}=\mc{N}^{-1}\n\mc{V}\n {\mc{S}}^{\dg}\m \mc{U}^H\m \mc{M},
\end{equation} where $\mc{S}^{\dg}=(\mc{S}^{\dg}_{j_{1}\hdots j_{N} i_{1}\hdots i_{M}})\in \mb{R}^{J_{1\ldots N}\times I_{1\ldots M}}$ is defined by 
               \begin{equation}\label{eqn: sigma dagger WSVD}
            \mc{S}^{\dg}_{j_{1}\cdots j_{N}i_{1}\cdots i_{M}}=\begin{cases}
            \mc{S}^{-1}_{i_1 \cdots i_Mj_1 \cdots j_N  },&\text{if }\mc{S}_{ i_1 \cdots i_M j_1 \cdots j_N} \neq 0,\\
             0,&\text{otherwise}.
             \end{cases}
\end{equation} 
\end{theorem}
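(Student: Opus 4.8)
The plan is to transport the statement to the matrix level through the reshaping operation, exactly as in the proof of Theorem \ref{thm:WSVD}, settle it there using the classical matrix weighted singular value decomposition, and then reshape back. Put $A=\textnormal{rsh}(\mc{A})$, $M=\textnormal{rsh}(\mc{M})$, $N=\textnormal{rsh}(\mc{N})$, $U=\textnormal{rsh}(\mc{U})$, $V=\textnormal{rsh}(\mc{V})$ and $S=\textnormal{rsh}(\mc{S})$; by Theorem \ref{thm:WSVD} and its proof, $A=USV^*$ is the matrix WSVD of $A$ relative to the weights $M$ and $N$, with $S=\textnormal{diag}(\mu_1,\ldots,\mu_r,0,\ldots,0)$, and $U^*MU=I_1$, $V^*N^{-1}V=I_2$.

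First I would record two bookkeeping facts about reshaping: it is multiplicative with respect to the Einstein product (this is Lemma \ref{lem:rsh EP prop}, applied in each instance with the appropriate number of contracted modes), and it intertwines the conjugate transpose of a tensor with that of a matrix, i.e.\ $\textnormal{rsh}(\mc{B}^H)=\textnormal{rsh}(\mc{B})^*$, which is immediate from the definitions of $\mc{B}^H$ and of the reshaping. Granting these, apply $\textnormal{rsh}$ to the four equations \eqref{wmpeq1}--\eqref{wmpeq4} defining $\mc{X}=\mc{A}_{\mc{M},\mc{N}}^{\dg}$, and write $X=\textnormal{rsh}(\mc{X})$: the system becomes $AXA=A$, $XAX=X$, $(MAX)^*=MAX$, $(NXA)^*=NXA$, which is precisely the defining system of the matrix weighted Moore-Penrose inverse of $A$ relative to $M$ and $N$. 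Since $M$ and $N$ are Hermitian positive definite, that matrix is unique, so $X=A_{M,N}^{\dg}$.

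Next I would invoke the classical formula (see \cite{loan1976}): if $A=USV^*$ is the WSVD with $U^*MU=I_1$ and $V^*N^{-1}V=I_2$, then $A_{M,N}^{\dg}=N^{-1}VS^{\dg}U^*M$, where $S^{\dg}$ is obtained from $S$ by inverting its nonzero diagonal entries. If a self-contained argument is preferred, this is checked in a couple of lines by substituting $N^{-1}VS^{\dg}U^*M$ into the four matrix equations and using $SS^{\dg}S=S$, $S^{\dg}SS^{\dg}=S^{\dg}$, the orthogonality relations $U^*MU=I_1$, $V^*N^{-1}V=I_2$ (equivalently $UU^*=M^{-1}$, $VV^*=N$), and the fact that $SS^{\dg}$ and $S^{\dg}S$ are real diagonal, hence Hermitian. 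Finally, apply $\textnormal{rsh}^{-1}$ to $X=N^{-1}VS^{\dg}U^*M$: using multiplicativity of $\textnormal{rsh}^{-1}$ together with $\textnormal{rsh}^{-1}(U^*)=\mc{U}^H$, $\textnormal{rsh}^{-1}(V^*)=\mc{V}^H$, $\textnormal{rsh}^{-1}(N^{-1})=\mc{N}^{-1}$ and $\textnormal{rsh}^{-1}(S^{\dg})=\mc{S}^{\dg}$, the last one being exactly \eqref{eqn: sigma dagger WSVD}, one obtains $\mc{A}_{\mc{M},\mc{N}}^{\dg}=\mc{N}^{-1}\n\mc{V}\n\mc{S}^{\dg}\m\mc{U}^H\m\mc{M}$, as claimed.

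The main obstacle is the reshaping bookkeeping: one must check that each product appearing after reshaping is a genuine matrix product of conformable matrices, i.e.\ that Lemma \ref{lem:rsh EP prop} applies in every instance with the correct number of contracted modes, and that $\textnormal{rsh}$ really commutes with $(\cdot)^H$ on the rectangular ``block'' tensors involved; once these are pinned down, the substance is the routine matrix computation above. As an alternative that sidesteps reshaping, one can verify the theorem directly: substitute $\mc{X}=\mc{N}^{-1}\n\mc{V}\n\mc{S}^{\dg}\m\mc{U}^H\m\mc{M}$ into \eqref{wmpeq1}--\eqref{wmpeq4}, collapse the inner factors using $\mc{V}^H\n\mc{N}^{-1}\n\mc{V}=\mc{I}_2$ and $\mc{U}^H\m\mc{M}\m\mc{U}=\mc{I}_1$ from Theorem \ref{thm:WSVD}, and use that $\mc{S}\n\mc{S}^{\dg}$ and $\mc{S}^{\dg}\m\mc{S}$ are real ``diagonal'' tensors satisfying $\mc{S}\n\mc{S}^{\dg}\m\mc{S}=\mc{S}$ and $\mc{S}^{\dg}\m\mc{S}\n\mc{S}^{\dg}=\mc{S}^{\dg}$; the two Hermitian conditions then follow because $\mc{M}\m\mc{U}\m(\mc{S}\n\mc{S}^{\dg})\m\mc{U}^H\m\mc{M}$ and $\mc{N}^{-1}\n\mc{V}\n(\mc{S}^{\dg}\m\mc{S})\n\mc{V}^H\n\mc{N}^{-1}$ are visibly self-adjoint.
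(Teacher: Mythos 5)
Your proposal is correct. Be aware, though, that the paper's own proof of Theorem \ref{thm:WMPI via WSVD} is just the one-line remark that it ``can be easily proved by Definition \ref{defwmpi}'', i.e.\ precisely the direct verification you sketch as your alternative: substitute $\mc{X}=\mc{N}^{-1}\n\mc{V}\n\mc{S}^{\dg}\m\mc{U}^H\m\mc{M}$ into \eqref{wmpeq1}--\eqref{wmpeq4}, collapse the inner factors with $\mc{V}^H\n\mc{N}^{-1}\n\mc{V}=\mc{I}_2$ and $\mc{U}^H\m\mc{M}\m\mc{U}=\mc{I}_1$, and use $\mc{S}\n\mc{S}^{\dg}\m\mc{S}=\mc{S}$, $\mc{S}^{\dg}\m\mc{S}\n\mc{S}^{\dg}=\mc{S}^{\dg}$ together with the Hermitian character of $\mc{S}\n\mc{S}^{\dg}$ and $\mc{S}^{\dg}\m\mc{S}$. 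Your primary route --- reshaping the four defining equations to the matrix level via Lemma \ref{lem:rsh EP prop}, invoking uniqueness of the matrix weighted Moore--Penrose inverse and the classical formula $A_{M,N}^{\dg}=N^{-1}VS^{\dg}U^{*}M$ of \cite{loan1976}, then applying $\textnormal{rsh}^{-1}$ --- is a genuinely different argument: it reduces the theorem to a known matrix result and mirrors the computational scheme of Algorithm \ref{alg2}, at the cost of the reshaping bookkeeping (multiplicativity and compatibility with $(\cdot)^{H}$), which you correctly flag, and of an appeal to uniqueness that the direct verification does not need beyond the definition. One small phrasing point in your direct check of \eqref{wmpeq4}: the tensor that must be shown Hermitian is $\mc{N}\n\mc{X}\m\mc{A}=\mc{V}\n(\mc{S}^{\dg}\m\mc{S})\n\mc{V}^H$ rather than $\mc{N}^{-1}\n\mc{V}\n(\mc{S}^{\dg}\m\mc{S})\n\mc{V}^H\n\mc{N}^{-1}$; the two assertions are equivalent (congruence by the Hermitian invertible $\mc{N}$), so the argument stands, but stating the former directly is cleaner.
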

                \begin{proof}
It can be easily proved by Definition \ref{defwmpi}.
\end{proof}
 Here, we present an algorithm for computing the weighted Moore-Penrose inverse via the WSVD.
 \newpage
                \begin{algorithm}[h!]
\caption{\texttt{Computing weighted Moore-Penrose inverse of a tensor}}\label{alg2}
                \begin{algorithmic}[1]
\Require { \texttt{Positive integers $M, N, I_1,...,I_M, J_1,...,J_N, m,n$ such that $m$ and $n$ satisfy \eqref{eqn:product of modes}. $A\in \mb{C}^{m\times n}$, and $M\in \mb{C}^{m\times m}$, $N\in \mb{C}^{n\times n}$ two Hermitian positive definite matrices.}}
\State \texttt{Compute the WSVD of $A$, $$A=USV^*,$$
 where $U\in \mb{C}^{m\times m}$ and $V\in\mb{C}^{n\times n}$ and $S\in\mb{C}^{m\times n}$ is a diagonal matrix with $(M,N)$ singular values of $A$ on the main diagonal.} 
\State \texttt {Perform the reshaping operations 
$$\textnormal{rsh}^{-1}(U)= \mc{U}\in \mb{C}^{I_{1\ldots M}\times I_{1\ldots M}},~\textnormal{rsh}^{-1}(V^*)= \mc{V}^H\in \mb{C}^{J_{1\ldots N}\times J_{1\ldots N}},~\textnormal{rsh}^{-1}(S)= \mc{S}\in \mb{C}^{I_{1\ldots M}\times J_{1\ldots N}}.$$}
\State \texttt{Compute the output
$$\mc{A}_{\mc{M},\mc{N}}^{\dg}= \mc{N}^{-1}\n\mc{V}\n \mc{S}^{\dg}\m\mc{U}^H\m \mc{M}.$$}
\end{algorithmic}
\end{algorithm}
The next example verifies the above algorithm.  
                \begin{example}\label{exm:WMPI via WSVD}
Consider the same tensors $\mc{A}, \mc{M},$ and $\mc{N}$ as given in Example \ref{exm:WSVD}. By the same argument, we have the tensors $\mc{U}, \mc{S},$ and $\mc{V}.$ Using \eqref{eqn: sigma dagger WSVD}, we get $\mc{S}^{\dg}\in  \mb{C}^{J_1\times I_1\times I_2}=\mb{C}^{2\times2\times2}$ as 
               \begin{center}
               \begin{tabular}{c c  | c c }
\hline
  \multicolumn{2}{c}{$\mc{S}^{\dg}(:,:,1)$} & 
  \multicolumn{2}{c}{$\mc{S}^{\dg}(:,:,2)$} \\
\hline
1 & 0 & 0 & 0 \\
0 & 2 & 0 & 0  \\
\hline
\end{tabular}. 
\end{center}
The conjugate transpose $\mc{U}^H\in\mb{C}^{I_1\times I_2\times I_1\times I_2}=\mb{C}^{2\times2\times2\times 2}$ of the tensor $\mc{U}$ is
                \begin{center}
                \begin{tabular}{cc|cc|cc|cc}
\hline
\multicolumn{2}{c}{$\mc{U}^H(:,:,1,1)$} &
\multicolumn{2}{c}{$\mc{U}^H(:,:,2,1)$} &
\multicolumn{2}{c}{$\mc{U}^H(:,:,1,2)$} &
\multicolumn{2}{c}{$\mc{U}^H(:,:,2,2)$} \\
\hline
    0 & 0 & 0 & 1 & 1 & 0 & 0 & 0 \\
    1 & 0 & 0 & 0 & 0 & 0 & 0 & $\frac{1}{2}$ \\
\hline 
\end{tabular}. 
\end{center}
Then, we obtain the tensors $\mc{B}=\mc{N}^{-1}\1\mc{V}$, $\mc{C}=\mc{B}\1\mc{S}^{\dg}$, $\mc{D}=\mc{C}\2\mc{U}^H$, and $\mc{E}=\mc{D}\2\mc{M}$ as follows: 
                \begin{center}
                \begin{tabular}{c c}
 \hline
   \multicolumn{2}{c}{$\mathcal{B}(:,:)$} \\
 \hline
 0 & $\frac{1}{2}$  \\
 1 & 0  \\
 \hline
 \end{tabular},
\end{center}
                \begin{center}
                \begin{tabular}{c c  | c c }
\hline
  \multicolumn{2}{c}{$\mc{C}(:,:,1)$} & 
  \multicolumn{2}{c}{$\mc{C}(:,:,2)$} \\
\hline
0 & 1 & 0 & 0 \\
1 & 0 & 0 & 0  \\
\hline
\end{tabular}, 
\end{center}
                \begin{center}
                \begin{tabular}{c c  | c c }
\hline
  \multicolumn{2}{c}{$\mc{D}(:,:,1)$} & 
  \multicolumn{2}{c}{$\mc{D}(:,:,2)$} \\
\hline
1 & 0 & 0 & 0 \\
0 & 0 & 1 & 0  \\
\hline
\end{tabular}, 
\end{center} 
and 
                 \begin{center}
                 \begin{tabular}{c c  | c c }
\hline
  \multicolumn{2}{c}{$\mc{E}(:,:,1)$} & 
  \multicolumn{2}{c}{$\mc{E}(:,:,2)$} \\
\hline
1 & 0 & 0 & 0 \\
0 & 0 & 1 & 0  \\
\hline
\end{tabular}.
\end{center}  
It is clear that the tensor $\mc{E}=\mc{N}^{-1}\1\mc{V}\1\mc{S}^{\dg}\2\mc{U}^H\2\mc{M}$, which is the weighted Moore-Penrose inverse $\mc{A}_{\mc{M},\mc{N}}^{\dagger}$ of $\mc{A}.$ 
\end{example}
The next result can be easily verified using the definition of the weighted Moore-Penrose inverse of a tensor. 
                  \begin{lemma}\label{lem1}
Let $\mc{A}\in \mb{C}^{I_{1\ldots M}\times J_{1\ldots N}}$, and  $\mc{M}\in \mb{C}^{I_{1\ldots M}\times I_{1\ldots M}}$, $\mc{N}\in \mb{C}^{J_{1\ldots N}\times J_{1\ldots N}}$ be two Hermitian positive definite tensors. If for any tensor  $\mc{B}\in \mb{C}^{I_{1\ldots M}\times J_{1\ldots N}}$, $\mc{A}=\mc{U}\m \mc{B}\n\mc{V}^H$, where $\mc{U}\in \mb{C}^{I_{1\ldots M}\times I_{1\ldots M}}$ and $\mc{V}\in \mb{C}^{J_{1\ldots N}\times J_{1\ldots N}}$ satisfying $\mc{U}^H\m \mc{M} \m \mc{U}=\mc{I}_1$ and $\mc{V}^H\n \mc{N}^{-1} \n \mc{V}=\mc{I}_2$, where $\mc{I}_1\in\mb{C}^{I_{1\ldots M}\times I_{1\ldots M}} $ and $\mc{I}_2\in\mb{C}^{J_{1\ldots N}\times J_{1\ldots N}}$ are the identity tensors, then $\mc{A}_{\mc{M},\mc{N}}^{\dagger}=\mc{N}^{-1} \n \mc{V}\n\mc{B}^{\dagger}\m \mc{U}^H\m \mc{M}. $  
\end{lemma}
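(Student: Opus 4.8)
The plan is to verify directly that the tensor $\mc{X} := \mc{N}^{-1} \n \mc{V} \n \mc{B}^{\dagger} \m \mc{U}^H \m \mc{M}$ satisfies the four defining equations \eqref{wmpeq1}--\eqref{wmpeq4} of the weighted Moore-Penrose inverse from Definition \ref{defwmpi}; by the uniqueness of that inverse it then follows that $\mc{A}_{\mc{M},\mc{N}}^{\dagger} = \mc{X}$. Throughout I would repeatedly use the associativity of the Einstein product, the hypotheses $\mc{U}^H \m \mc{M} \m \mc{U} = \mc{I}_1$ and $\mc{V}^H \n \mc{N}^{-1} \n \mc{V} = \mc{I}_2$, and the fact that $\mc{B}^{\dagger}$ (the ordinary, unweighted Moore-Penrose inverse) satisfies $\mc{B} \1 \mc{B}^{\dagger} \1 \mc{B} = \mc{B}$, $\mc{B}^{\dagger} \1 \mc{B} \1 \mc{B}^{\dagger} = \mc{B}^{\dagger}$, $(\mc{B} \1 \mc{B}^{\dagger})^H = \mc{B} \1 \mc{B}^{\dagger}$, and $(\mc{B}^{\dagger} \1 \mc{B})^H = \mc{B}^{\dagger} \1 \mc{B}$.

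First I would substitute $\mc{A} = \mc{U} \m \mc{B} \n \mc{V}^H$ and $\mc{X} = \mc{N}^{-1} \n \mc{V} \n \mc{B}^{\dagger} \m \mc{U}^H \m \mc{M}$ into $\mc{A} \n \mc{X} \m \mc{A}$ and collapse the interior factors: the block $\mc{V}^H \n \mc{N}^{-1} \n \mc{V}$ becomes $\mc{I}_2$ and the block $\mc{U}^H \m \mc{M} \m \mc{U}$ becomes $\mc{I}_1$, leaving $\mc{U} \m (\mc{B} \1 \mc{B}^{\dagger} \1 \mc{B}) \n \mc{V}^H = \mc{U} \m \mc{B} \n \mc{V}^H = \mc{A}$, which is \eqref{wmpeq1}. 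An entirely parallel collapse handles \eqref{wmpeq2}, reducing $\mc{X} \m \mc{A} \n \mc{X}$ to $\mc{N}^{-1} \n \mc{V} \n (\mc{B}^{\dagger} \1 \mc{B} \1 \mc{B}^{\dagger}) \m \mc{U}^H \m \mc{M} = \mc{X}$. For \eqref{wmpeq3} I would compute $\mc{M} \m \mc{A} \n \mc{X} = \mc{M} \m \mc{U} \m (\mc{B} \1 \mc{B}^{\dagger}) \m \mc{U}^H \m \mc{M}$, then take the conjugate transpose of the right-hand side; since $(\mc{B} \1 \mc{B}^{\dagger})^H = \mc{B} \1 \mc{B}^{\dagger}$ and $(\mc{M} \m \mc{U})^H = \mc{U}^H \m \mc{M}$ (using that $\mc{M}$ is Hermitian), the expression is manifestly self-adjoint. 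Equation \eqref{wmpeq4} is checked the same way: $\mc{N} \n \mc{X} \m \mc{A} = \mc{N} \n \mc{N}^{-1} \n \mc{V} \n (\mc{B}^{\dagger} \1 \mc{B}) \n \mc{V}^H$, and using $\mc{N} \n \mc{N}^{-1} = \mc{I}$ together with the Hermitian symmetry of $\mc{B}^{\dagger} \1 \mc{B}$ and of $\mc{N}$ shows this equals its own conjugate transpose.

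The only genuine subtlety — and the step I would be most careful about — is bookkeeping the sizes and making sure the unweighted Moore-Penrose identities for $\mc{B}$ are being invoked with the right product; here $\mc{B} \in \mb{C}^{I_{1\ldots M} \times J_{1\ldots N}}$ is rectangular, so $\mc{B}^{\dagger}$ is its arbitrary-order Moore-Penrose inverse in the sense of \cite{panigrahy2018}, and one must confirm the products $\mc{B} \n \mc{B}^{\dagger}$ and $\mc{B}^{\dagger} \n \mc{B}$ contract over the correct index blocks so that, e.g., $\mc{B} \n \mc{B}^{\dagger} \in \mb{C}^{I_{1\ldots M} \times I_{1\ldots M}}$ is conformable with $\mc{M}$ and $\mc{U}$. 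Once the index ranges are tracked consistently, every manipulation above is a one-line associativity-and-cancellation argument, so the proof is short. (This also recovers Theorem \ref{thm:WMPI via WSVD} as the special case $\mc{B} = \mc{S}$, since $\mc{S}^{\dagger}$ of the diagonal tensor $\mc{S}$ coincides with the tensor defined in \eqref{eqn: sigma dagger WSVD}.)
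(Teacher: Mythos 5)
Your proposal is correct and takes essentially the same route the paper intends: the paper gives no written argument beyond remarking that the lemma "can be easily verified using the definition of the weighted Moore-Penrose inverse," and your direct check of the four equations \eqref{wmpeq1}--\eqref{wmpeq4} for $\mc{X}=\mc{N}^{-1}\n\mc{V}\n\mc{B}^{\dagger}\m\mc{U}^H\m\mc{M}$, together with uniqueness, is exactly that verification. The only blemish is notational: the Moore--Penrose identities for the rectangular $\mc{B}$ should be written with $\mc{B}\n\mc{B}^{\dagger}$ and $\mc{B}^{\dagger}\m\mc{B}$ (not $\1$), as you yourself flag in the final paragraph, and this does not affect the argument.
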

The above result reduces to the following corollary in the matrix case.
                   \begin{corollary}\label{cor:lem1.1}
Let $A\in\mb{C}^{m\times n}$, and $M\in\mb{C}^{m\times m}$, $N\in\mb{C}^{n\times n}$ be two Hermitian positive definite matrices. If for any matrix  $B\in\mb{C}^{m\times n}$, $A=UBV^*$, where $U\in\mb{C}^{m\times m}$ and $V\in\mb{C}^{n\times n}$ satisfying $U^*MU=I_1$ and $V^*N^{-1}V=I_2$, where $I_1\in \mb{C}^{m\times m}$ and $I_2\in \mb{C}^{n\times n}$ are the identity matrices, then $A_{M,N}^{\dg}=N^{-1}VB^{\dg}U^*M.$  
\end{corollary}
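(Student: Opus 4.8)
The plan is to obtain this corollary as the matrix specialization of Lemma \ref{lem1}, and then to sketch a short self-contained verification as an alternative. A matrix $A\in\mb{C}^{m\times n}$ is nothing but a tensor in $\mb{C}^{I_1\times J_1}$ with $I_1=m$ and $J_1=n$, and, as recalled just after the definition of the Einstein product, $\1$ coincides with ordinary matrix multiplication while $(\cdot)^H$ becomes the usual conjugate transpose $(\cdot)^*$. Under this identification the hypotheses $U^*MU=I_1$, $V^*N^{-1}V=I_2$ and the factorization $A=UBV^*$ are exactly the hypotheses of Lemma \ref{lem1} specialized to first-order tensors on each side, and the conclusion $\mc{A}_{\mc{M},\mc{N}}^{\dagger}=\mc{N}^{-1}\n\mc{V}\n\mc{B}^{\dagger}\m\mc{U}^H\m\mc{M}$ reads $A_{M,N}^{\dg}=N^{-1}VB^{\dg}U^*M$. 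So I would simply record this one-line reduction as the proof.

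For completeness I would also include the direct argument, which is the route to take if one prefers not to invoke the tensor lemma. First rewrite the weight conditions in ``inverse'' form: $U^*MU=I_1$ forces $U$ to be invertible with $MU=(U^*)^{-1}=(U^{-1})^*$ (using $M$ Hermitian), and $U^{-1}=U^*M$; likewise $V^*N^{-1}V=I_2$ gives $N^{-1}V=(V^*)^{-1}=(V^{-1})^*$ and $V^{-1}=V^*N^{-1}$. Consequently $X:=N^{-1}VB^{\dg}U^*M=(V^*)^{-1}B^{\dg}U^{-1}$, and substituting $A=UBV^*$ one checks the four defining equations (the matrix analogues of \eqref{wmpeq1}--\eqref{wmpeq4}): $AXA=UB(B^{\dg}B)V^*=UBV^*=A$ and $XAX=(V^*)^{-1}(B^{\dg}BB^{\dg})U^{-1}=X$ follow from the first two Penrose equations for $B$, while $MAX=(U^{-1})^*(BB^{\dg})U^{-1}$ and $NXA=V(B^{\dg}B)V^*$ are manifestly Hermitian once one uses $(BB^{\dg})^*=BB^{\dg}$ and $(B^{\dg}B)^*=B^{\dg}B$.

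The only thing requiring a little care---the ``main obstacle,'' though it is minor---is the adjoint bookkeeping for the inverse weight factors: one repeatedly uses Hermiticity of $M$ and $N$ to pass between $MU$ and $(U^{-1})^*$, and between $N^{-1}V$ and $(V^*)^{-1}$. Once these identities are in hand, each of the four verifications collapses to a single cancellation, so the matrix case is entirely routine---which is exactly why the paper states it as a corollary of Lemma \ref{lem1}.
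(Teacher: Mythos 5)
Your proposal is correct and follows the paper's own route: the paper states this corollary as the matrix specialization of Lemma \ref{lem1}, exactly your one-line reduction. Your supplementary direct check of the four defining equations for $X=N^{-1}VB^{\dg}U^*M=(V^*)^{-1}B^{\dg}U^{-1}$ is also sound and simply makes explicit the verification that the paper leaves implicit in Lemma \ref{lem1}.
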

 In the following theorem, we present a representation of the weighted Moore-Penrose inverse $\mc{A}_{\mc{M},\mc{N}}^{\dagger}$ of $\mc{A}.$ 
                    \begin{theorem}\label{thm:limitWMPI}
  Let $\mc{A} \in \mb{C}^{I_{1 \ldots M} \times J_{1 \ldots N}}$. If $\mc{M} \in \mb{C}^{I_{1 \ldots M} \times I_{1 \ldots M}}$ and $\mc{N} \in \mb{C}^{J_{1 \ldots N} \times J_{1 \ldots N}}$ are two  Hermitian positive definite tensors, then $$
\mc{A}_{\mc{M},\mc{N}}^{\dagger}=\lim_{\lambda \to 0}[(\lambda\mc{I}_2+\mc{A}_{\mc{M}\mc{N}}^{\#}\m\mc{A})^{-1}\n\mc{A}_{\mc{M}\mc{N}}^{\#}],$$ where $\mc{A}_{\mc{M}\mc{N}}^{\#}$ is the weighted conjugate transpose of $\mc{A}$, $\lambda\in \mb{R}^+$, $\mb{R}^+$ denotes the set of all positive real numbers, and $\mc{I}_2\in\mb{C}^{J_{1\ldots N}\times J_{1\ldots N}}$ is the identity tensor.
\end{theorem}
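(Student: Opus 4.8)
The plan is to reduce the tensor identity to the corresponding matrix identity via the reshaping isomorphism, exactly as was done for the WSVD in Theorem \ref{thm:WSVD} and for the weighted Moore--Penrose inverse in Lemma \ref{lem1}. First I would set $A=\textnormal{rsh}(\mc{A})\in\mb{C}^{m\times n}$, $M=\textnormal{rsh}(\mc{M})\in\mb{C}^{m\times m}$, $N=\textnormal{rsh}(\mc{N})\in\mb{C}^{n\times n}$, where $m=I_{1}\cdots I_{M}$ and $n=J_{1}\cdots J_{N}$. Since $\mc{M}$ and $\mc{N}$ are Hermitian positive definite, so are $M$ and $N$ (Hermitian-ness and positive-definiteness are preserved by $\textnormal{rsh}$ since it respects the Einstein product and the conjugate transpose). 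By Lemma \ref{lem:rsh EP prop} and Definition \ref{defwct}, $\textnormal{rsh}(\mc{A}_{\mc{M}\mc{N}}^{\#})=N^{-1}A^{*}M=:A_{MN}^{\#}$, the matrix weighted conjugate transpose. Then $\textnormal{rsh}$ carries the candidate tensor $(\lambda\mc{I}_2+\mc{A}_{\mc{M}\mc{N}}^{\#}\m\mc{A})^{-1}\n\mc{A}_{\mc{M}\mc{N}}^{\#}$ to the matrix $(\lambda I_n+A_{MN}^{\#}A)^{-1}A_{MN}^{\#}$ (using that $\textnormal{rsh}$ sends inverses to inverses and the identity tensor to the identity matrix), and $\textnormal{rsh}$ is continuous — being linear on finite-dimensional spaces — so it commutes with the limit as $\lambda\to0^{+}$.

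Thus the whole statement will follow once I establish the matrix limit formula
\[
A_{M,N}^{\dg}=\lim_{\lambda\to0^{+}}\bigl[(\lambda I_n+A_{MN}^{\#}A)^{-1}A_{MN}^{\#}\bigr],
\]
together with $\textnormal{rsh}^{-1}(A_{M,N}^{\dg})=\mc{A}_{\mc{M},\mc{N}}^{\dg}$, which is immediate from Corollary \ref{cor:lem1.1} (or directly from the four defining equations, since $\textnormal{rsh}$ converts each of \eqref{wmpeq1}--\eqref{wmpeq4} into its matrix analogue). To prove the matrix formula I would use the WSVD $A=USV^{*}$ with $U^{*}MU=I_1$, $V^{*}N^{-1}V=I_2$ from \eqref{eqn:matWSVD}. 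A short computation gives $M=U^{-*}U^{-1}$, $N^{-1}=VV^{*}$, hence $A_{MN}^{\#}=N^{-1}A^{*}M=V S^{*}U^{-1}$, and the matrix analogue of Corollary \ref{cor:lem1.1}/Theorem \ref{thm:WMPI via WSVD} gives $A_{M,N}^{\dg}=VS^{\dg}U^{-1}$. Substituting, $A_{MN}^{\#}A=VS^{*}SV^{-1}$ and $\lambda I_n+A_{MN}^{\#}A=V(\lambda I_n+S^{*}S)V^{-1}$, so
\[
(\lambda I_n+A_{MN}^{\#}A)^{-1}A_{MN}^{\#}=V(\lambda I_n+S^{*}S)^{-1}S^{*}U^{-1}.
\]
Now $(\lambda I_n+S^{*}S)^{-1}S^{*}$ is the diagonal (rectangular) matrix whose $(i,i)$ entry is $\mu_i/(\lambda+\mu_i^{2})$ for $i\le r$ and $0$ otherwise; as $\lambda\to0^{+}$ this tends entrywise to $S^{\dg}$, giving $VS^{\dg}U^{-1}=A_{M,N}^{\dg}$.

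The main obstacle is essentially bookkeeping rather than conceptual: one must be careful that $\textnormal{rsh}$ interacts correctly with inversion and with the weighted operations — in particular that $\textnormal{rsh}(\mc{M}^{-1})=M^{-1}$ and $\textnormal{rsh}(\mc{A}^{H})=A^{*}$, both of which follow from Lemma \ref{lem:rsh EP prop} and the fact that $\textnormal{rsh}$ is a bijective algebra homomorphism for the Einstein product intertwining $H$ with $*$ — and that positive definiteness (needed so that $\lambda I_n+S^{*}S$ and $\lambda I_2+\mc{A}_{\mc{M}\mc{N}}^{\#}\m\mc{A}$ are genuinely invertible for every $\lambda>0$, not merely $\lambda$ small) transfers along $\textnormal{rsh}$. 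Once those structural facts are in hand the proof is a direct translation of the classical matrix limit representation of the weighted pseudoinverse. Alternatively, and perhaps more cleanly, I could avoid the WSVD entirely and verify the matrix formula by checking it produces the Moore--Penrose inverse of the auxiliary matrix $\tilde A=M^{1/2}AN^{-1/2}$ in the sense of Theorem \ref{thmwmpi}, reducing to the unweighted limit formula $\tilde A^{\dg}=\lim_{\lambda\to0^{+}}(\lambda I+\tilde A^{*}\tilde A)^{-1}\tilde A^{*}$; either route works, and I would present whichever the earlier sections make most economical.
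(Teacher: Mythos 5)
Your proposal follows essentially the same route as the paper: the paper's own proof also runs the WSVD through the limit, writing $\mc{A}=\mc{U}\m\mc{S}\n\mc{V}^H$, computing $\mc{A}_{\mc{M}\mc{N}}^{\#}\m\mc{A}=(\mc{V}^H)^{-1}\n\mc{S}^H\m\mc{S}\n\mc{V}^H$, hence $(\lambda\mc{I}_2+\mc{A}_{\mc{M}\mc{N}}^{\#}\m\mc{A})^{-1}\n\mc{A}_{\mc{M}\mc{N}}^{\#}=\mc{N}^{-1}\n\mc{V}\n(\lambda\mc{I}_2+\mc{S}^H\m\mc{S})^{-1}\n\mc{S}^H\m\mc{U}^H\m\mc{M}$, and then taking the entrywise limit $\mu_{IJ}/(\lambda+\mu_{IJ}^2)\to 1/\mu_{IJ}$ to recover $\mc{S}^{\dg}$; the only difference is that the paper does this directly with Einstein products (via Theorems \ref{thm:WSVD} and \ref{thm:WMPI via WSVD}) rather than first reshaping to matrices, which is an inessential distinction since the tensor WSVD was itself obtained by reshaping. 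One algebraic slip in your matrix computation should be fixed: from $V^*N^{-1}V=I_2$ you get $N=VV^*$ (not $N^{-1}=VV^*$), and correspondingly $A_{MN}^{\#}=N^{-1}A^*M=(V^*)^{-1}S^*U^{-1}$, $A_{M,N}^{\dg}=(V^*)^{-1}S^{\dg}U^{-1}$, and $A_{MN}^{\#}A=(V^*)^{-1}S^*SV^*$, i.e.\ every bare $V$ in your displayed identities should be $(V^*)^{-1}=N^{-1}V$ (your $V$ is only correct when $V$ is unitary, i.e.\ $N=I$). The error is consistent, so the structure of the limit argument survives the substitution unchanged and the conclusion $\lim_{\lambda\to0^+}(\lambda I_n+A_{MN}^{\#}A)^{-1}A_{MN}^{\#}=N^{-1}VS^{\dg}U^*M=A_{M,N}^{\dg}$ still follows; your alternative closing suggestion via $\tilde{A}=M^{1/2}AN^{-1/2}$ and Theorem \ref{thmwmpi} is also a valid shortcut, though it is not the route the paper takes.
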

                    \begin{proof}
By Theorem \ref{thm:WSVD} and Theorem \ref{thm:WMPI via WSVD}, we have  $\mc{A}= \mc{U}\m \mc{S}\n \mc{V}^H$ and $\mc{A}_{\mc{M},\mc{N}}^{\dg}=\mc{N}^{-1}\n\mc{V}\n {\mc{S}}^{\dg}\m \mc{U}^H\m \mc{M},$ where \begin{equation*}
          \mc{S}_{i_{1}\cdots i_{M}j_{1}\cdots j_{N}}=\begin{cases}
          \mu_{IJ}>0,&\text{if } I=J\in\{1,~2,~\ldots,~r\},\\
         0,&\text{otherwise},
         \end{cases}
 \end{equation*} 
 and
 \begin{equation*}
            \mc{S}^{\dg}_{j_{1}\cdots j_{N}i_{1}\cdots i_{M}}=\begin{cases}
            \mc{S}^{-1}_{i_1 \cdots i_Mj_1 \cdots j_N  },&\text{if }\mc{S}_{ i_1 \cdots i_M j_1 \cdots j_N} \neq 0,\\
             0,&\text{otherwise},
             \end{cases}
\end{equation*} 
 where $I=\big[i_{1}+\displaystyle\sum_{s=2}^{M}(i_{s}-1)\prod_{u=1}^{s-1}I_{u}\big]$ and $J=\big[j_{1}+\displaystyle\sum_{t=2}^{N}(j_{t}-1)\prod_{v=1}^{t-1}J_{v}\big]$.
 Now, using $\mc{U}^H\m\mc{M}\m\mc{U}=\mc{I}_1~ \textnormal{and}~ \mc{V}^H\n \mc{N}^{-1}\n\mc{V}=\mc{I}_2$, we obtain 
                   \small{\begin{align*}
                 \mc{A}_{\mc{M}\mc{N}}^{\#}\m \mc{A}&= \mc{N}^{-1}\n \mc{A}^H\m \mc{M}\m \mc{A}\\
                &=\mc{N}^{-1}\n \mc{V}\n \mc{S}^H\m \mc{U}^H\m\mc{M}\m \mc{U}\m \mc{S}\n \mc{V}^H\\
               &= (\mc{V}^H)^{-1}\n \mc{S}^H\m \mc{S}\n\mc{V}^H.
\end{align*}} 
Therefore, we get
                 \begin{align*}
               \lambda\mc{I}_2+\mc{A}_{\mc{M}\mc{N}}^{\#}\m\mc{A}&=(\mc{V}^H)^{-1}\n (\lambda\mc{I}_2+\mc{S}^H\m \mc{S})\n \mc{V}^H 
\end{align*} 
and 
                \small{\begin{align*}
              (\lambda\mc{I}_2+\mc{A}_{\mc{M}\mc{N}}^{\#}\m\mc{A})^{-1}&\n \mc{A}_{\mc{M}\mc{N}}^{\#}\\&=(\mc{V}^H)^{-1}\n (\lambda\mc{I}_2+\mc{S}^H\m \mc{S})^{-1}\n \mc{V}^H\n\mc{N}^{-1}\n \mc{V}\n \mc{S}^H\m \mc{U}^H\m\mc{M}\\
              &= \mc{N}^{-1}\n\mc{V} \n (\lambda\mc{I}_2+\mc{S}^H\m \mc{S})^{-1}\n\mc{S}^H\m \mc{U}^H\m\mc{M}.
\end{align*}}
 Now, 
               \begin{equation*}
               \mc{S}^H_{j_{1}\cdots j_{N}i_{1}\cdots i_{M}}=\overline{\mc{S}}_{i_{1}\cdots i_{M}j_{1}\cdots j_{N}}=\begin{cases}
               \mu_{IJ}>0,&\text{if } J=I\in\{1,~2,~\ldots,~r\},\\
               0,&\text{otherwise},
\end{cases}
\end{equation*}
and 
               \begin{equation*}
              (\mc{S}^H \m \mc{S})_{j_{1}\cdots j_{N}k_{1}\cdots k_{N}}=\begin{cases}
              {\mu_{IJ}}^2,&\text{if } J=K\in\{1,~2,~\ldots,~r\},\\
             0,&\text{otherwise}.
\end{cases}
\end{equation*} 
So, we obtain 
               \begin{equation*}
               (\lambda\mc{I}_2+\mc{S}^H\m \mc{S})^{-1}_{j_{1}\cdots j_{N}k_{1}\cdots k_{N}}=\begin{cases}
              \frac{1}{\lambda+{\mu_{IJ}}^2} ,&\text{if } J=K\in\{1,~2,~\ldots,~r\},\\
             \frac{1}{\lambda},&\text{if } J=K\notin\{1,~2,~\ldots,~r\},\\ 0,&\text{otherwise},
\end{cases}
\end{equation*}
and 
               \begin{equation*}
              ((\lambda\mc{I}_2+\mc{S}^H\m \mc{S})^{-1}\n\mc{S}^H)_{j_{1}\cdots j_{N}i_{1}\cdots i_{M}}=\begin{cases}
             \frac{\mu_{IJ}}{\lambda+{\mu_{IJ}}^2},&\text{if } J=I\in\{1,~2,~\ldots,~r\},\\
           0,&\text{otherwise}.
\end{cases}
\end{equation*}
The last equation implies that $\displaystyle \lim_{\lambda \to 0}((\lambda\mc{I}_2+\mc{S}^H\m \mc{S})^{-1}\n\mc{S}^H)=\mc{S}^{\dg}.$ Thus, we get 
                 \begin{align*}
  \lim_{\lambda \to 0}[(\lambda\mc{I}_2+\mc{A}_{\mc{M}\mc{N}}^{\#}\m\mc{A})^{-1}\n\mc{A}_{\mc{M}\mc{N}}^{\#}]&= \lim_{\lambda \to 0} [\mc{N}^{-1}\n\mc{V} \n (\lambda\mc{I}_2+\mc{S}^H\m \mc{S})^{-1}\n\mc{S}^H\m \mc{U}^H\m\mc{M}]\\
  &=\mc{N}^{-1}\n\mc{V} \n \mc{S}^{\dg}\m \mc{U}^H\m\mc{M}\\
  &=\mc{A}_{\mc{M},\mc{N}}^{\dagger}. 
\end{align*}   
\end{proof}
Same as Theorem \ref{thm:limitWMPI}, we can derive the next representation for $\mc{A}_{\mc{M},\mc{N}}^{\dagger}.$ 
            \begin{theorem}\label{thm:limitWMPI 2}
Let $\mc{A}\in \mb{C}^{I_{1\ldots M}\times J_{1\ldots N}}$. If $\mc{M}\in \mb{C}^{I_{1\ldots M}\times I_{1\ldots M}}$ and $\mc{N}\in \mb{C}^{J_{1\ldots N}\times J_{1\ldots N}}$ are two Hermitian positive definite tensors, then 
$$\mc{A}_{\mc{M},\mc{N}}^{\dagger}=\lim_{\lambda \to 0}[\mc{A}_{\mc{M}\mc{N}}^{\#}\m(\lambda\mc{I}_1+\mc{A}\n\mc{A}_{\mc{M}\mc{N}}^{\#})^{-1}],$$ where $\mc{A}_{\mc{M}\mc{N}}^{\#}$ is the weighted conjugate transpose of $\mc{A}$, $\lambda\in \mb{R}^+$, and $\mc{I}_1\in\mb{C}^{I_{1\ldots M}\times I_{1\ldots M}}$ is the identity tensor.
\end{theorem}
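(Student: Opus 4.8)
The plan is to mirror the proof of Theorem \ref{thm:limitWMPI}, interchanging the roles of the two orthogonal-type factors coming from the WSVD. First I would apply Theorem \ref{thm:WSVD} and Theorem \ref{thm:WMPI via WSVD} to write $\mc{A}=\mc{U}\m\mc{S}\n\mc{V}^H$ and $\mc{A}_{\mc{M},\mc{N}}^{\dg}=\mc{N}^{-1}\n\mc{V}\n\mc{S}^{\dg}\m\mc{U}^H\m\mc{M}$, together with the orthogonality relations $\mc{U}^H\m\mc{M}\m\mc{U}=\mc{I}_1$ and $\mc{V}^H\n\mc{N}^{-1}\n\mc{V}=\mc{I}_2$. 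The second identity yields $\mc{N}^{-1}\n\mc{V}=(\mc{V}^H)^{-1}$ and the first yields $\mc{U}^{-1}=\mc{U}^H\m\mc{M}$; these are the substitutions that make everything collapse.

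Next I would insert $\mc{A}_{\mc{M}\mc{N}}^{\#}=\mc{N}^{-1}\n\mc{A}^H\m\mc{M}=\mc{N}^{-1}\n\mc{V}\n\mc{S}^H\m\mc{U}^H\m\mc{M}$ and compute, using both orthogonality relations, $\mc{A}\n\mc{A}_{\mc{M}\mc{N}}^{\#}=\mc{U}\m(\mc{S}\n\mc{S}^H)\m\mc{U}^{-1}$. Hence $\lambda\mc{I}_1+\mc{A}\n\mc{A}_{\mc{M}\mc{N}}^{\#}=\mc{U}\m(\lambda\mc{I}_1+\mc{S}\n\mc{S}^H)\m\mc{U}^{-1}$, which is invertible for every $\lambda>0$ since $\mc{S}\n\mc{S}^H$ is the diagonal tensor with entries $\mu_{II}^2\ge 0$; this also records that the expression in the statement is well defined. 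Multiplying the inverse on the left by $\mc{A}_{\mc{M}\mc{N}}^{\#}$ and using $\mc{U}^H\m\mc{M}\m\mc{U}=\mc{I}_1$ once more gives $\mc{A}_{\mc{M}\mc{N}}^{\#}\m(\lambda\mc{I}_1+\mc{A}\n\mc{A}_{\mc{M}\mc{N}}^{\#})^{-1}=\mc{N}^{-1}\n\mc{V}\n\big[\mc{S}^H\m(\lambda\mc{I}_1+\mc{S}\n\mc{S}^H)^{-1}\big]\m\mc{U}^H\m\mc{M}$.

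Finally I would carry out the entrywise limit exactly as in Theorem \ref{thm:limitWMPI}: since $\mc{S}\n\mc{S}^H$ carries $\mu_{II}^2$ on its first $r$ diagonal slots, $(\lambda\mc{I}_1+\mc{S}\n\mc{S}^H)^{-1}$ carries $1/(\lambda+\mu_{II}^2)$ there and $1/\lambda$ elsewhere, so $\mc{S}^H\m(\lambda\mc{I}_1+\mc{S}\n\mc{S}^H)^{-1}$ has its $(j_{1\ldots N},i_{1\ldots M})$-entry equal to $\mu_{II}/(\lambda+\mu_{II}^2)$ when $J=I\in\{1,\ldots,r\}$ and $0$ otherwise; as $\lambda\to 0$ this converges to $1/\mu_{II}$ on those slots, i.e.\ to $\mc{S}^{\dg}$. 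Substituting back, $\lim_{\lambda\to0}\mc{A}_{\mc{M}\mc{N}}^{\#}\m(\lambda\mc{I}_1+\mc{A}\n\mc{A}_{\mc{M}\mc{N}}^{\#})^{-1}=\mc{N}^{-1}\n\mc{V}\n\mc{S}^{\dg}\m\mc{U}^H\m\mc{M}=\mc{A}_{\mc{M},\mc{N}}^{\dagger}$ by Theorem \ref{thm:WMPI via WSVD}.

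I do not expect a genuine obstacle, since the WSVD simultaneously diagonalizes the relevant products; the only points needing care are the order of the noncommuting Einstein products, applying each orthogonality relation on the correct side, and confirming that $\lambda\mc{I}_1+\mc{A}\n\mc{A}_{\mc{M}\mc{N}}^{\#}$ is invertible for $\lambda>0$ so that the limit is meaningful. A WSVD-free alternative would be to verify directly that the displayed limit satisfies the four equations \eqref{wmpeq1}--\eqref{wmpeq4} of Definition \ref{defwmpi}, but the route above is shorter with the machinery already in hand.
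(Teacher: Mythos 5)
Your proposal is correct and follows exactly the route the paper intends: the paper gives no separate argument for this theorem, stating only that it is derived ``same as Theorem \ref{thm:limitWMPI}'', and your mirrored computation --- forming $\mc{A}\n\mc{A}_{\mc{M}\mc{N}}^{\#}=\mc{U}\m(\mc{S}\n\mc{S}^H)\m\mc{U}^{-1}$ via the WSVD, inverting $\lambda\mc{I}_1+\mc{A}\n\mc{A}_{\mc{M}\mc{N}}^{\#}$, and taking the entrywise limit to recover $\mc{S}^{\dg}$ --- is precisely that adaptation. The side checks you include (invertibility for $\lambda>0$ and the correct placement of the orthogonality relations) are sound and only strengthen the argument.
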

By Proposition 2.4 of \cite{sun2016} and Definition \ref{defwmpi}, we can prove Lemma \ref{lem2}.
                 \begin{lemma}\label{lem2}
Let $\mc{B}\in \mb{C}^{I_{1\ldots N}\times I_{1\ldots N}}$ be an invertible tensor and let a block tensor $\mc{A}$ be defined by
                  \begin{equation*}
                 \mc{A}=\begin{bmatrix}\mc{B}&\mc{O}\\\mc{O}&\mc{O}\end{bmatrix},
\end{equation*} 
where $\mc{O}\in \mb{C}^{I_{1\ldots N}\times I_{1\ldots N}}$ is the zero tensor. Let $\mc{M}, \mc{N} \in \mb{C}^{2I_{1}\times\ldots\times 2I_ {N}\times 2I_{1}\times\ldots\times 2I_ {N}} $ be two diagonal tensors whose diagonal entries are positive.
Then,
                 \begin{equation*}
                \mc{A}_{\mc{M},\mc{N}}^{\dg}=\begin{bmatrix}\mc{B}^{-1}&\mc{O}\\
                \mc{O}&\mc{O}\end{bmatrix}.
\end{equation*}
\end{lemma}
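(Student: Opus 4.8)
The plan is to check directly that the tensor $\mc{X}=\begin{bmatrix}\mc{B}^{-1}&\mc{O}\\\mc{O}&\mc{O}\end{bmatrix}$ satisfies the four equations \eqref{wmpeq1}--\eqref{wmpeq4} defining the weighted Moore-Penrose inverse of $\mc{A}$ with respect to the weights $\mc{M}$ and $\mc{N}$, and then appeal to the uniqueness of $\mc{A}_{\mc{M},\mc{N}}^{\dg}$ guaranteed by Definition \ref{defwmpi}. The engine of the argument is the block-multiplication rule for the Einstein product of $2\times 2$ block tensors (Proposition 2.4 of \cite{sun2016}): products are formed block-entrywise, exactly as for block matrices. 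Note that here $\m$ and $\n$ both contract $N$ modes, since both index groups of $\mc{A}$ have order $N$, so the four equations involve only the single contraction $\n$.

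First I would record, using $\mc{B}\n\mc{B}^{-1}=\mc{I}=\mc{B}^{-1}\n\mc{B}$ together with the block rule, that $\mc{A}\n\mc{X}=\mc{X}\n\mc{A}=\mc{P}$, where $\mc{P}:=\begin{bmatrix}\mc{I}&\mc{O}\\\mc{O}&\mc{O}\end{bmatrix}$. Equations \eqref{wmpeq1} and \eqref{wmpeq2} then drop out at once: $\mc{A}\n\mc{X}\n\mc{A}=\mc{P}\n\mc{A}=\mc{A}$ and $\mc{X}\n\mc{A}\n\mc{X}=\mc{P}\n\mc{X}=\mc{X}$, since left multiplication by $\mc{P}$ fixes any block tensor supported in the first block row. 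For \eqref{wmpeq3} and \eqref{wmpeq4}, the point is that $\mc{P}$ is a diagonal tensor with entries in $\{0,1\}$ while $\mc{M}$ and $\mc{N}$ are diagonal with positive diagonal entries; hence $\mc{M}\n\mc{A}\n\mc{X}=\mc{M}\n\mc{P}$ and $\mc{N}\n\mc{X}\n\mc{A}=\mc{N}\n\mc{P}$ are again diagonal tensors with real (nonnegative) diagonal entries, and any such tensor equals its own conjugate transpose. This yields $(\mc{M}\n\mc{A}\n\mc{X})^{H}=\mc{M}\n\mc{A}\n\mc{X}$ and $(\mc{N}\n\mc{X}\n\mc{A})^{H}=\mc{N}\n\mc{X}\n\mc{A}$, completing the verification; uniqueness of the weighted Moore-Penrose inverse then gives $\mc{A}_{\mc{M},\mc{N}}^{\dg}=\mc{X}$.

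The only genuinely delicate point is notational bookkeeping: one must pin down the $2\times2$ block-tensor convention so that the mode sizes $2I_1,\dots,2I_N$ split as $I_k+I_k$ compatibly with the blocks, and confirm that the Einstein product respects this partition — which is precisely the content of Proposition 2.4 of \cite{sun2016}. Once that rule is in hand, each of the four checks is a one-line block computation, so I expect no substantial obstacle beyond this setup.
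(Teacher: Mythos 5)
Your proposal is correct and follows essentially the same route as the paper, which proves the lemma by combining the block-multiplication rule for the Einstein product (Proposition 2.4 of \cite{sun2016}) with a direct verification of the four defining equations in Definition \ref{defwmpi}. Your added observation that $\mc{M}\n\mc{P}$ and $\mc{N}\n\mc{P}$ are diagonal with nonnegative real entries, hence Hermitian, is exactly the detail needed to check \eqref{wmpeq3} and \eqref{wmpeq4}, so nothing is missing.
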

The above result reduces to the following corollary when we consider identity tensor as weights.
                 \begin{corollary}\label{cor:lem2.1}
Let $\mc{B}\in \mb{C}^{I_{1\ldots N}\times I_{1\ldots N}}$ be an invertible tensor and let a block tensor $\mc{A}$ be defined by
                \begin{equation*}
                \mc{A}=\begin{bmatrix}\mc{B}&\mc{O}\\\mc{O}&\mc{O}\end{bmatrix},
\end{equation*}
where $\mc{O}\in \mb{C}^{I_{1\ldots N}\times I_{1\ldots N}}$ is the zero tensor. Then, 
                \begin{equation*}
                \mc{A}^{\dg}=\begin{bmatrix}\mc{B}^{-1}&\mc{O}\\\mc{O}&\mc{O}\end{bmatrix}.
\end{equation*}
\end{corollary}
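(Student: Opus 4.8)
The plan is to obtain this corollary as the immediate specialization of Lemma \ref{lem2} in which the weight tensors are taken to be identity tensors. The key observation, recorded in the last line of Definition \ref{defwmpi}, is that when $\mc{M}$ and $\mc{N}$ are identity tensors one has $\mc{A}_{\mc{M},\mc{N}}^{\dg}=\mc{A}^{\dg}$. So the first step is simply to choose $\mc{M}=\mc{N}=\mc{I}$, the identity tensor of the size appropriate to the block tensor $\mc{A}$; since the identity tensor is diagonal with every diagonal entry equal to $1>0$, it meets the hypotheses imposed on $\mc{M}$ and $\mc{N}$ in Lemma \ref{lem2}.

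The second step is to apply Lemma \ref{lem2} with this choice of weights, which gives $\mc{A}_{\mc{I},\mc{I}}^{\dg}=\begin{bmatrix}\mc{B}^{-1}&\mc{O}\\\mc{O}&\mc{O}\end{bmatrix}$, and then to invoke $\mc{A}_{\mc{I},\mc{I}}^{\dg}=\mc{A}^{\dg}$ from Definition \ref{defwmpi} to conclude. Alternatively, one can give a self-contained verification: set $\mc{X}:=\begin{bmatrix}\mc{B}^{-1}&\mc{O}\\\mc{O}&\mc{O}\end{bmatrix}$ and check the four Penrose equations \eqref{wmpeq1}--\eqref{wmpeq4} with $\mc{M},\mc{N}$ identity. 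Using the block arithmetic for the Einstein product (the same arithmetic underlying Proposition 2.4 of \cite{sun2016} that is used to prove Lemma \ref{lem2}), one computes $\mc{A}\n\mc{X}=\mc{X}\n\mc{A}=\begin{bmatrix}\mc{I}'&\mc{O}\\\mc{O}&\mc{O}\end{bmatrix}$ with $\mc{I}'$ the identity tensor of the relevant block; this is visibly Hermitian, so \eqref{wmpeq3} and \eqref{wmpeq4} hold, while $\mc{A}\n\mc{X}\n\mc{A}=\mc{A}$ and $\mc{X}\n\mc{A}\n\mc{X}=\mc{X}$ give \eqref{wmpeq1} and \eqref{wmpeq2}. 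Uniqueness of the Moore-Penrose inverse then yields $\mc{A}^{\dg}=\mc{X}$.

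There is essentially no obstacle in this argument; the only point deserving a moment's care is that the block decomposition is compatible with the Einstein product, so that $\mc{B}\n\mc{B}^{-1}=\mc{I}'$ genuinely occupies the $(1,1)$ block of $\mc{A}\n\mc{X}$ and the off-diagonal and $(2,2)$ blocks vanish. Since this block-tensor arithmetic has already been established and used in the proof of Lemma \ref{lem2}, the corollary follows with no further work.
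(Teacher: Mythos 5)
Your proposal is correct and matches the paper's intent exactly: the paper presents this corollary as the immediate specialization of Lemma \ref{lem2} to identity weights (which are indeed diagonal with positive diagonal entries), combined with the fact from Definition \ref{defwmpi} that $\mc{A}_{\mc{I},\mc{I}}^{\dg}=\mc{A}^{\dg}$. Your additional direct verification of the four Penrose equations is a fine alternative but not needed.
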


\section{Weighted normal tensor}\label{sec:wnormaltnsr}
In this section, we first introduce the notions of weighted self-conjugate and weighted normal tensor, and exploit their various properties. Further, we show that Theorem \ref{thm:normaltn} does not hold if we replace $\mc{A}^{\dg}$ by $\mc{A}_{\mc{M},\mc{N}}^{\dagger}$. 
                 \begin{definition}
Let $\mc{A}\in \mb{C}^{I_{1\ldots N}\times I_{1\ldots N}}$ be an even-order square tensor, and $\mc{N}\in \mb{C}^{I_{1\ldots N}\times I_{1\ldots N}}$ be a Hermitian positive definite tensor. Then, the tensor $\mc{A}$ is called weighted self-conjugate tensor if $\mc{A}_{\mc{N}\mc{N}}^{\#}=\mc{A}$, i.e., $$ \mc{N}^{-1}\n \mc{A}^H\n\mc{N}=\mc{A}.$$   
\end{definition}
                 \begin{definition}
Let $\mc{A}\in \mb{C}^{I_{1\ldots N}\times I_{1\ldots N}}$ be an even-order square tensor, and $\mc{N}\in \mb{C}^{I_{1\ldots N}\times I_{1\ldots N}}$ be a Hermitian positive definite tensor. Then, the tensor $\mc{A}$ is called the weighted normal tensor if $$\mc{A}_{\mc{N}\mc{N}}^{\#}\n\mc{A}=\mc{A}\n\mc{A}_{\mc{N}\mc{N}}^{\#}.$$ In particular, if $\mc{N}$ is the identity tensor, then the tensor $\mc{A}$ becomes a normal tensor.
\end{definition}
If $N=1$, then $A\in \mathbb{C}^{I_1\times I_1}$ is a square matrix, and $N\in \mathbb{C}^{I_1\times I_1}$ is a Hermitian positive definite matrix. Then, $A$ is called  \textit{weighted normal matrix} if $AA^{\#}=A^{\#}A$, where $A^{\#}=N^{-1}A^*N$ is the weighted conjugate transpose of $A.$
Here, we provide an example which shows that $\mc{A}_{\mc{M},\mc{N}}^{\dagger}$ is not necessarily normal (or Hermitian) tensor even if $\mc{A}$ is normal (or Hermitian) or $\mc{A}^{\dagger}$ is normal (or Hermitian).
                  \begin{example}\label{exnwmp}
Let $A=\begin{pmatrix}
4&0\\
0&0\\
\end{pmatrix}\in \mathbb{C}^{2\times2}$. If $M=\begin{pmatrix}
1&0\\
0&2\\
\end{pmatrix}$ and $N=\begin{pmatrix}
2&1\\
1&2\\
\end{pmatrix}$ are two Hermitian positive definite matrices in $\mathbb{C}^{2\times2}$, then $A^{\dagger}= \begin{pmatrix}
\frac{1}{4}&0\\
0&0\\
\end{pmatrix}$ and $A_{M,N}^{\dagger}=\begin{pmatrix}
\frac{1}{4}&0\\
-\frac{1}{8}&0\\
\end{pmatrix}.$ Thus, we have 
                     \begin{eqnarray*}
AA^*=A^*A&=&     
                     \begin{pmatrix}
16&0\\
0&0\\
\end{pmatrix},\\
A^{\dagger}(A^{\dagger})^*=(A^{\dagger})^*A^{\dagger}&=&\begin{pmatrix}
\frac{1}{16}&0\\
0&0\\
\end{pmatrix},\\
A_{M,N}^{\dagger}(A_{M,N}^{\dagger})^*&=& 
                    \begin{pmatrix}
\frac{1}{16}&-\frac{1}{32}\\
-\frac{1}{32}&\frac{1}{64}\\
\end{pmatrix},\\
(A_{M,N}^{\dagger})^*A_{M,N}^{\dagger}&=& 
                    \begin{pmatrix}
\frac{5}{64}&0\\
0&0\\
\end{pmatrix}.\end{eqnarray*}
Clearly, $A_{M,N}^{\dagger}(A_{M,N}^{\dagger})^* \neq (A_{M,N}^{\dagger})^*A_{M,N}^{\dagger}$, i.e., $A_{M,N}^{\dagger}$ is not normal, also it is not Hermitian. 
\end{example}
The following result gives a necessary and sufficient condition for the weighted Moore-Penrose inverse of a tensor to be weighted normal.
                     \begin{theorem}\label{thmwnr}
Let $\mc{A}\in \mb{C}^{I_{1\ldots N}\times I_{1\ldots N}}$ be an even-order square tensor. If $\mc{N}\in \mb{C}^{I_{1\ldots N}\times I_{1\ldots N}}$ is a Hermitian positive definite tensor, then
                     \begin{enumerate}[(i)]
\item $\mc{A}=\mc{A}_{\mc{N}\mc{N}}^{\#}$ if and only if $\tilde{\mc{A}}= \tilde{\mc{A}}^H$;
\item $\mc{A}\n\mc{A}_{\mc{N}\mc{N}}^{\#}= \mc{A}_{\mc{N}\mc{N}}^{\#}\n\mc{A}$ if and only if $ \tilde{\mc{A}}\n\tilde{\mc{A}}^H=\tilde{\mc{A}}^H\n\tilde{\mc{A}}$;  
\item $\mc{A}_{\mc{N},\mc{N}}^{\dagger}\n(\mc{A}_{\mc{N},\mc{N}}^{\dagger})_{\mc{N}\mc{N}}^{\#}=(\mc{A}_{\mc{N},\mc{N}}^{\dagger})_{\mc{N}\mc{N}}^{\#}\n\mc{A}_{\mc{N},\mc{N}}^{\dagger}$ if and only if $\tilde{\mc{A}}^{\dagger}\n(\tilde{\mc{A}}^{\dagger})^H=(\tilde{\mc{A}}^{\dagger})^H\n\tilde{\mc{A}}^{\dagger}$;
\item $\mc{A}\n\mc{A}_{\mc{N}\mc{N}}^{\#}= \mc{A}_{\mc{N}\mc{N}}^{\#}\n\mc{A}$ if and only if $ \mc{A}_{\mc{N},\mc{N}}^{\dagger}\n(\mc{A}_{\mc{N},\mc{N}}^{\dagger})_{\mc{N}\mc{N}}^{\#}=(\mc{A}_{\mc{N},\mc{N}}^{\dagger})_{\mc{N}\mc{N}}^{\#}\n\mc{A}_{\mc{N},\mc{N}}^{\dagger}$, 
\end{enumerate}
where $\mc{A}^H~\textnormal{and}~\mc{A}_{\mc{N}\mc{N}}^{\#}$ are the conjugate transpose and the weighted conjugate transpose of the tensor $\mc{A}$, respectively and $\tilde{\mc{A}}=\mc{N}^{1/2}\n\mc{A}\n\mc{N}^{-1/2}.$  
\end{theorem}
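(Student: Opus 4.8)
The plan is to reduce all four equivalences to the single structural observation that the similarity transformation $\Phi(\mc{B}):=\mc{N}^{1/2}\n\mc{B}\n\mc{N}^{-1/2}$ on $\mb{C}^{I_{1\ldots N}\times I_{1\ldots N}}$ is a bijective algebra homomorphism for the Einstein product that carries the weighted conjugate transpose to the ordinary conjugate transpose. First I would record the routine facts: $\Phi$ is invertible with inverse $\mc{B}\mapsto\mc{N}^{-1/2}\n\mc{B}\n\mc{N}^{1/2}$; it is multiplicative, $\Phi(\mc{B}\n\mc{C})=\Phi(\mc{B})\n\Phi(\mc{C})$, since the factor $\mc{N}^{-1/2}\n\mc{N}^{1/2}$ inserted between the two images cancels; and, using that $\mc{N}$ and hence $\mc{N}^{\pm1/2}$ are Hermitian so that $(\mc{N}^{\pm1/2})^H=\mc{N}^{\pm1/2}$,
\[
\Phi(\mc{A}_{\mc{N}\mc{N}}^{\#})=\mc{N}^{1/2}\n\mc{N}^{-1}\n\mc{A}^H\n\mc{N}\n\mc{N}^{-1/2}=\mc{N}^{-1/2}\n\mc{A}^H\n\mc{N}^{1/2}=\Phi(\mc{A})^H .
\]
Since $\tilde{\mc{A}}=\Phi(\mc{A})$ by definition, this already gives $\tilde{\mc{A}}^H=\Phi(\mc{A}_{\mc{N}\mc{N}}^{\#})$.

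Next I would cash this in. For (i), apply injectivity of $\Phi$: $\mc{A}=\mc{A}_{\mc{N}\mc{N}}^{\#}\iff\Phi(\mc{A})=\Phi(\mc{A}_{\mc{N}\mc{N}}^{\#})\iff\tilde{\mc{A}}=\tilde{\mc{A}}^H$. For (ii), apply $\Phi$ to both products and use multiplicativity: $\mc{A}\n\mc{A}_{\mc{N}\mc{N}}^{\#}=\mc{A}_{\mc{N}\mc{N}}^{\#}\n\mc{A}\iff\Phi(\mc{A})\n\Phi(\mc{A}_{\mc{N}\mc{N}}^{\#})=\Phi(\mc{A}_{\mc{N}\mc{N}}^{\#})\n\Phi(\mc{A})\iff\tilde{\mc{A}}\n\tilde{\mc{A}}^H=\tilde{\mc{A}}^H\n\tilde{\mc{A}}$. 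For (iii), I would first observe from Theorem \ref{thmwmpi} (taking $\mc{M}=\mc{N}$) that $\mc{A}_{\mc{N},\mc{N}}^{\dagger}=\mc{N}^{-1/2}\n\tilde{\mc{A}}^{\dagger}\n\mc{N}^{1/2}=\Phi^{-1}(\tilde{\mc{A}}^{\dagger})$, that is, $\Phi(\mc{A}_{\mc{N},\mc{N}}^{\dagger})=\tilde{\mc{A}}^{\dagger}$. Writing $\mc{B}:=\mc{A}_{\mc{N},\mc{N}}^{\dagger}$ and applying the intertwining identity above with $\mc{B}$ in place of $\mc{A}$ gives $\Phi(\mc{B}_{\mc{N}\mc{N}}^{\#})=\Phi(\mc{B})^H=(\tilde{\mc{A}}^{\dagger})^H$, so the same manipulation as in (ii), now with $\mc{B}$ in place of $\mc{A}$, yields (iii).

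Finally (iv) is the composite chain: $\mc{A}\n\mc{A}_{\mc{N}\mc{N}}^{\#}=\mc{A}_{\mc{N}\mc{N}}^{\#}\n\mc{A}$ $\iff$ (by (ii)) $\tilde{\mc{A}}$ is normal $\iff$ (by Theorem \ref{thm:normaltn}) $\tilde{\mc{A}}^{\dagger}$ is normal $\iff$ (by (iii)) $\mc{A}_{\mc{N},\mc{N}}^{\dagger}\n(\mc{A}_{\mc{N},\mc{N}}^{\dagger})_{\mc{N}\mc{N}}^{\#}=(\mc{A}_{\mc{N},\mc{N}}^{\dagger})_{\mc{N}\mc{N}}^{\#}\n\mc{A}_{\mc{N},\mc{N}}^{\dagger}$. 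There is no deep obstacle here; the one point that needs genuine care is the intertwining identity $\Phi(\mc{A}_{\mc{N}\mc{N}}^{\#})=\Phi(\mc{A})^H$, which is exactly where Hermitian-ness of $\mc{N}$ enters (so that $(\mc{N}^{\pm1/2})^H=\mc{N}^{\pm1/2}$), while positive definiteness is used only to guarantee that $\mc{N}^{1/2}$ and $\mc{N}^{-1/2}$ exist and are again Hermitian positive definite — these facts, together with $\mc{N}^{\pm1/2}\n\mc{N}^{\mp1/2}=\mc{I}$, transfer from the matrix setting through the reshaping isomorphism of Lemma \ref{lem:rsh EP prop}.
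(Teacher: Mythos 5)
Your proposal is correct, and it follows essentially the same route the paper takes: reduce everything to the similarity $\tilde{\mc{A}}=\mc{N}^{1/2}\n\mc{A}\n\mc{N}^{-1/2}$, using $(\mc{N}^{\pm 1/2})^{H}=\mc{N}^{\pm 1/2}$ to turn $\mc{A}_{\mc{N}\mc{N}}^{\#}$ into $\tilde{\mc{A}}^{H}$, Theorem \ref{thmwmpi} (with $\mc{M}=\mc{N}$) to identify $\mc{A}_{\mc{N},\mc{N}}^{\dagger}$ with $\mc{N}^{-1/2}\n\tilde{\mc{A}}^{\dagger}\n\mc{N}^{1/2}$, and Theorem \ref{thm:normaltn} to pass between normality of $\tilde{\mc{A}}$ and of $\tilde{\mc{A}}^{\dagger}$ for part (iv). Packaging the computation as the multiplicative, injective map $\Phi$ is just a tidy way of writing the same cancellation-of-$\mc{N}^{\pm 1/2}$ argument, so no gap remains.
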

In the matrix setting, the above result is as follows.
                    \begin{corollary}\label{cor:thmwnr1}
Let $A\in \mathbb{C}^{n\times n}$. If $N\in  \mathbb{C}^{n\times n}$ is a Hermitian positive definite matrix, then
                     \begin{enumerate}[(i)]
\item $A=A^{\#}$ if and only if $\tilde{A}=(\tilde{A})^*;$
    \item  $AA^{\#}=A^{\#}A$ if and only if $\tilde{A}(\tilde{A})^*=(\tilde{A})^*\tilde{A};$
    \item $A_{N,N}^{\dagger}(A_{N,N}^{\dagger})^{\#}=(A_{N,N}^{\dagger})^{\#}A_{N,N}^{\dagger}$ if and only if $\tilde{A}^{\dagger}(\tilde{A}^{\dagger})^*=(\tilde{A}^{\dagger})^*\tilde{A}^{\dagger};$ 
    \item $AA^{\#}=A^{\#}A$ if and only if $A_{N,N}^{\dagger}(A_{N,N}^{\dagger})^{\#}=(A_{N,N}^{\dagger})^{\#}A_{N,N}^{\dagger}$, 
\end{enumerate}
where $A^*~\textnormal{and}~A^{\#}$ are the conjugate transpose and the weighted conjugate transpose of the matrix $A$, respectively and $\tilde{A}=N^{1/2}AN^{-1/2}.$
\end{corollary}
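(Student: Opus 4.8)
The plan is to transport every statement to the auxiliary tensor $\tilde{\mc{A}}=\mc{N}^{1/2}\n\mc{A}\n\mc{N}^{-1/2}$ by means of the similarity transformation $\Phi(\mc{X})=\mc{N}^{1/2}\n\mc{X}\n\mc{N}^{-1/2}$. Since $\mc{N}$ is Hermitian positive definite it has a Hermitian positive definite square root $\mc{N}^{1/2}$ with inverse $\mc{N}^{-1/2}$, so $\Phi$ is a bijection on $\mb{C}^{I_{1\ldots N}\times I_{1\ldots N}}$ with inverse $\mc{X}\mapsto\mc{N}^{-1/2}\n\mc{X}\n\mc{N}^{1/2}$; inserting $\mc{N}^{-1/2}\n\mc{N}^{1/2}=\mc{I}$ shows it is multiplicative, $\Phi(\mc{X}\n\mc{Y})=\Phi(\mc{X})\n\Phi(\mc{Y})$. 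Using $(\mc{N}^{\pm1/2})^H=\mc{N}^{\pm1/2}$ together with $(\mc{X}\n\mc{Y})^H=\mc{Y}^H\n\mc{X}^H$ gives $\tilde{\mc{A}}^H=\mc{N}^{-1/2}\n\mc{A}^H\n\mc{N}^{1/2}$, whence $\Phi(\mc{A}_{\mc{N}\mc{N}}^{\#})=\mc{N}^{1/2}\n\mc{N}^{-1}\n\mc{A}^H\n\mc{N}\n\mc{N}^{-1/2}=\mc{N}^{-1/2}\n\mc{A}^H\n\mc{N}^{1/2}=\tilde{\mc{A}}^H$. Finally, specializing Theorem \ref{thmwmpi} to $\mc{M}=\mc{N}$ yields $\mc{A}_{\mc{N},\mc{N}}^{\dg}=\mc{N}^{-1/2}\n\tilde{\mc{A}}^{\dg}\n\mc{N}^{1/2}$, i.e. $\Phi(\mc{A}_{\mc{N},\mc{N}}^{\dg})=\widetilde{\mc{A}_{\mc{N},\mc{N}}^{\dg}}=\tilde{\mc{A}}^{\dg}$. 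These identities are the only inputs required.

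Item (i) is then immediate: since $\Phi$ is a bijection and $\Phi(\mc{A})=\tilde{\mc{A}}$, the equality $\mc{A}=\mc{A}_{\mc{N}\mc{N}}^{\#}$ holds iff $\Phi(\mc{A})=\Phi(\mc{A}_{\mc{N}\mc{N}}^{\#})$, i.e. iff $\tilde{\mc{A}}=\tilde{\mc{A}}^H$. For item (ii), multiplicativity of $\Phi$ sends the commutator $\mc{A}\n\mc{A}_{\mc{N}\mc{N}}^{\#}-\mc{A}_{\mc{N}\mc{N}}^{\#}\n\mc{A}$ to $\Phi(\mc{A})\n\Phi(\mc{A}_{\mc{N}\mc{N}}^{\#})-\Phi(\mc{A}_{\mc{N}\mc{N}}^{\#})\n\Phi(\mc{A})=\tilde{\mc{A}}\n\tilde{\mc{A}}^H-\tilde{\mc{A}}^H\n\tilde{\mc{A}}$; bijectivity of $\Phi$ makes one side zero iff the other is, which is exactly the asserted equivalence.

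Item (iii) follows by applying item (ii) verbatim with $\mc{A}$ replaced by the tensor $\mc{A}_{\mc{N},\mc{N}}^{\dg}$ (again even-order, square, of the same format): the right-hand condition produced is $\widetilde{\mc{A}_{\mc{N},\mc{N}}^{\dg}}\n(\widetilde{\mc{A}_{\mc{N},\mc{N}}^{\dg}})^H=(\widetilde{\mc{A}_{\mc{N},\mc{N}}^{\dg}})^H\n\widetilde{\mc{A}_{\mc{N},\mc{N}}^{\dg}}$, and the identity $\widetilde{\mc{A}_{\mc{N},\mc{N}}^{\dg}}=\tilde{\mc{A}}^{\dg}$ rewrites it as $\tilde{\mc{A}}^{\dg}\n(\tilde{\mc{A}}^{\dg})^H=(\tilde{\mc{A}}^{\dg})^H\n\tilde{\mc{A}}^{\dg}$. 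Item (iv) is then a chain of equivalences: by (ii) the left-hand condition holds iff $\tilde{\mc{A}}$ is normal, by (iii) the right-hand condition holds iff $\tilde{\mc{A}}^{\dg}$ is normal, and by Theorem \ref{thm:normaltn} applied to $\tilde{\mc{A}}$ these two are equivalent.

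I do not expect a real obstacle: the whole content is that conjugation by $\mc{N}^{1/2}$ is a multiplicative bijection intertwining $\mc{A}$ with $\tilde{\mc{A}}$, the weighted conjugate transpose $\mc{A}_{\mc{N}\mc{N}}^{\#}$ with the ordinary one $\tilde{\mc{A}}^H$, and the weighted Moore--Penrose inverse $\mc{A}_{\mc{N},\mc{N}}^{\dg}$ with the ordinary one $\tilde{\mc{A}}^{\dg}$ (via Theorem \ref{thmwmpi}). The only step needing attention is the bookkeeping of the powers of $\mc{N}$ when simplifying $\Phi(\mc{A}_{\mc{N}\mc{N}}^{\#})$ and $\widetilde{\mc{A}_{\mc{N},\mc{N}}^{\dg}}$, plus the standing fact that a Hermitian positive definite tensor has a Hermitian positive definite square root. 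One could avoid Theorem \ref{thmwmpi} by checking the four equations of Definition \ref{defwmpi} to get $\widetilde{\mc{A}_{\mc{N},\mc{N}}^{\dg}}=\tilde{\mc{A}}^{\dg}$ directly, but that is a detour; and Corollary \ref{cor:thmwnr1} is simply the case $N=1$, requiring no separate proof.
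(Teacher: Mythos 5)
Your proposal is correct and follows essentially the route the paper intends: Corollary \ref{cor:thmwnr1} is just the matrix ($N=1$) specialization of Theorem \ref{thmwnr}, and your proof of that theorem — conjugation by $\mc{N}^{1/2}$ as a multiplicative bijection sending $\mc{A}\mapsto\tilde{\mc{A}}$, $\mc{A}_{\mc{N}\mc{N}}^{\#}\mapsto\tilde{\mc{A}}^{H}$, and (via Theorem \ref{thmwmpi}) $\mc{A}_{\mc{N},\mc{N}}^{\dg}\mapsto\tilde{\mc{A}}^{\dg}$, with part (iv) closed by Theorem \ref{thm:normaltn} applied to $\tilde{\mc{A}}$ — is exactly the standard argument these cited results are set up to deliver.
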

It is known that if $\lambda\neq 0$ is an eigenvalue of a normal tensor $\mc{A}$, then $1/ \lambda$ is an eigenvalue of its Moore-Penrose inverse $\mc{A}^{\dagger}$. However, this is not true in case of the weighted Moore-Penrose inverse $\mc{A}_{\mc{M},\mc{N}}^{\dagger}$ of $\mc{A}$. The next example is in this direction.
                     \begin{example}\label{exev}
Let $A=\begin{pmatrix}
1&1\\
1&1\\
\end{pmatrix}$. If $M=\begin{pmatrix}
1&0\\
0&2\\
\end{pmatrix}$ and $N=\begin{pmatrix}
3&0\\
0&1\\
\end{pmatrix}$ are two Hermitian positive definite matrices in $\mathbb{C}^{2\times2}$, then $A_{M,N}^{\dagger}=\begin{pmatrix}
\frac{1}{12}&\frac{1}{6}\\
\frac{1}{4}&\frac{1}{2}\\
\end{pmatrix}.$ Here $A$ is normal, and $2$ is an eigenvalue of $A$ but $1/2$ is not an eigenvalue of $A_{M,N}^{\dagger}$.
\end{example}
The weighted normal tensors fulfill the above requirement, i.e., if $\lambda\neq0$ is an eigenvalue of $\mc{A}$, then $1/ \lambda$ is an eigenvalue of its weighted Moore-Penrose inverse, in the case of weighted normal tensor. It is shown in the following theorem. 
                    \begin{theorem}\label{thmev}
Let $\mc{A}\in \mb{C}^{I_{1\ldots N}\times I_{1\ldots N}}$ be an even-order square tensor. If $\mc{N}\in \mb{C}^{I_{1\ldots N}\times I_{1\ldots N}}$ is a Hermitian positive definite tensor, then
                    \begin{enumerate}[(i)]
    \item  $\lambda\in \sigma(\mc{A})$ if and only if $\lambda\in \sigma(\tilde{\mc{A}})$;
    \item  $\lambda\in \sigma(\mc{A}_{\mc{N},\mc{N}}^{\dagger})$ if and only if $\lambda\in \sigma(\tilde{\mc{A}}^{\dagger})$;
    \item  if $\mc{A}$ is the weighted normal tensor 
    and $\lambda\neq0$, then $\lambda\in \sigma(\mc{A})$ if and only if $1/\lambda\in \sigma(\mc{A}_{\mc{N},\mc{N}}^{\dagger})$.
\end{enumerate}
\end{theorem}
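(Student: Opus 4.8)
The plan is to reduce each assertion to a statement about the matrix $\tilde{\mc{A}} = \mc{N}^{1/2} \n \mc{A} \n \mc{N}^{-1/2}$ (more precisely, to the corresponding reshaped matrix) and then invoke the classical spectral theory for normal matrices together with Theorem~\ref{thm:WSVD}, Theorem~\ref{thmwmpi}, and the eigenvalue-transfer facts already assembled. The crucial observation throughout is that conjugation by the invertible tensor $\mc{N}^{1/2}$ is a similarity transformation with respect to the Einstein product, hence preserves the spectrum.

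For part (i), I would argue that $\mc{A} \n \mc{X} = \lambda \mc{X}$ for a nonzero $\mc{X}$ if and only if $\tilde{\mc{A}} \n (\mc{N}^{1/2} \n \mc{X}) = \mc{N}^{1/2} \n \mc{A} \n \mc{X} = \lambda (\mc{N}^{1/2} \n \mc{X})$, and since $\mc{N}^{1/2}$ is invertible, $\mc{N}^{1/2}\n\mc{X}\neq\mc{O}$ exactly when $\mc{X}\neq\mc{O}$; this gives a bijection between eigentensors and shows $\sigma(\mc{A}) = \sigma(\tilde{\mc{A}})$. For part (ii), by Theorem~\ref{thmwmpi} we have $\mc{A}_{\mc{N},\mc{N}}^{\dagger} = \mc{N}^{-1/2} \n \tilde{\mc{A}}^{\dagger} \n \mc{N}^{1/2}$, which is again a similarity via $\mc{N}^{1/2}$ (reading the conjugating factor off the other side), so the same argument yields $\sigma(\mc{A}_{\mc{N},\mc{N}}^{\dagger}) = \sigma(\tilde{\mc{A}}^{\dagger})$.

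For part (iii), combining (i) and (ii) reduces the claim to: if $\mc{A}$ is weighted normal, then for $\lambda \neq 0$, $\lambda \in \sigma(\tilde{\mc{A}})$ iff $1/\lambda \in \sigma(\tilde{\mc{A}}^{\dagger})$. By Theorem~\ref{thmwnr}(ii), $\mc{A}$ being weighted normal is equivalent to $\tilde{\mc{A}}$ being a normal tensor (i.e. $\tilde{\mc{A}} \n \tilde{\mc{A}}^H = \tilde{\mc{A}}^H \n \tilde{\mc{A}}$). So it suffices to prove the corresponding statement for a normal tensor $\tilde{\mc{A}}$ and its ordinary Moore--Penrose inverse $\tilde{\mc{A}}^{\dagger}$. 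I would establish this either by reshaping to a normal matrix and using the fact that a normal matrix is unitarily diagonalizable, so $\tilde{\mc{A}}^{\dagger}$ has eigenvalues $1/\lambda$ on the nonzero part of the spectrum and $0$ elsewhere; or directly, using that for a normal tensor the eigentensor $\mc{X}$ of $\tilde{\mc{A}}$ for $\lambda \neq 0$ satisfies $\tilde{\mc{A}}^H \n \mc{X} = \bar{\lambda}\mc{X}$ and lies in the range of $\tilde{\mc{A}}$, so $\tilde{\mc{A}}^{\dagger} \n \mc{X} = (1/\lambda)\mc{X}$ follows from the defining Moore--Penrose equations. The reshaping route is cleaner and I would take it.

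The main obstacle is the verification in part (iii) that normality genuinely forces the eigenvalue reciprocity — for a non-normal tensor this fails (as Example~\ref{exev} shows), so the proof must use normality in an essential way, namely through the spectral decomposition of the reshaped normal matrix; one must be careful that the reshaping operation of Lemma~\ref{lem:rsh EP prop} intertwines $\tilde{\mc{A}}^{\dagger}$ with $(\mathrm{rsh}(\tilde{\mc{A}}))^{\dagger}$ and preserves normality and the eigenvalue correspondence of Definition~\ref{egndfn}, which is exactly the content needed and follows from Lemma~\ref{lem:rsh EP prop} applied to $\tilde{\mc{A}}^H\n\tilde{\mc{A}}$ and to the Moore--Penrose equations. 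The remaining parts are essentially bookkeeping with similarity transformations.
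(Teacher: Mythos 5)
Your proposal is correct and follows essentially the same route as the paper: parts (i) and (ii) via the similarity $\tilde{\mc{A}}=\mc{N}^{1/2}\n\mc{A}\n\mc{N}^{-1/2}$ together with Theorem~\ref{thmwmpi} (so that $\mc{N}^{1/2}\n\mc{A}_{\mc{N},\mc{N}}^{\dagger}\n\mc{N}^{-1/2}=\tilde{\mc{A}}^{\dagger}$), and part (iii) by reducing through Theorem~\ref{thmwnr}(ii) to the known eigenvalue reciprocity between a normal tensor and its Moore--Penrose inverse. No gaps worth flagging; your care about the reshaping map intertwining $\tilde{\mc{A}}^{\dagger}$ with $(\mathrm{rsh}(\tilde{\mc{A}}))^{\dagger}$ is exactly the point that makes the normal-matrix argument legitimate.
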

Theorem \ref{thmev} reduces to Corollary \ref{cor:thmev1} in the matrix case.  
                   \begin{corollary}\label{cor:thmev1}
Let $A\in \mathbb{C}^{n\times n}$. If $N\in  \mathbb{C}^{n\times n}$ is a Hermitian positive definite matrix, then
                \begin{enumerate}[(i)]
    \item $\lambda\in \sigma(A)$ if and only if $\lambda\in \sigma(\tilde{A})$;
    \item $\lambda\in \sigma(A_{N,N}^{\dagger})$ if and only if $\lambda\in \sigma(\tilde{A}^{\dagger})$;
    \item if $A$ is the weighted normal matrix and $\lambda\neq0$, then $\lambda\in \sigma(A)$ if and only if $1/\lambda\in \sigma(A_{N,N}^{\dagger})$. 
\end{enumerate}
\end{corollary}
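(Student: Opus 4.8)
The plan is to reduce everything about $A$ and $A_{N,N}^\dagger$ to the ordinary (unweighted) objects $\tilde A = N^{1/2}AN^{-1/2}$ and $\tilde A^\dagger$, exactly mirroring the tensor version Theorem~\ref{thmev}; indeed this corollary is just the $N=1$ specialization, so it suffices to re-run the tensor argument in the matrix setting. First I would prove (i): if $Ax=\lambda x$ with $x\neq 0$, set $y = N^{1/2}x \neq 0$; then $\tilde A y = N^{1/2}A N^{-1/2}N^{1/2}x = N^{1/2}Ax = \lambda N^{1/2} x = \lambda y$, so $\lambda \in \sigma(\tilde A)$. The converse is identical using $x = N^{-1/2}y$. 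Since $N$ is Hermitian positive definite, $N^{1/2}$ exists, is invertible, and is again Hermitian positive definite, so the map $x \mapsto N^{1/2}x$ is a bijection on $\mathbb{C}^n \setminus \{0\}$; this is the only place positive-definiteness is used.

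Next, for (ii) I would invoke Theorem~\ref{thmwmpi} in the matrix case (equivalently Corollary~\ref{cor:lem1.1}), which gives $A_{N,N}^\dagger = N^{-1/2}\tilde A^\dagger N^{1/2}$. Hence $\widetilde{A_{N,N}^\dagger} := N^{1/2}A_{N,N}^\dagger N^{-1/2} = \tilde A^\dagger$, and applying part (i) with $A$ replaced by $A_{N,N}^\dagger$ immediately yields $\lambda \in \sigma(A_{N,N}^\dagger) \iff \lambda \in \sigma(\tilde A^\dagger)$. One should double-check the exponent bookkeeping here: with weights $M=N$, Theorem~\ref{thmwmpi} reads $A_{N,N}^\dagger = N^{-1/2}(N^{1/2}AN^{-1/2})^\dagger N^{1/2} = N^{-1/2}\tilde A^\dagger N^{1/2}$, so conjugating by $N^{1/2}$ on the left and $N^{-1/2}$ on the right gives $\tilde A^\dagger$ as claimed.

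For (iii), assume $A$ is weighted normal, i.e. $AA^\# = A^\# A$ with $A^\# = N^{-1}A^*N$. By Corollary~\ref{cor:thmwnr1}(ii) this is equivalent to $\tilde A \tilde A^* = \tilde A^* \tilde A$, i.e. $\tilde A$ is a genuine normal matrix. For a normal matrix the classical spectral theorem gives a unitary diagonalization, and it is standard (and stated earlier in spirit via Theorem~\ref{thm:normaltn}) that if $\lambda \neq 0 \in \sigma(\tilde A)$ then $1/\lambda \in \sigma(\tilde A^\dagger)$: a unit eigenvector $u$ with $\tilde A u = \lambda u$ satisfies $\tilde A^\dagger u = \lambda^{-1} u$ because $\tilde A^\dagger$ acts as $\bar\lambda/|\lambda|^2 \cdot$ — wait, more cleanly, $\tilde A$ normal implies $\tilde A$ and $\tilde A^\dagger$ are simultaneously diagonalizable with $\tilde A^\dagger$ having eigenvalues $\lambda^{-1}$ on the nonzero eigenspaces and $0$ elsewhere. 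Chaining the equivalences: $\lambda \in \sigma(A) \overset{(i)}{\iff} \lambda \in \sigma(\tilde A) \iff 1/\lambda \in \sigma(\tilde A^\dagger) \overset{(ii)}{\iff} 1/\lambda \in \sigma(A_{N,N}^\dagger)$.

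The main obstacle is purely the normal-matrix fact in step (iii) — that $\lambda \neq 0 \in \sigma(\tilde A) \Rightarrow 1/\lambda \in \sigma(\tilde A^\dagger)$ for $\tilde A$ normal — which needs the spectral decomposition $\tilde A = \sum \lambda_i P_i$ with orthogonal projectors $P_i$ summing to the identity, so that $\tilde A^\dagger = \sum_{\lambda_i \neq 0}\lambda_i^{-1}P_i$; everything else is bookkeeping with the invertible conjugator $N^{1/2}$. I would also note explicitly that weighted normality is exactly what makes $\tilde A$ normal (via Corollary~\ref{cor:thmwnr1}(ii)), since without it the implication fails — Example~\ref{exev} shows precisely this. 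Finally, one remarks that everything in this corollary is the $N=1$, $M=N$ shadow of Theorem~\ref{thmev}, so in the write-up it is legitimate to say "the proof is analogous to that of Theorem~\ref{thmev}" and simply indicate the substitutions above.
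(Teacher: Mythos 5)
Your proposal is correct and follows essentially the same route as the paper: the paper obtains this corollary as the matrix specialization of Theorem \ref{thmev}, whose argument is exactly the reduction you carry out --- similarity of $A$ with $\tilde{A}=N^{1/2}AN^{-1/2}$, the representation $A_{N,N}^{\dagger}=N^{-1/2}\tilde{A}^{\dagger}N^{1/2}$ from Theorem \ref{thmwmpi}, and the equivalence of weighted normality of $A$ with normality of $\tilde{A}$ (Corollary \ref{cor:thmwnr1}(ii)) combined with the spectral decomposition of the normal matrix $\tilde{A}$ and of $\tilde{A}^{\dagger}$. The bookkeeping in each step, including the chain $\lambda\in\sigma(A)\Leftrightarrow\lambda\in\sigma(\tilde{A})\Leftrightarrow 1/\lambda\in\sigma(\tilde{A}^{\dagger})\Leftrightarrow 1/\lambda\in\sigma(A_{N,N}^{\dagger})$, is accurate.
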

The next result provides a necessary and sufficient condition for a tensor to commute with its weighted Moore-Penrose inverse.
                \begin{theorem}\label{thm25}
Let $\mc{A}\in \mb{C}^{I_{1\ldots N}\times I_{1\ldots N}}$ be an even-order square tensor. If $\mc{N}\in \mb{C}^{I_{1\ldots N}\times I_{1\ldots N}}$ is a Hermitian positive definite tensor, then $ \mc{A}\n\mc{A}_{\mc{N},\mc{N}}^{\dagger}=\mc{A}_{\mc{N},\mc{N}}^{\dagger}\n\mc{A}~ \textnormal{if and only if }~\tilde{\mc{A}}\n\tilde {\mc{A}}^{\dagger}=\tilde {\mc{A}}^{\dagger}\n\tilde{\mc{A}}.$
\end{theorem}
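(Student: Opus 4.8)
The plan is to reduce the statement about $\mc{A}$ and its weighted Moore–Penrose inverse $\mc{A}_{\mc{N},\mc{N}}^{\dagger}$ to the ordinary (unweighted) Moore–Penrose inverse of the transformed tensor $\tilde{\mc{A}} = \mc{N}^{1/2}\n\mc{A}\n\mc{N}^{-1/2}$, exactly in the spirit of Theorems \ref{thmwnr} and \ref{thmev}. The key ingredient is Theorem \ref{thmwmpi}, which gives $\mc{A}_{\mc{N},\mc{N}}^{\dagger} = \mc{N}^{-1/2}\n\tilde{\mc{A}}^{\dagger}\n\mc{N}^{1/2}$ when the two weights coincide (here $\mc{M}=\mc{N}$, so $\mc{M}^{1/2}\m(\cdot) = \mc{N}^{1/2}\n(\cdot)$ and the formula collapses to this conjugation). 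So both $\mc{A}$ and $\mc{A}_{\mc{N},\mc{N}}^{\dagger}$ are obtained from $\tilde{\mc{A}}$ and $\tilde{\mc{A}}^{\dagger}$, respectively, by conjugating with the fixed invertible tensor $\mc{N}^{1/2}$.

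First I would substitute these two representations into the left-hand identity $\mc{A}\n\mc{A}_{\mc{N},\mc{N}}^{\dagger} = \mc{A}_{\mc{N},\mc{N}}^{\dagger}\n\mc{A}$. Using associativity of the Einstein product and $\mc{N}^{-1/2}\n\mc{N}^{1/2} = \mc{I}$, the left side becomes $\mc{N}^{1/2}\n(\tilde{\mc{A}}\n\tilde{\mc{A}}^{\dagger})\n\mc{N}^{-1/2}$ and the right side becomes $\mc{N}^{1/2}\n(\tilde{\mc{A}}^{\dagger}\n\tilde{\mc{A}})\n\mc{N}^{-1/2}$. Since $\mc{N}^{1/2}$ is invertible (it is the positive definite square root of the Hermitian positive definite tensor $\mc{N}$, and one may pass to reshapings via Lemma \ref{lem:rsh EP prop} to justify existence and invertibility of $\mc{N}^{\pm 1/2}$), left-multiplying by $\mc{N}^{-1/2}$ and right-multiplying by $\mc{N}^{1/2}$ is a bijection. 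Hence the weighted commutation $\mc{A}\n\mc{A}_{\mc{N},\mc{N}}^{\dagger} = \mc{A}_{\mc{N},\mc{N}}^{\dagger}\n\mc{A}$ holds if and only if $\tilde{\mc{A}}\n\tilde{\mc{A}}^{\dagger} = \tilde{\mc{A}}^{\dagger}\n\tilde{\mc{A}}$, which is the claimed equivalence.

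I do not expect a serious obstacle here; the argument is a clean conjugation identity. The one point requiring a little care is the bookkeeping with $\mc{N}^{1/2}$ and $\mc{N}^{-1/2}$: one must confirm that these square roots exist as tensors, are themselves Hermitian positive definite, commute appropriately, and that $(\mc{N}^{1/2})^{-1} = \mc{N}^{-1/2}$ and $(\mc{N}^{1/2})^{H} = \mc{N}^{1/2}$. All of this follows by reshaping to matrices (Definition \ref{def:reshape}, Lemma \ref{lem:rsh EP prop}) and invoking the corresponding standard matrix facts, then reshaping back — the same device already used throughout Section \ref{sec:WSVD}. A second minor check is that Theorem \ref{thmwmpi} with $\mc{M}=\mc{N}$ genuinely yields $\mc{A}_{\mc{N},\mc{N}}^{\dagger} = \mc{N}^{-1/2}\n\tilde{\mc{A}}^{\dagger}\n\mc{N}^{1/2}$; this is immediate from the cited formula. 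With these routine verifications in place, the proof is a two-line computation.
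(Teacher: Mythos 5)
Your proposal is correct and takes essentially the same route as the paper: with $\mc{M}=\mc{N}$, Theorem \ref{thmwmpi} gives $\mc{A}_{\mc{N},\mc{N}}^{\dagger}=\mc{N}^{-1/2}\n\tilde{\mc{A}}^{\dagger}\n\mc{N}^{1/2}$ and $\mc{A}=\mc{N}^{-1/2}\n\tilde{\mc{A}}\n\mc{N}^{1/2}$, so each side of the commutation identity is a conjugate of the corresponding product of $\tilde{\mc{A}}$ and $\tilde{\mc{A}}^{\dagger}$ by the invertible tensor $\mc{N}^{1/2}$, and the equivalence follows. One trivial bookkeeping slip: the products equal $\mc{N}^{-1/2}\n(\tilde{\mc{A}}\n\tilde{\mc{A}}^{\dagger})\n\mc{N}^{1/2}$ and $\mc{N}^{-1/2}\n(\tilde{\mc{A}}^{\dagger}\n\tilde{\mc{A}})\n\mc{N}^{1/2}$, i.e.\ the conjugation runs in the opposite direction from what you wrote, which does not affect the argument.
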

The matrix version of the above result is given below.
                 \begin{corollary}\label{cor:thm25.1}
Let $A\in \mathbb{C}^{n\times n}$. If $N\in  \mathbb{C}^{n\times n}$ is a Hermitian positive definite matrix, then
$AA_{N,N}^{\dg}=A_{N,N}^{\dg}A$ if and only if $\tilde{A}\tilde{A}^{\dg}=\tilde{A}^{\dg}\tilde{A}$. 
\end{corollary}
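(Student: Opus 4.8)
The plan is to transfer both identities in the statement to the \emph{unweighted} tensor $\tilde{\mc{A}}=\mc{N}^{1/2}\n\mc{A}\n\mc{N}^{-1/2}$ by means of Theorem \ref{thmwmpi}, and then cancel the weight factors using that $\mc{N}^{1/2}$ is invertible. This is the same reduction scheme that underlies Theorem \ref{thmwnr} and Theorem \ref{thmev}.

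First I would record the two elementary identities that drive the argument. Since $\mc{N}$ is Hermitian positive definite, $\mc{N}^{1/2}$ is invertible with inverse $\mc{N}^{-1/2}$; hence, inverting the defining relation of $\tilde{\mc{A}}$, we get $\mc{A}=\mc{N}^{-1/2}\n\tilde{\mc{A}}\n\mc{N}^{1/2}$. Next, applying Theorem \ref{thmwmpi} with $\mc{M}=\mc{N}$ yields $\mc{A}_{\mc{N},\mc{N}}^{\dagger}=\mc{N}^{-1/2}\n\tilde{\mc{A}}^{\dagger}\n\mc{N}^{1/2}$. Multiplying these two expressions in both orders, using associativity of the Einstein product and $\mc{N}^{1/2}\n\mc{N}^{-1/2}=\mc{I}$, I obtain
\begin{align*}
\mc{A}\n\mc{A}_{\mc{N},\mc{N}}^{\dagger}&=\mc{N}^{-1/2}\n(\tilde{\mc{A}}\n\tilde{\mc{A}}^{\dagger})\n\mc{N}^{1/2},\\
\mc{A}_{\mc{N},\mc{N}}^{\dagger}\n\mc{A}&=\mc{N}^{-1/2}\n(\tilde{\mc{A}}^{\dagger}\n\tilde{\mc{A}})\n\mc{N}^{1/2}.
\end{align*}

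Finally I would conclude the equivalence. If $\tilde{\mc{A}}\n\tilde{\mc{A}}^{\dagger}=\tilde{\mc{A}}^{\dagger}\n\tilde{\mc{A}}$, the two displayed right-hand sides coincide, so $\mc{A}\n\mc{A}_{\mc{N},\mc{N}}^{\dagger}=\mc{A}_{\mc{N},\mc{N}}^{\dagger}\n\mc{A}$. Conversely, if the latter holds, then $\mc{N}^{-1/2}\n(\tilde{\mc{A}}\n\tilde{\mc{A}}^{\dagger})\n\mc{N}^{1/2}=\mc{N}^{-1/2}\n(\tilde{\mc{A}}^{\dagger}\n\tilde{\mc{A}})\n\mc{N}^{1/2}$; multiplying on the left by $\mc{N}^{1/2}$ and on the right by $\mc{N}^{-1/2}$ cancels the weight factors and gives $\tilde{\mc{A}}\n\tilde{\mc{A}}^{\dagger}=\tilde{\mc{A}}^{\dagger}\n\tilde{\mc{A}}$. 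There is no substantial obstacle here; the only points needing a word of care are the invertibility of $\mc{N}^{1/2}$ (immediate from Hermitian positive definiteness, with inverse $\mc{N}^{-1/2}$) and the legitimacy of the cancellation, which is merely left/right multiplication by invertible tensors together with associativity of $\n$.
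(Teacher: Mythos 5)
Your proposal is correct and follows essentially the paper's route: the paper obtains this corollary as the matrix specialization of Theorem \ref{thm25}, whose proof rests on exactly the reduction you use, namely $\mc{A}_{\mc{N},\mc{N}}^{\dagger}=\mc{N}^{-1/2}\n\tilde{\mc{A}}^{\dagger}\n\mc{N}^{1/2}$ from Theorem \ref{thmwmpi} together with $\mc{A}=\mc{N}^{-1/2}\n\tilde{\mc{A}}\n\mc{N}^{1/2}$, so that both products are conjugates of $\tilde{\mc{A}}\n\tilde{\mc{A}}^{\dagger}$ and $\tilde{\mc{A}}^{\dagger}\n\tilde{\mc{A}}$ by the invertible tensor $\mc{N}^{1/2}$. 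Your cancellation step and the passage from the tensor identity to the stated matrix case are both sound, so no gap remains.
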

Weighted normality is a sufficient condition for commutativity of a tensor with its weighted Moore-Penrose inverse. The following theorem is in this direction. 
                 \begin{theorem}\label{thm26}
Let $\mc{A}\in \mb{C}^{I_{1\ldots N}\times I_{1\ldots N}}$ be an even-order square tensor, and $\mc{N}\in \mb{C}^{I_{1\ldots N}\times I_{1\ldots N}}$ be a Hermitian positive definite tensor. If $\mc{A}$ is weighted normal, then $\mc{A}\n\mc{A}_{\mc{N},\mc{N}}^{\dagger}=\mc{A}_{\mc{N},\mc{N}}^{\dagger}\n\mc{A}.$
\end{theorem}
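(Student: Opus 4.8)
The plan is to reduce the claim to its matrix analogue by using the auxiliary tensor $\tilde{\mc{A}}=\mc{N}^{1/2}\n\mc{A}\n\mc{N}^{-1/2}$ together with the equivalences already established in Theorem~\ref{thmwnr} and Theorem~\ref{thm25}. First I would observe that, by part (ii) of Theorem~\ref{thmwnr}, the hypothesis that $\mc{A}$ is weighted normal, i.e. $\mc{A}\n\mc{A}_{\mc{N}\mc{N}}^{\#}=\mc{A}_{\mc{N}\mc{N}}^{\#}\n\mc{A}$, is equivalent to $\tilde{\mc{A}}\n\tilde{\mc{A}}^H=\tilde{\mc{A}}^H\n\tilde{\mc{A}}$; that is, $\tilde{\mc{A}}$ is a normal tensor in the ordinary (unweighted) sense.

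The next step is to invoke the known fact that an ordinary normal tensor commutes with its Moore--Penrose inverse: since $\tilde{\mc{A}}$ is normal, $\tilde{\mc{A}}\n\tilde{\mc{A}}^{\dagger}=\tilde{\mc{A}}^{\dagger}\n\tilde{\mc{A}}$. This itself follows by applying the reshaping map of Lemma~\ref{lem:rsh EP prop} and Definition~\ref{def:reshape}: $\mathrm{rsh}(\tilde{\mc{A}})$ is a normal matrix, a normal matrix commutes with its Moore--Penrose inverse (a standard fact, provable from the matrix SVD), and $\mathrm{rsh}$ transports Einstein products to matrix products and Moore--Penrose inverses to Moore--Penrose inverses. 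Alternatively one can cite Theorem~\ref{thm:normaltn} machinery; either way this intermediate commutativity is routine.

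Finally I would apply Theorem~\ref{thm25} in the reverse direction: it states that $\mc{A}\n\mc{A}_{\mc{N},\mc{N}}^{\dagger}=\mc{A}_{\mc{N},\mc{N}}^{\dagger}\n\mc{A}$ holds if and only if $\tilde{\mc{A}}\n\tilde{\mc{A}}^{\dagger}=\tilde{\mc{A}}^{\dagger}\n\tilde{\mc{A}}$. Combining the three steps: $\mc{A}$ weighted normal $\implies$ $\tilde{\mc{A}}$ normal $\implies$ $\tilde{\mc{A}}\n\tilde{\mc{A}}^{\dagger}=\tilde{\mc{A}}^{\dagger}\n\tilde{\mc{A}}$ $\implies$ $\mc{A}\n\mc{A}_{\mc{N},\mc{N}}^{\dagger}=\mc{A}_{\mc{N},\mc{N}}^{\dagger}\n\mc{A}$, which is the desired conclusion.

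The main obstacle, such as it is, is purely bookkeeping: one must make sure the definition of ``weighted normal'' in terms of $\mc{A}_{\mc{N}\mc{N}}^{\#}=\mc{N}^{-1}\n\mc{A}^H\n\mc{N}$ is correctly matched to the statement of Theorem~\ref{thmwnr}(ii), and that the $\tilde{\mc{A}}$ appearing in Theorem~\ref{thm25} is the same auxiliary tensor $\mc{N}^{1/2}\n\mc{A}\n\mc{N}^{-1/2}$ used throughout the section (it is). Once those identifications are in place the proof is a one-line chain of implications, so no genuinely hard computation remains; the only thing to be a little careful about is that $\tilde{\mc{A}}$ and $\tilde{\mc{A}}^{\dagger}$ are genuine (unweighted) Moore--Penrose pairs, which is immediate from $\tilde{\mc{A}}$ being an honest tensor and the Moore--Penrose inverse being defined for all tensors via reshaping.
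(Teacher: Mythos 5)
Your proposal is correct and follows essentially the same route the paper takes: use Theorem~\ref{thmwnr}(ii) to convert weighted normality of $\mc{A}$ into normality of $\tilde{\mc{A}}$, note that a normal tensor commutes with its Moore--Penrose inverse (via reshaping to the matrix case), and then transfer back with Theorem~\ref{thm25}. No gaps; the bookkeeping you flag (that the $\tilde{\mc{A}}$ in Theorems~\ref{thmwnr} and \ref{thm25} coincide when $\mc{M}=\mc{N}$) is exactly the right point to check and it does hold.
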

The next result conveys the matrix case of the above theorem. 
                  \begin{corollary}\label{cor:thm26.1}
Let $A\in \mathbb{C}^{n\times n}$, and $N\in \mathbb{C}^{n\times n}$ be a Hermitian positive definite matrix. If $A$ is weighted normal, then $A$ is a weighted EP-matrix w.r.t. $(N,N)$, i.e., $AA_{N,N}^{\dg}=A_{N,N}^{\dg}A.$
\end{corollary}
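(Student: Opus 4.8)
The plan is to reduce the statement to ordinary normality of the auxiliary tensor $\tilde{\mc{A}}=\mc{N}^{1/2}\n\mc{A}\n\mc{N}^{-1/2}$ and then to quote the commutation criterion already proved in Theorem~\ref{thm25}. First I would observe that, by hypothesis, $\mc{A}\n\mc{A}_{\mc{N}\mc{N}}^{\#}=\mc{A}_{\mc{N}\mc{N}}^{\#}\n\mc{A}$, so part~(ii) of Theorem~\ref{thmwnr} immediately yields $\tilde{\mc{A}}\n\tilde{\mc{A}}^H=\tilde{\mc{A}}^H\n\tilde{\mc{A}}$; that is, $\tilde{\mc{A}}$ is an ordinary normal tensor with respect to the Einstein product. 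Thus the weighted problem has been transported to an unweighted one.

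The core of the argument is then the classical fact, transcribed to tensors, that a normal tensor is EP, i.e. $\tilde{\mc{A}}\n\tilde{\mc{A}}^{\dg}=\tilde{\mc{A}}^{\dg}\n\tilde{\mc{A}}$. I would establish this using the Penrose factorizations $\tilde{\mc{A}}^{\dg}=\tilde{\mc{A}}^H\n(\tilde{\mc{A}}\n\tilde{\mc{A}}^H)^{\dg}=(\tilde{\mc{A}}^H\n\tilde{\mc{A}})^{\dg}\n\tilde{\mc{A}}^H$, which hold for the Moore--Penrose inverse via the Einstein product and can be verified directly from Definition~\ref{defwmpi} (with $\mc{M},\mc{N}$ the identity tensors) or simply read off by reshaping through Lemma~\ref{lem:rsh EP prop}. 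Writing $\mc{P}=\tilde{\mc{A}}^H\n\tilde{\mc{A}}=\tilde{\mc{A}}\n\tilde{\mc{A}}^H$ (the two agree by normality), one gets $\tilde{\mc{A}}\n\tilde{\mc{A}}^{\dg}=\mc{P}\n\mc{P}^{\dg}$ and $\tilde{\mc{A}}^{\dg}\n\tilde{\mc{A}}=\mc{P}^{\dg}\n\mc{P}$, and these coincide because $\mc{P}$ is Hermitian, so both equal the orthogonal projector onto the range of $\mc{P}$. A shorter but equivalent route is to reshape: $\tilde{\mc{A}}$ is normal exactly when $\textnormal{rsh}(\tilde{\mc{A}})$ is a normal matrix, a normal matrix is EP, and reshaping back via Lemma~\ref{lem:rsh EP prop} (together with $\textnormal{rsh}(\tilde{\mc{A}}^{\dg})=\textnormal{rsh}(\tilde{\mc{A}})^{\dg}$) returns $\tilde{\mc{A}}\n\tilde{\mc{A}}^{\dg}=\tilde{\mc{A}}^{\dg}\n\tilde{\mc{A}}$.

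Finally, having $\tilde{\mc{A}}\n\tilde{\mc{A}}^{\dg}=\tilde{\mc{A}}^{\dg}\n\tilde{\mc{A}}$, I would invoke Theorem~\ref{thm25}, which asserts precisely that this commutation is equivalent to $\mc{A}\n\mc{A}_{\mc{N},\mc{N}}^{\dagger}=\mc{A}_{\mc{N},\mc{N}}^{\dagger}\n\mc{A}$, and the proof is complete; the matrix statement in Corollary~\ref{cor:thm26.1} then follows by taking $N=1$. The only step carrying any content is the middle one---that a normal tensor is EP in the Einstein-product setting---but it is routine once the Penrose product identities for $(\cdot)^{\dg}$ (or the $\textnormal{rsh}$-compatibility of the Moore--Penrose inverse) are in hand; the first and last steps are direct applications of Theorems~\ref{thmwnr} and~\ref{thm25}.
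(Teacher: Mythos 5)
Your argument is correct and follows essentially the same route the paper intends: pass to $\tilde{\mc{A}}=\mc{N}^{1/2}\n\mc{A}\n\mc{N}^{-1/2}$ via Theorem~\ref{thmwnr}(ii), use that a normal tensor (or matrix) commutes with its Moore--Penrose inverse, and translate back with Theorem~\ref{thm25} (equivalently, Corollary~\ref{cor:thm25.1}); the corollary is then just the order-one (matrix) specialization of Theorem~\ref{thm26}. Your verification of the normal-implies-EP step through the identities $\tilde{\mc{A}}^{\dg}=\tilde{\mc{A}}^H\n(\tilde{\mc{A}}\n\tilde{\mc{A}}^H)^{\dg}=(\tilde{\mc{A}}^H\n\tilde{\mc{A}})^{\dg}\n\tilde{\mc{A}}^H$ is sound and adds nothing beyond standard facts, so no gap remains.
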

\section{Weighted tensor norm}\label{sec:wtnsrnorm}
For two Hermitian positive definite tensors $\mc{M}\in\mb{C}^{I_{1\ldots M}\times I_{1\ldots M}}$ and $\mc{N}\in\mb{C}^{J_{1\ldots N}\times J_{1\ldots N}}$, we define the weighted inner product and their induced weighted tensor norms here. The weighted inner products in $\mb{C}^{I_{1\ldots M}}$ and $\mb{C}^{J_{1\ldots N}}$ are  $$\left\langle\mc{X},\mc{Y}\right\rangle_\mc{M}=\left\langle\mc{M}\m\mc{X},\mc{Y}\right\rangle,~\mc{X},\mc{Y}\in \mb{C}^{I_{1\ldots M}}$$ and $$\left\langle\mc{X},\mc{Y}\right\rangle_\mc{N}=\left\langle\mc{N}\n\mc{X},\mc{Y}\right\rangle,~\mc{X},\mc{Y}\in \mb{C}^{J_{1\ldots N}}, $$ respectively.
Then, their induced weighted tensor norms are 
$$\|\mc{X}\|_\mc{M}=\sqrt{\left\langle\mc{X},\mc{X}\right\rangle_\mc{M}},~ \mc{X}\in\mb{C}^{I_{1\ldots M}}$$ and $$\|\mc{X}\|_\mc{N}=\sqrt{\left\langle\mc{X},\mc{X}\right\rangle_\mc{N}},~ \mc{X}\in\mb{C}^{J_{1\ldots N}},$$ respectively.  
                  \begin{lemma}\label{lem6}
For $\mc{W}\in \mathbb{C}^{I_{1\ldots M}\times J_{1\ldots N}}$, 
$\mc{X}\in\mb{C}^{J_{1\ldots N}}$,
and $\mc{Y}\in\mb{C}^{I_{1\ldots M}}$, we have $$\left\langle\mc{W}\n\mc{X},\mc{Y}\right\rangle=\left\langle\mc{X},\mc{W}^H\m\mc{Y}\right\rangle.$$
\end{lemma}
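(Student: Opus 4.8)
The plan is to unravel the definitions of the Einstein product and the conjugate transpose on both sides of the claimed identity, and check that they agree entry by entry. Write $\mc{W}=(w_{i_1\ldots i_M k_1\ldots k_N})$, so that $\mc{W}^H=(\overline{w}_{i_1\ldots i_M k_1\ldots k_N})$ reindexed appropriately, i.e.\ $(\mc{W}^H)_{k_1\ldots k_N i_1\ldots i_M}=\overline{w_{i_1\ldots i_M k_1\ldots k_N}}$. Recall also that for $\mc{P},\mc{Q}\in\mb{C}^{I_{1\ldots M}}$ the inner product is $\left\langle\mc{P},\mc{Q}\right\rangle=\mc{Q}^H\n\mc{P}=\sum_{i_1,\ldots,i_M}p_{i_1\ldots i_M}\overline{q_{i_1\ldots i_M}}$, and similarly over $\mb{C}^{J_{1\ldots N}}$.

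First I would expand the left-hand side: $\left\langle\mc{W}\n\mc{X},\mc{Y}\right\rangle=\sum_{i_1,\ldots,i_M}(\mc{W}\n\mc{X})_{i_1\ldots i_M}\,\overline{y_{i_1\ldots i_M}}=\sum_{i_1,\ldots,i_M}\Big(\sum_{k_1,\ldots,k_N}w_{i_1\ldots i_M k_1\ldots k_N}\,x_{k_1\ldots k_N}\Big)\overline{y_{i_1\ldots i_M}}$. Then I would expand the right-hand side: $\left\langle\mc{X},\mc{W}^H\m\mc{Y}\right\rangle=\sum_{k_1,\ldots,k_N}x_{k_1\ldots k_N}\,\overline{(\mc{W}^H\m\mc{Y})_{k_1\ldots k_N}}=\sum_{k_1,\ldots,k_N}x_{k_1\ldots k_N}\,\overline{\sum_{i_1,\ldots,i_M}\overline{w_{i_1\ldots i_M k_1\ldots k_N}}\,y_{i_1\ldots i_M}}=\sum_{k_1,\ldots,k_N}x_{k_1\ldots k_N}\sum_{i_1,\ldots,i_M}w_{i_1\ldots i_M k_1\ldots k_N}\,\overline{y_{i_1\ldots i_M}}$. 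Interchanging the (finite) double sums shows the two expressions coincide, which proves the lemma.

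There is essentially no obstacle here; the only point requiring a little care is bookkeeping the index ordering in the definition of $\mc{W}^H$ and making sure the complex conjugation in the second slot of the inner product cancels the conjugation built into $\mc{W}^H$ correctly. One could alternatively give a one-line proof via the reshaping operator: by Lemma \ref{lem:rsh EP prop}, $\textnormal{rsh}(\mc{W}\n\mc{X})=W x$ where $W=\textnormal{rsh}(\mc{W})$ and $x=\textnormal{rsh}(\mc{X})$ are the associated matrix and vector, and $\textnormal{rsh}(\mc{W}^H)=W^{*}$; then $\left\langle\mc{W}\n\mc{X},\mc{Y}\right\rangle=y^{*}Wx=(W^{*}y)^{*}x=\left\langle\mc{X},\mc{W}^H\m\mc{Y}\right\rangle$, using that $\textnormal{rsh}$ preserves the inner product. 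I would likely present the direct entrywise computation as the main proof since it is self-contained, and perhaps remark on the reshaping shortcut.
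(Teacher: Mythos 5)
Your proof is correct and is essentially the standard verification of this lemma: expanding both sides via the definitions of the Einstein product, the inner product $\left\langle\mc{P},\mc{Q}\right\rangle=\mc{Q}^{H}*\mc{P}$, and the conjugate transpose, then interchanging the finite sums. The bookkeeping of indices and conjugations is handled correctly, and your remark that the identity also follows in one line through the reshaping isomorphism (equivalently, through $(\mc{W}\n\mc{X})^{H}=\mc{X}^{H}\n\mc{W}^{H}$) matches the intended content of the lemma.
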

From Lemma \ref{lem6}, the next result is easy to deduce.
                 \begin{lemma}\label{lem7}
Let $ \mc{X}\in\mb{C}^{I_{1\ldots M}}$ and $ \mc{Y}\in\mb{C}^{J_{1\ldots N}}$. If $\mc{M}\in\mb{C}^{I_{1\ldots M}\times I_{1\ldots M}}$ and $\mc{N}\in\mb{C}^{J_{1\ldots N}\times J_{1\ldots N}}$ are two Hermitian positive definite tensors, then
                   \begin{enumerate}[(i)]
    \item $\|\mc{X}\|_\mc{M}=\|\mc{M}^{1/2}\m\mc{X}\|$;
    \item $\|\mc{X}\|_\mc{N}=\|\mc{N}^{1/2}\n\mc{X}\|$.
\end{enumerate}
\end{lemma}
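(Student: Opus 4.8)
The plan is to reduce both assertions to the definition of the induced weighted norm and to part (i) of Lemma~\ref{lem7} applied to the map $\mc{X}\mapsto\mc{M}^{1/2}\m\mc{X}$, which in turn rests on Lemma~\ref{lem6}. Concretely, for (i) I would start from $\|\mc{X}\|_\mc{M}^2=\langle\mc{X},\mc{X}\rangle_\mc{M}=\langle\mc{M}\m\mc{X},\mc{X}\rangle$, then write $\mc{M}=\mc{M}^{1/2}\m\mc{M}^{1/2}$ (legitimate since $\mc{M}$ is Hermitian positive definite, so it has a Hermitian positive definite square root $\mc{M}^{1/2}$ satisfying $(\mc{M}^{1/2})^H=\mc{M}^{1/2}$), obtaining $\langle\mc{M}^{1/2}\m(\mc{M}^{1/2}\m\mc{X}),\mc{X}\rangle$. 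Applying Lemma~\ref{lem6} with $\mc{W}=\mc{M}^{1/2}$, $\mc{Y}=\mc{X}$ moves the first $\mc{M}^{1/2}$ across the inner product to give $\langle\mc{M}^{1/2}\m\mc{X},(\mc{M}^{1/2})^H\m\mc{X}\rangle=\langle\mc{M}^{1/2}\m\mc{X},\mc{M}^{1/2}\m\mc{X}\rangle=\|\mc{M}^{1/2}\m\mc{X}\|^2$, and taking positive square roots finishes (i). Part (ii) is word-for-word identical with $\mc{N}$, $\n$, and $\mc{N}^{1/2}$ in place of $\mc{M}$, $\m$, $\mc{M}^{1/2}$.

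The key steps in order: (1) invoke the existence of a Hermitian positive definite square root $\mc{M}^{1/2}$ of $\mc{M}$ (this is already implicitly used in Theorem~\ref{thmwmpi} and elsewhere in the paper, so I would simply cite it), noting in particular the associativity of the Einstein product so that $\mc{M}\m\mc{X}=\mc{M}^{1/2}\m(\mc{M}^{1/2}\m\mc{X})$; (2) expand $\|\mc{X}\|_\mc{M}^2$ via the definitions of the weighted inner product and the ordinary inner product on $\mb{C}^{I_{1\ldots M}}$; (3) substitute the factorization and apply Lemma~\ref{lem6}; (4) recognize the resulting expression as $\|\mc{M}^{1/2}\m\mc{X}\|^2$ and take square roots; (5) repeat verbatim for (ii). No genuine obstacle is expected here—the statement is essentially a bookkeeping consequence of Lemma~\ref{lem6} and the self-adjointness of $\mc{M}^{1/2}$; the only point that needs a word of justification is that $\mc{M}$ being Hermitian positive definite guarantees a Hermitian (hence self-conjugate-transpose) square root, so that $(\mc{M}^{1/2})^H\m\mc{X}=\mc{M}^{1/2}\m\mc{X}$ in step (3).

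If one wanted to avoid even citing the square root, an alternative is to first establish, as a one-line lemma, that $\langle\mc{M}\m\mc{X},\mc{X}\rangle=\langle\mc{M}^{1/2}\m\mc{X},\mc{M}^{1/2}\m\mc{X}\rangle$ directly from Hermitian-ness of $\mc{M}^{1/2}$, but since the paper already freely uses $\mc{M}^{1/2}$ and $\mc{N}^{1/2}$ in Section~\ref{sec:WSVD}, the cleanest exposition is the short computation above. I would therefore present the proof as two short displayed chains of equalities (one per item), each about three lines, with Lemma~\ref{lem6} cited at the single step where the transpose is moved across.
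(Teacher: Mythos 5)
Your proposal is correct and follows essentially the same route the paper intends: expand $\|\mc{X}\|_{\mc{M}}^2=\langle\mc{M}\m\mc{X},\mc{X}\rangle$, factor $\mc{M}=\mc{M}^{1/2}\m\mc{M}^{1/2}$ using the Hermitian positive definite square root already employed throughout the paper, and move one factor across the inner product via Lemma~\ref{lem6}, which is exactly the deduction the paper flags as "easy to deduce from Lemma~\ref{lem6}." The self-adjointness remark $(\mc{M}^{1/2})^H=\mc{M}^{1/2}$ and the identical treatment of (ii) are all that is needed, so the proof is complete.
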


Let $\mc{X},\mc{Y} \in\mb{C}^{I_{1\ldots M}}$ with a Hermitian positive definite tensor  $\mc{M}\in\mb{C}^{I_{1\ldots M}\times I_{1\ldots M}}$. Then, $\mc{X} ~\textnormal{and}~\mc{Y}$ are called $\mc{M}$-orthogonal if $\left\langle\mc{X},\mc{Y}\right\rangle_M=0$. Next, we prove the weighted Pythagorean theorem for tensors. 
                   
                    \begin{theorem}\label{thm:W Pyth tnsr}
Let $\mc{X}, \mc{Y} \in\mb{C}^{I_{1\ldots M}}$ be $\mc{M}$-orthogonal. Then, 
$$ \|\mc{X}+\mc{Y}\|_\mc{M}^2=\|\mc{X}\|_\mc{M}^2+\|\mc{Y}\|_\mc{M}^2.$$
\end{theorem}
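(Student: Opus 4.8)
The plan is to reduce the weighted statement to the classical Pythagorean identity; I will describe a direct expansion of the squared weighted norm as the main line of argument, and indicate a shortcut via Lemma \ref{lem7} at the end.

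First I would write $\|\mc{X}+\mc{Y}\|_\mc{M}^2=\left\langle\mc{X}+\mc{Y},\mc{X}+\mc{Y}\right\rangle_\mc{M}=\left\langle\mc{M}\m(\mc{X}+\mc{Y}),\mc{X}+\mc{Y}\right\rangle$, and then use the left-distributivity of the Einstein product together with the sesquilinearity of the underlying inner product $\left\langle\mc{X},\mc{Y}\right\rangle=\mc{Y}^H\n\mc{X}$ to split this expression into the four terms $\left\langle\mc{X},\mc{X}\right\rangle_\mc{M}+\left\langle\mc{X},\mc{Y}\right\rangle_\mc{M}+\left\langle\mc{Y},\mc{X}\right\rangle_\mc{M}+\left\langle\mc{Y},\mc{Y}\right\rangle_\mc{M}$. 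The $\mc{M}$-orthogonality hypothesis kills the term $\left\langle\mc{X},\mc{Y}\right\rangle_\mc{M}$ at once. For the remaining cross term I would note that, since $\mc{M}$ is Hermitian, the weighted inner product is conjugate-symmetric: $\left\langle\mc{Y},\mc{X}\right\rangle_\mc{M}=\left\langle\mc{M}\m\mc{Y},\mc{X}\right\rangle=\overline{\left\langle\mc{M}\m\mc{X},\mc{Y}\right\rangle}=\overline{\left\langle\mc{X},\mc{Y}\right\rangle_\mc{M}}=0$. What is left is exactly $\|\mc{X}\|_\mc{M}^2+\|\mc{Y}\|_\mc{M}^2$, which finishes the proof.

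I do not expect a genuine obstacle: the statement is just the tensor transcription of the classical Pythagorean theorem, and every ingredient — distributivity of $\m$, linearity of $\left\langle\cdot,\cdot\right\rangle$ in the first slot and conjugate-linearity in the second, and the Hermitian symmetry $\mc{M}=\mc{M}^H$ — is immediate from the definitions in Sections \ref{sec:preliminaries} and \ref{sec:wtnsrnorm}. The only point requiring a little care is the bookkeeping of the complex conjugate when moving the weight $\mc{M}$ from one argument of $\left\langle\cdot,\cdot\right\rangle$ to the other; positive-definiteness of $\mc{M}$ is not needed for the identity itself, only to guarantee that $\|\cdot\|_\mc{M}$ is a bona fide norm. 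Alternatively, one can bypass this conjugation bookkeeping entirely by invoking Lemma \ref{lem7} to write $\|\mc{X}+\mc{Y}\|_\mc{M}=\|\mc{M}^{1/2}\m\mc{X}+\mc{M}^{1/2}\m\mc{Y}\|$, observing that $\mc{M}$-orthogonality of $\mc{X}$ and $\mc{Y}$ amounts to ordinary orthogonality of $\mc{M}^{1/2}\m\mc{X}$ and $\mc{M}^{1/2}\m\mc{Y}$ (because $\mc{M}^{1/2}$ is Hermitian), and then applying the unweighted Pythagorean identity for the norm induced by $\left\langle\cdot,\cdot\right\rangle$.
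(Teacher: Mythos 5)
Your proposal is correct and follows the standard route one expects here: expand $\left\langle\mc{X}+\mc{Y},\mc{X}+\mc{Y}\right\rangle_{\mc{M}}$ by sesquilinearity, kill the cross terms using $\mc{M}$-orthogonality together with the conjugate symmetry $\left\langle\mc{Y},\mc{X}\right\rangle_{\mc{M}}=\overline{\left\langle\mc{X},\mc{Y}\right\rangle_{\mc{M}}}$ coming from $\mc{M}=\mc{M}^{H}$, which is essentially the paper's argument. The alternative reduction via Lemma \ref{lem7} to the unweighted Pythagorean identity for $\mc{M}^{1/2}\m\mc{X}$ and $\mc{M}^{1/2}\m\mc{Y}$ is also valid and adds nothing problematic.
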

For a tensor $\mc{A}\in \mb{C}^{I_{1\ldots M}\times J_{1\ldots N}}$, we define the tensor norm as: 
\begin{equation}\label{eqn:tnsr norm}
    \|\mc{A}\|=\textnormal{sup} \{\|\mc{A}\n\mc{X}\| ~:~    \|\mc{X}\|=1, ~\mc{X}\in\mb{C}^{J_{1\ldots N}} \}.
\end{equation}
By performing the steps of the existing proofs for matrices, we can verify the following.
\begin{enumerate}[(i)] 
    \item $\|\mc{A}\|=\|\mc{A}\|_2$, where $\|.\|_2$ is the spectral norm of $\mc{A}.$
    \item $\|\mc{A}\n\mc{X}\|\leq \|\mc{A}\| \|\mc{X}\|.$
    \item $\|\mc{A}\n\mc{B}\|\leq \|\mc{A}\| \|\mc{B}\|$, where $\mc{A}\in \mb{C}^{I_{1\ldots M}\times J_{1\ldots N}}$ and $\mc{B}\in \mb{C}^{J_{1\ldots N}\times K_{1\ldots L}}.$
    \item $\|\mc{A}\|=\|\mc{A}^H\|$.
    \item $\|\mc{A}^H\n\mc{A}\|=\|\mc{A}\|^2$.
\end{enumerate}

Now, for tensors $\mc{A}\in \mb{C}^{I_{1\ldots M}\times J_{1\ldots N}}$ and $\mc{B}\in \mb{C}^{J_{1\ldots N}\times I_{1\ldots M}}$ with two Hermitian positive definite tensors $\mc{M}\in\mb{C}^{I_{1\ldots M}\times I_{1\ldots M}}$ and $\mc{N}\in\mb{C}^{J_{1\ldots N}\times J_{1\ldots N}}$, we define the weighted tensor norms as:
                   \begin{equation}\label{eqn25}
                     \|\mc{A}\|_{\mc{M}\mc{N}}=\textnormal{sup} \{\|\mc{A}\n\mc{X}\|_\mc{M} ~:~    \|\mc{X}\|_\mc{N}=1, ~\mc{X}\in\mb{C}^{J_{1\ldots N}} \}  
\end{equation} 
and
                     \begin{equation}\label{eqn26}
                \|\mc{B}\|_{\mc{N}\mc{M}}=\textnormal{sup} \{\|\mc{B}\m\mc{X}\|_\mc{N} ~:~ \|\mc{X}\|_\mc{M}=1,~\mc{X}\in \mb{C}^{I_{1\ldots M}}\}.
\end{equation}
The following result provides a relation between the weighted tensor norm and the tensor norm.
                  \begin{lemma}\label{lem5}
Let $\mc{A}\in \mb{C}^{I_{1\ldots M}\times J_{1\ldots N}}$ and $\mc{B}\in \mb{C}^{J_{1\ldots N}\times I_{1\ldots M}}$. If $\mc{M}\in\mb{C}^{I_{1\ldots M}\times I_{1\ldots M}}$ and $\mc{N}\in\mb{C}^{J_{1\ldots N}\times J_{1\ldots N}}$ are two Hermitian positive definite tensors, then
                 \begin{enumerate}[(i)]
     \item $\|\mc{A}\|_{\mc{M}\mc{N}}= \|\mc{M}^{1/2}\m\mc{A}\n\mc{N}^{-1/2}\|;$
    \item $ \|\mc{B}\|_{\mc{N}\mc{M}}= \|\mc{N}^{1/2}\n\mc{B}\m\mc{M}^{-1/2}\|.$ 
\end{enumerate}
\end{lemma}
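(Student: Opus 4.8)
The plan is to reduce both weighted-norm identities to the unweighted tensor norm \eqref{eqn:tnsr norm} by a change of variables, exactly mirroring the matrix argument. I will prove (i) in detail and indicate that (ii) follows by the same reasoning with the roles of $\mc{M}$ and $\mc{N}$ interchanged. First I would start from the definition \eqref{eqn25}, $\|\mc{A}\|_{\mc{M}\mc{N}}=\sup\{\|\mc{A}\n\mc{X}\|_{\mc{M}} : \|\mc{X}\|_{\mc{N}}=1,\ \mc{X}\in\mb{C}^{J_{1\ldots N}}\}$, and apply Lemma \ref{lem7} to both norms: the outer norm becomes $\|\mc{A}\n\mc{X}\|_{\mc{M}}=\|\mc{M}^{1/2}\m\mc{A}\n\mc{X}\|$ and the constraint becomes $\|\mc{N}^{1/2}\n\mc{X}\|=1$.

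Next I would substitute $\mc{Y}=\mc{N}^{1/2}\n\mc{X}$, so that $\mc{X}=\mc{N}^{-1/2}\n\mc{Y}$ (here I use that $\mc{N}$ is Hermitian positive definite, hence $\mc{N}^{1/2}$ is invertible with inverse $\mc{N}^{-1/2}$, and that the Einstein product is associative). As $\mc{X}$ ranges over all tensors in $\mb{C}^{J_{1\ldots N}}$ with $\|\mc{N}^{1/2}\n\mc{X}\|=1$, the tensor $\mc{Y}$ ranges over all unit tensors in $\mb{C}^{J_{1\ldots N}}$. Under this substitution $\mc{M}^{1/2}\m\mc{A}\n\mc{X}=\mc{M}^{1/2}\m\mc{A}\n\mc{N}^{-1/2}\n\mc{Y}=(\mc{M}^{1/2}\m\mc{A}\n\mc{N}^{-1/2})\n\mc{Y}$, so
\[
\|\mc{A}\|_{\mc{M}\mc{N}}=\sup\{\|(\mc{M}^{1/2}\m\mc{A}\n\mc{N}^{-1/2})\n\mc{Y}\| : \|\mc{Y}\|=1,\ \mc{Y}\in\mb{C}^{J_{1\ldots N}}\}=\|\mc{M}^{1/2}\m\mc{A}\n\mc{N}^{-1/2}\|,
\]
the last equality being precisely \eqref{eqn:tnsr norm} applied to the tensor $\mc{M}^{1/2}\m\mc{A}\n\mc{N}^{-1/2}\in\mb{C}^{I_{1\ldots M}\times J_{1\ldots N}}$. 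This proves (i). For (ii), the identical argument starting from \eqref{eqn26} with the outer norm $\|\mc{B}\m\mc{X}\|_{\mc{N}}=\|\mc{N}^{1/2}\n\mc{B}\m\mc{X}\|$ and the substitution $\mc{Y}=\mc{M}^{1/2}\m\mc{X}$ yields $\|\mc{B}\|_{\mc{N}\mc{M}}=\|\mc{N}^{1/2}\n\mc{B}\m\mc{M}^{-1/2}\|$.

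The argument is essentially bookkeeping, so there is no deep obstacle; the only points needing care are (a) checking that the substitution $\mc{Y}=\mc{N}^{1/2}\n\mc{X}$ is a bijection of the relevant index sets — this is where Hermitian positive definiteness of $\mc{N}$ (invertibility of $\mc{N}^{1/2}$) is used — and (b) keeping the two different Einstein products $\m$ and $\n$ straight so that the associativity rearrangements are legitimate, since $\mc{M}^{1/2}$ multiplies on the $I_{1\ldots M}$ side and $\mc{N}^{-1/2}$ on the $J_{1\ldots N}$ side. One should also note that Lemma \ref{lem7} is stated for norms of single tensors, which is exactly what is needed after specializing $\mc{X}\mapsto\mc{A}\n\mc{X}$ in its part (i) and $\mc{X}$ itself in its part (ii).
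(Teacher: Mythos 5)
Your proof is correct and follows the same route the paper takes: reduce the weighted norms to the unweighted norm \eqref{eqn:tnsr norm} via Lemma \ref{lem7} and the invertible change of variables $\mc{Y}=\mc{N}^{1/2}\n\mc{X}$ (resp. $\mc{Y}=\mc{M}^{1/2}\m\mc{X}$), so that the supremum is recognized as the spectral norm of $\mc{M}^{1/2}\m\mc{A}\n\mc{N}^{-1/2}$ (resp. $\mc{N}^{1/2}\n\mc{B}\m\mc{M}^{-1/2}$). The bookkeeping points you flag (bijectivity of the substitution from positive definiteness, and associativity across the two Einstein products) are exactly the ones that matter, and they are handled correctly.
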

The following lemma shows the consistent property of the weighted tensor norm.
            \begin{lemma}\label{lem9}
Let $\mc{A}\in \mb{C}^{I_{1\ldots M}\times J_{1\ldots N}}$, $\mc{B}\in \mb{C}^{J_{1\ldots N}\times I_{1\ldots M}}$, $\mc{X} \in\mb{C}^{J_{1\ldots N}}$, and $\mc{Y} \in\mb{C}^{I_{1\ldots M}}$. If $\mc{M}\in\mb{C}^{I_{1\ldots M}\times I_{1\ldots M}}$ and $\mc{N}\in\mb{C}^{J_{1\ldots N}\times J_{1\ldots N}}$ are two Hermitian positive definite tensors, then
                  \begin{enumerate}[(i)]
    \item $\|\mc{A}\n\mc{X}\|_\mc{M}\leq\|\mc{A}\|_{\mc{M}\mc{N}} ~\|\mc{X}\|_N;$
    \item $\|\mc{B}\m\mc{Y}\|_\mc{N}\leq\|\mc{B}\|_{\mc{N}\mc{M}} ~\|\mc{Y}\|_\mc{M};$
    \item $\|\mc{A}\n\mc{B}\|_{\mc{M}\mc{M}}\leq\|\mc{A}\|_{\mc{M}\mc{N}}~\|\mc{B}\|_{\mc{N}\mc{M}}.$
\end{enumerate}
\end{lemma}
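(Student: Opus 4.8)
The plan is to reduce everything to the unweighted operator norm inequalities (ii) and (iii) listed just before the statement, using Lemma \ref{lem5} to translate between the weighted and unweighted norms and Lemma \ref{lem7} to do the same for vectors. First I would prove (i). Given $\mc{X}\in\mb{C}^{J_{1\ldots N}}$, set $\mc{Y}=\mc{A}\n\mc{X}\in\mb{C}^{I_{1\ldots M}}$. By Lemma \ref{lem7}(i) we have $\|\mc{Y}\|_\mc{M}=\|\mc{M}^{1/2}\m\mc{Y}\|=\|\mc{M}^{1/2}\m\mc{A}\n\mc{X}\|$, and inserting $\mc{N}^{-1/2}\n\mc{N}^{1/2}=\mc{I}_2$ between $\mc{A}$ and $\mc{X}$ gives $\|\mc{M}^{1/2}\m\mc{A}\n\mc{N}^{-1/2}\n(\mc{N}^{1/2}\n\mc{X})\|$. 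Now apply the unweighted consistency inequality (ii) above to the tensor $\mc{M}^{1/2}\m\mc{A}\n\mc{N}^{-1/2}$ and the vector $\mc{N}^{1/2}\n\mc{X}$ to bound this by $\|\mc{M}^{1/2}\m\mc{A}\n\mc{N}^{-1/2}\|\,\|\mc{N}^{1/2}\n\mc{X}\|$, which by Lemma \ref{lem5}(i) and Lemma \ref{lem7}(ii) equals $\|\mc{A}\|_{\mc{M}\mc{N}}\,\|\mc{X}\|_\mc{N}$. This proves (i), and (ii) follows by the identical argument with the roles of $\mc{M}$ and $\mc{N}$ interchanged and the product $\m$ replaced by $\n$ (using Lemma \ref{lem5}(ii) in place of (i)).

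For part (iii), the cleanest route is to combine (i) and (ii): for any unit tensor $\mc{X}\in\mb{C}^{I_{1\ldots M}}$ (with respect to $\|\cdot\|_\mc{M}$), apply (ii) to $\mc{B}\m\mc{X}$ to get $\|\mc{B}\m\mc{X}\|_\mc{N}\le\|\mc{B}\|_{\mc{N}\mc{M}}\,\|\mc{X}\|_\mc{M}=\|\mc{B}\|_{\mc{N}\mc{M}}$, and then apply (i) to the tensor $\mc{A}$ and the vector $\mc{B}\m\mc{X}\in\mb{C}^{J_{1\ldots N}}$ to get $\|\mc{A}\n(\mc{B}\m\mc{X})\|_\mc{M}\le\|\mc{A}\|_{\mc{M}\mc{N}}\,\|\mc{B}\m\mc{X}\|_\mc{N}\le\|\mc{A}\|_{\mc{M}\mc{N}}\,\|\mc{B}\|_{\mc{N}\mc{M}}$. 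Since $\mc{A}\n\mc{B}\m\mc{X}=(\mc{A}\n\mc{B})\m\mc{X}$ by associativity of the Einstein product, taking the supremum over all $\mc{M}$-unit tensors $\mc{X}$ in the definition \eqref{eqn25} of $\|\mc{A}\n\mc{B}\|_{\mc{M}\mc{M}}$ yields exactly (iii). Alternatively one can prove (iii) directly by writing $\|\mc{A}\n\mc{B}\|_{\mc{M}\mc{M}}=\|\mc{M}^{1/2}\m\mc{A}\n\mc{B}\m\mc{M}^{-1/2}\|$ via Lemma \ref{lem5}(i), inserting $\mc{N}^{-1/2}\n\mc{N}^{1/2}$ between $\mc{A}$ and $\mc{B}$, and invoking the unweighted submultiplicativity (iii) above together with Lemma \ref{lem5}; I would present whichever is shorter.

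I do not expect a genuine obstacle here: the only care needed is bookkeeping the dimensions so that the inserted identity tensors $\mc{I}_1\in\mb{C}^{I_{1\ldots M}\times I_{1\ldots M}}$ and $\mc{I}_2\in\mb{C}^{J_{1\ldots N}\times J_{1\ldots N}}$ are the right ones, and making sure each application of Lemma \ref{lem7} uses the correctly-sized weight ($\mc{M}$ on $\mb{C}^{I_{1\ldots M}}$, $\mc{N}$ on $\mb{C}^{J_{1\ldots N}}$). The mildly delicate point worth stating explicitly is that $\mc{M}^{1/2}$ and $\mc{N}^{1/2}$ are themselves Hermitian positive definite (hence invertible) tensors, which is why $\mc{N}^{-1/2}\n\mc{N}^{1/2}=\mc{I}_2$ and $\mc{M}^{-1/2}\m\mc{M}^{1/2}=\mc{I}_1$ are legitimate; this is already implicit in the statements of Theorem \ref{thm:WSVD} and Lemma \ref{lem5}. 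Everything else is a routine transcription of the classical matrix argument, which the paper explicitly sanctions in the sentence preceding the list of unweighted norm properties.
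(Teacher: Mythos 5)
Your argument is correct, and it stays entirely within the toolkit the paper sets up immediately before the lemma (Lemma \ref{lem7}, Lemma \ref{lem5}, and the unweighted norm properties), so it is essentially the intended proof. The only remark worth making is that parts (i) and (ii) also follow even more directly from the definitions \eqref{eqn25}--\eqref{eqn26} by homogeneity: for $\mc{X}\neq\mc{O}$ the tensor $\mc{X}/\|\mc{X}\|_{\mc{N}}$ is a unit tensor in the $\mc{N}$-norm, so $\|\mc{A}\n\mc{X}\|_{\mc{M}}/\|\mc{X}\|_{\mc{N}}\leq\|\mc{A}\|_{\mc{M}\mc{N}}$ without any passage through the unweighted norm; your reduction via $\mc{M}^{1/2}$, $\mc{N}^{\pm 1/2}$ is equally valid and has the merit of making (iii) an immediate consequence, exactly as you carry it out by composing (i) and (ii) and taking the supremum over $\mc{M}$-unit tensors.
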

The following lemma comprises some properties of the weighted conjugate
transpose with the weighted tensor norm.
                    \begin{lemma}\label{lem10}
Let $\mc{A}\in \mb{C}^{I_{1\ldots M}\times J_{1\ldots N}}$. If $\mc{M}\in\mb{C}^{I_{1\ldots M}\times I_{1\ldots M}}$ and $\mc{N}\in\mb{C}^{J_{1\ldots N}\times J_{1\ldots N}}$ are two Hermitian positive definite tensors, then 
                   \begin{enumerate}[(i)]
    \item $\|\mc{A}\|_{\mc{M}\mc{N}}=\|\mc{A}^{\#}_{\mc{M}\mc{N}}\|_{\mc{N}\mc{M}}$;
    \item $\|\mc{A}\|_{\mc{M}\mc{N}}^2=\|\mc{A}\n\mc{A}^{\#}_{\mc{M}\mc{N}}\|_{\mc{M}\mc{M}}=\|\mc{A}^{\#}_{\mc{M}\mc{N}}\m\mc{A}\|_{\mc{N}\mc{N}}.$
\end{enumerate}
\end{lemma}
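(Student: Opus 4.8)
The plan is to reduce everything to the corresponding statements for the ordinary (unweighted) tensor norm, which have already been recorded just above the lemma, by using Lemma \ref{lem5} to convert weighted norms into unweighted norms of suitably transformed tensors, together with the explicit formula \eqref{eqnwct} for the weighted conjugate transpose. First I would set $\mc{A}_{\mc{M}\mc{N}}^{\#}=\mc{N}^{-1}\n\mc{A}^{H}\m\mc{M}$ and, for part (i), apply Lemma \ref{lem5}(ii) with $\mc{B}=\mc{A}_{\mc{M}\mc{N}}^{\#}$ to get $\|\mc{A}_{\mc{M}\mc{N}}^{\#}\|_{\mc{N}\mc{M}}=\|\mc{N}^{1/2}\n\mc{A}_{\mc{M}\mc{N}}^{\#}\m\mc{M}^{-1/2}\|$. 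Substituting the formula for $\mc{A}_{\mc{M}\mc{N}}^{\#}$ and using that $\mc{N}^{1/2}\n\mc{N}^{-1}=\mc{N}^{-1/2}$ and $\mc{M}\m\mc{M}^{-1/2}=\mc{M}^{1/2}$ (valid since $\mc{M},\mc{N}$ are Hermitian positive definite, so their powers commute in the Einstein-product sense via the reshaping of Lemma \ref{lem:rsh EP prop}), this collapses to $\|\mc{N}^{-1/2}\n\mc{A}^{H}\m\mc{M}^{1/2}\| = \|(\mc{M}^{1/2}\m\mc{A}\n\mc{N}^{-1/2})^{H}\|$. By property (iv) of the unweighted norm this equals $\|\mc{M}^{1/2}\m\mc{A}\n\mc{N}^{-1/2}\|$, which by Lemma \ref{lem5}(i) is precisely $\|\mc{A}\|_{\mc{M}\mc{N}}$. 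This proves (i).

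For part (ii), I would proceed similarly. Using Lemma \ref{lem5}(i) on the product $\mc{A}\n\mc{A}_{\mc{M}\mc{N}}^{\#}\in\mb{C}^{I_{1\ldots M}\times I_{1\ldots M}}$ (noting the relevant weight pair is $(\mc{M},\mc{M})$), we have $\|\mc{A}\n\mc{A}_{\mc{M}\mc{N}}^{\#}\|_{\mc{M}\mc{M}}=\|\mc{M}^{1/2}\m\mc{A}\n\mc{A}_{\mc{M}\mc{N}}^{\#}\m\mc{M}^{-1/2}\|$. Inserting $\mc{A}_{\mc{M}\mc{N}}^{\#}=\mc{N}^{-1}\n\mc{A}^{H}\m\mc{M}$ and simplifying $\mc{M}\m\mc{M}^{-1/2}=\mc{M}^{1/2}$, and inserting $\mc{I}_2=\mc{N}^{-1/2}\n\mc{N}^{1/2}$ between $\mc{A}$ and $\mc{N}^{-1}$, the argument becomes $\mc{M}^{1/2}\m\mc{A}\n\mc{N}^{-1/2}\n\mc{N}^{-1/2}\n\mc{A}^{H}\m\mc{M}^{1/2}$. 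Writing $\mc{C}=\mc{M}^{1/2}\m\mc{A}\n\mc{N}^{-1/2}$ and using $(\mc{N}^{-1/2})^{H}=\mc{N}^{-1/2}$, $(\mc{M}^{1/2})^{H}=\mc{M}^{1/2}$, this is exactly $\mc{C}\n\mc{C}^{H}$, so $\|\mc{A}\n\mc{A}_{\mc{M}\mc{N}}^{\#}\|_{\mc{M}\mc{M}}=\|\mc{C}\n\mc{C}^{H}\|=\|\mc{C}^{H}\n\mc{C}\|=\|\mc{C}\|^{2}$ by properties (v) and (iv) of the unweighted norm. Since $\|\mc{C}\|=\|\mc{M}^{1/2}\m\mc{A}\n\mc{N}^{-1/2}\|=\|\mc{A}\|_{\mc{M}\mc{N}}$ by Lemma \ref{lem5}(i), this gives $\|\mc{A}\n\mc{A}_{\mc{M}\mc{N}}^{\#}\|_{\mc{M}\mc{M}}=\|\mc{A}\|_{\mc{M}\mc{N}}^{2}$. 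The identity $\|\mc{A}_{\mc{M}\mc{N}}^{\#}\m\mc{A}\|_{\mc{N}\mc{N}}=\|\mc{A}\|_{\mc{M}\mc{N}}^{2}$ follows by the mirror computation, using Lemma \ref{lem5}(ii) on $\mc{A}_{\mc{M}\mc{N}}^{\#}\m\mc{A}\in\mb{C}^{J_{1\ldots N}\times J_{1\ldots N}}$ with weight pair $(\mc{N},\mc{N})$, which reduces to $\|\mc{C}^{H}\n\mc{C}\|=\|\mc{C}\|^{2}$ again.

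The main obstacle I anticipate is purely bookkeeping: one must be careful that $\mc{M}^{1/2},\mc{M}^{-1/2},\mc{M}^{-1}$ (and the $\mc{N}$-analogues) interact correctly under the Einstein product, i.e.\ that $\mc{M}^{1/2}\m\mc{M}^{1/2}=\mc{M}$, $\mc{M}^{1/2}\m\mc{M}^{-1/2}=\mc{I}_1$, $(\mc{M}^{1/2})^{H}=\mc{M}^{1/2}$, and that all these powers of a single Hermitian positive definite tensor commute with one another. This is most cleanly justified by reshaping to matrices via Lemma \ref{lem:rsh EP prop}, where these are the standard facts about the Hermitian positive definite matrix $M=\textnormal{rsh}(\mc{M})$, and then reshaping back. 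Once this algebra of weight powers is set up, both parts are short consequences of Lemma \ref{lem5} and the five listed properties of the unweighted tensor norm.
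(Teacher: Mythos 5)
Your proof is correct, and it follows the route the paper's machinery is built for: using Lemma \ref{lem5} to rewrite each weighted norm as the spectral norm of $\mc{M}^{1/2}\m\mc{A}\n\mc{N}^{-1/2}$ (and its conjugate transpose and Gram products), then invoking the listed properties (iv) and (v) of the unweighted tensor norm. The only point to state explicitly is the reverse-order law $(\mc{X}\n\mc{Y})^{H}=\mc{Y}^{H}\n\mc{X}^{H}$ and the compatibility of the fractional powers of $\mc{M},\mc{N}$ with the Einstein product, both of which you correctly justify by reshaping via Lemma \ref{lem:rsh EP prop}.
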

The next result defines the weighted tensor norm as the maximum $(\mc{M},\mc{N})$ singular value of $\mc{A}.$ 
                 \begin{theorem}\label{Wnorm mu1}
Let $\mc{A}\in \mb{C}^{I_{1\ldots M}\times J_{1\ldots N}}$. If $\mc{M}\in \mb{C}^{I_{1\ldots M}\times I_{1\ldots M}}$ and $\mc{N}\in \mb{C}^{J_{1\ldots N}\times J_{1\ldots N}}$ are two Hermitian positive definite tensors, then 
$$\|\mc{A}\|_{\mc{M}\mc{N}}=\mu_{max} ~\textnormal{and}~\|\mc{A}_{\mc{M},\mc{N}}^{\dagger}\|_{\mc{N}\mc{M}}=\frac{1}{\mu_{min}},$$ where $\mu_{max}~\textnormal{and}~\mu_{min}$ are the maximum and minimum $(\mc{M},\mc{N})$ singular values of $\mc{A}.$ 
\end{theorem}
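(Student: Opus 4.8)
The plan is to strip off the weights by conjugating with their square roots, reducing both weighted norms to ordinary tensor norms, and then to read the two numbers off the WSVD. Put $\tilde{\mc{A}}=\mc{M}^{1/2}\m\mc{A}\n\mc{N}^{-1/2}$ as in Theorem~\ref{thmwmpi}. Lemma~\ref{lem5}(i) gives $\|\mc{A}\|_{\mc{M}\mc{N}}=\|\tilde{\mc{A}}\|$ at once, and Lemma~\ref{lem5}(ii) together with the representation $\mc{A}_{\mc{M},\mc{N}}^{\dagger}=\mc{N}^{-1/2}\n\tilde{\mc{A}}^{\dagger}\m\mc{M}^{1/2}$ of Theorem~\ref{thmwmpi} gives $\|\mc{A}_{\mc{M},\mc{N}}^{\dagger}\|_{\mc{N}\mc{M}}=\|\mc{N}^{1/2}\n\mc{A}_{\mc{M},\mc{N}}^{\dagger}\m\mc{M}^{-1/2}\|=\|\tilde{\mc{A}}^{\dagger}\|$, using $\mc{N}^{1/2}\n\mc{N}^{-1/2}=\mc{I}_2$ and $\mc{M}^{1/2}\m\mc{M}^{-1/2}=\mc{I}_1$. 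Since $\|\cdot\|$ is the spectral norm (property~(i) after \eqref{eqn:tnsr norm}), i.e. the largest singular value, it remains to identify the singular values of $\tilde{\mc{A}}$ and of $\tilde{\mc{A}}^{\dagger}$.

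For that I would take the WSVD $\mc{A}=\mc{U}\m\mc{S}\n\mc{V}^H$ from Theorem~\ref{thm:WSVD} and set $\mc{P}=\mc{M}^{1/2}\m\mc{U}$, $\mc{Q}=\mc{N}^{-1/2}\n\mc{V}$. Since $\mc{M}^{1/2},\mc{N}^{-1/2}$ are Hermitian and $\mc{U}^H\m\mc{M}\m\mc{U}=\mc{I}_1$, $\mc{V}^H\n\mc{N}^{-1}\n\mc{V}=\mc{I}_2$, a one-line computation yields $\mc{P}^H\m\mc{P}=\mc{I}_1$ and $\mc{Q}^H\n\mc{Q}=\mc{I}_2$; thus $\mc{P},\mc{Q}$ are unitary and $\tilde{\mc{A}}=\mc{P}\m\mc{S}\n\mc{Q}^H$ is an ordinary SVD of $\tilde{\mc{A}}$. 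Reshaping via Lemma~\ref{lem:rsh EP prop} and quoting the matrix fact that unitary equivalence preserves singular values, the singular values of $\tilde{\mc{A}}$ are exactly the nonzero entries $\mu_1\ge\cdots\ge\mu_r>0$ of $\mc{S}$, i.e. the $(\mc{M},\mc{N})$ singular values of $\mc{A}$; hence $\|\mc{A}\|_{\mc{M}\mc{N}}=\|\tilde{\mc{A}}\|=\mu_1=\mu_{max}$.

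Finally, applying Lemma~\ref{lem1} with identity weights to $\tilde{\mc{A}}=\mc{P}\m\mc{S}\n\mc{Q}^H$ (or verifying the four Penrose equations directly) gives $\tilde{\mc{A}}^{\dagger}=\mc{Q}\n\mc{S}^{\dagger}\m\mc{P}^H$, again an SVD, now with nonzero singular values $1/\mu_r\ge\cdots\ge1/\mu_1>0$; so $\|\mc{A}_{\mc{M},\mc{N}}^{\dagger}\|_{\mc{N}\mc{M}}=\|\tilde{\mc{A}}^{\dagger}\|=1/\mu_r=1/\mu_{min}$. There is no real analytic obstacle here; the points demanding care are bookkeeping ones — keeping the two Einstein products $\m$ and $\n$ (with their different numbers of contracted modes) in their correct slots throughout, and making sure ``unitary equivalence preserves singular values'' is legitimately transported from matrices to tensors, which the reshaping isometry of Lemma~\ref{lem:rsh EP prop} handles.
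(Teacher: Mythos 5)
Your proof is correct, and it follows essentially the route the paper takes: reduce both weighted norms to the ordinary spectral norm via Lemma~\ref{lem5} and Theorem~\ref{thmwmpi}, then read the extreme singular values off the WSVD of Theorem~\ref{thm:WSVD}, since $\tilde{\mc{A}}=\mc{P}\m\mc{S}\n\mc{Q}^H$ is an ordinary SVD and $\tilde{\mc{A}}^{\dagger}=\mc{Q}\n\mc{S}^{\dagger}\m\mc{P}^H$. Your bookkeeping on the shapes, the unitarity of $\mc{P},\mc{Q}$, and the transport of the matrix facts through the reshaping isometry is sound.
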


\section{Numerical range for the weighted Moore-Penrose inverse of an even-order square tensor}\label{sec:nrwmpi}  
In this section, we establish several properties of the numerical ranges of a tensor and its weighted Moore-Penrose inverse.
The first result conveys that the spectra of $\mc{A}$ and $\mc{A}_{\mc{M}, \mc{N}}^{\dg}$ as well as their numerical ranges simultaneously contain the origin.
                   \begin{theorem}\label{thm1}
Let $\mc{A}\in \mb{C}^{I_{1\ldots N}\times I_{1\ldots N}}$. If $\mc{M}, \mc{N} \in \mb{C}^{I_{1\ldots N}\times I_{1\ldots N}}$ are two Hermitian positive definite tensors, then 
                   \begin{enumerate}[(i)]
        \item $0\in\sigma{(\mc{A})}$ if and only if $0\in\sigma(\mc{A}_{\mc{M}, \mc{N}}^{\dg})$;
        \item $0\in W(\mc{A})$ if and only if $0\in W(\mc{A}_{\mc{M}, \mc{N}}^{\dg})$.
\end{enumerate}
\end{theorem}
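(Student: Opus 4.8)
The plan is to reduce both statements to corresponding facts about the auxiliary tensor $\tilde{\mc{A}}=\mc{M}^{1/2}\m\mc{A}\n\mc{N}^{-1/2}$ and its ordinary Moore--Penrose inverse $\tilde{\mc{A}}^{\dagger}$, using the factorization from Theorem \ref{thmwmpi}, namely $\mc{A}_{\mc{M},\mc{N}}^{\dg}=\mc{N}^{-1/2}\n\tilde{\mc{A}}^{\dagger}\m\mc{M}^{1/2}$.

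\emph{Part (i).} First I would observe that $0\in\sigma(\mc{A})$ is equivalent to $\mc{A}$ being singular, i.e. not invertible. Reshaping via Lemma \ref{lem:rsh EP prop}, invertibility of $\mc{A}$ corresponds to invertibility of the matrix $A=\textnormal{rsh}(\mc{A})$, and since $\mc{M}^{1/2}, \mc{N}^{-1/2}$ are invertible, $\tilde{\mc{A}}$ (equivalently $\tilde A = M^{1/2}AN^{-1/2}$) is singular if and only if $A$ is. Next, for a matrix (hence, after reshaping, for a tensor) the Moore--Penrose inverse is invertible exactly when the original is invertible, so $0\in\sigma(\tilde{\mc{A}})$ iff $0\in\sigma(\tilde{\mc{A}}^{\dagger})$. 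Finally, using $\mc{A}_{\mc{M},\mc{N}}^{\dg}=\mc{N}^{-1/2}\n\tilde{\mc{A}}^{\dagger}\m\mc{M}^{1/2}$ and again that the weight factors are invertible, $\mc{A}_{\mc{M},\mc{N}}^{\dg}$ is singular iff $\tilde{\mc{A}}^{\dagger}$ is. Chaining these equivalences gives $0\in\sigma(\mc{A})\iff 0\in\sigma(\mc{A}_{\mc{M},\mc{N}}^{\dg})$. Alternatively, one can argue directly from the WSVD: $0\notin\sigma(\mc{A})$ forces $r$ to be full (so $\mc{S}$ has no zero diagonal entry), and by Theorem \ref{thm:WMPI via WSVD} the same holds for $\mc{S}^{\dg}$, whence $\mc{A}_{\mc{M},\mc{N}}^{\dg}$ is invertible; the converse is symmetric.

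\emph{Part (ii).} The key equivalence is $0\in W(\mc{B})\iff \mc{B}$ is not invertible \emph{or} $\langle\mc{B}\n\mc{X},\mc{X}\rangle=0$ for some nonzero $\mc{X}$ — but more cleanly, I would use: $0\notin W(\mc{B})$ iff the numerical range is bounded away from the origin. The cleanest route is to show $0\in W(\mc{A})\iff 0\in W(\tilde{\mc{A}})$ and then $0\in W(\tilde{\mc{A}})\iff 0\in W(\tilde{\mc{A}}^{\dagger})$, and finally $0\in W(\tilde{\mc{A}}^{\dagger})\iff 0\in W(\mc{A}_{\mc{M},\mc{N}}^{\dg})$. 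For the middle step, recall (Theorem \ref{thm1} is preceded by the fact, and it is classical in the matrix case) that for any square tensor $\mc{C}$, $0\in W(\mc{C})\iff 0\in W(\mc{C}^{\dagger})$: indeed if $\mc{C}\n\mc{X}=\mc{O}$ for $\mc{X}\neq\mc{O}$ then $\mc{C}$ is not injective, and one checks $\mc{C}^{\dagger}$ is also not injective, forcing $0\in\sigma(\mc{C}^{\dagger})\subseteq W(\mc{C}^{\dagger})$; if instead $\mc{C}$ is injective but $\langle\mc{C}\n\mc{X},\mc{X}\rangle=0$ with $\|\mc{X}\|=1$, write $\mc{Y}=\mc{C}\n\mc{X}\neq\mc{O}$ so that $\mc{X}=\mc{C}^{\dagger}\n\mc{C}\n\mc{X}=\mc{C}^{\dagger}\n\mc{Y}$ when $\mc{C}$ has full column rank, giving $\langle\mc{C}^{\dagger}\n\mc{Y},\mc{Y}\rangle=\langle\mc{X},\mc{C}\n\mc{X}\rangle=\overline{\langle\mc{C}\n\mc{X},\mc{X}\rangle}=0$, hence $0\in W(\mc{C}^{\dagger})$ via \eqref{eqn:nraltdfn}; the converse uses $(\mc{C}^{\dagger})^{\dagger}=\mc{C}$ in the square invertible-on-its-range setting, or one repeats the argument symmetrically. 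For the outer steps, $0\in W(\mc{A})\iff 0\in W(\tilde{\mc{A}})$: if $\langle\mc{A}\n\mc{X},\mc{X}\rangle=0$, set $\mc{Z}=\mc{N}^{1/2}\n\mc{X}\neq\mc{O}$ and compute $\langle\tilde{\mc{A}}\n\mc{Z},\mc{Z}\rangle=\langle\mc{M}^{1/2}\m\mc{A}\n\mc{N}^{-1/2}\n\mc{N}^{1/2}\n\mc{X},\mc{N}^{1/2}\n\mc{X}\rangle$; using $\langle\mc{P}\n\mc{U},\mc{V}\rangle=\langle\mc{U},\mc{P}^H\m\mc{V}\rangle$ (Lemma \ref{lem6}) and $(\mc{M}^{1/2})^H=\mc{M}^{1/2}$, $\mc{N}^{1/2}\n\mc{N}^{1/2}$-type cancellations, this collapses to a nonzero scalar multiple of $\langle\mc{A}\n\mc{X},\mc{X}\rangle=0$, and the implication reverses because $\mc{M}^{1/2},\mc{N}^{1/2}$ are invertible. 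The same congruence-type computation handles $0\in W(\tilde{\mc{A}}^{\dagger})\iff 0\in W(\mc{A}_{\mc{M},\mc{N}}^{\dg})$ via the factorization $\mc{A}_{\mc{M},\mc{N}}^{\dg}=\mc{N}^{-1/2}\n\tilde{\mc{A}}^{\dagger}\m\mc{M}^{1/2}$.

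\textbf{Main obstacle.} The routine parts are the reshaping reductions and the invertibility bookkeeping in (i). The genuinely delicate point is the middle step of (ii): establishing $0\in W(\mc{C})\iff 0\in W(\mc{C}^{\dagger})$ in full generality for rectangular-in-spirit (rank-deficient) even-order square tensors, where $\mc{C}^{\dagger}\n\mc{C}$ is only a projection and not the identity, so the substitution $\mc{X}=\mc{C}^{\dagger}\n\mc{Y}$ must be justified carefully by splitting $\mc{X}$ into its components in the range of $\mc{C}^{\dagger}\n\mc{C}$ and its orthogonal complement (the kernel of $\mc{C}$), and using that $0\in\sigma(\mc{C})$ already lands us in $W(\mc{C})$ when the kernel is nontrivial. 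Getting the scalar factors and conjugations exactly right in the weighted inner-product manipulations (so that the vanishing of one quadratic form is equivalent to the vanishing of the other) is where I would spend the most care.
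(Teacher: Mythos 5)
Your part (i) is fine, but part (ii) has a genuine gap: the two outer links of your chain, $0\in W(\mc{A})\iff 0\in W(\tilde{\mc{A}})$ and $0\in W(\tilde{\mc{A}}^{\dagger})\iff 0\in W(\mc{A}_{\mc{M},\mc{N}}^{\dg})$, are false in general. The map $\mc{A}\mapsto\mc{M}^{1/2}\m\mc{A}\n\mc{N}^{-1/2}$ is neither a similarity nor a congruence when $\mc{M}\neq\mc{N}$, and your own computation shows why the claimed collapse does not happen: with $\mc{Z}=\mc{N}^{1/2}\n\mc{X}$ one gets $\left\langle\tilde{\mc{A}}\n\mc{Z},\mc{Z}\right\rangle=\left\langle\mc{A}\n\mc{X},\mc{M}^{1/2}\n\mc{N}^{1/2}\n\mc{X}\right\rangle$, which is not a scalar multiple of $\left\langle\mc{A}\n\mc{X},\mc{X}\right\rangle$ unless $\mc{M}^{1/2}\n\mc{N}^{1/2}\n\mc{X}$ happens to be proportional to $\mc{X}$. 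A concrete matrix counterexample (the case $N=1$ of the setting): take $A=\begin{pmatrix}1&1\\ 1/8&1/4\end{pmatrix}$, $M=\begin{pmatrix}1&0\\0&64\end{pmatrix}$, $N=\begin{pmatrix}2&3\\3&5\end{pmatrix}$, so that $M^{1/2}=\mathrm{diag}(1,8)$, $N^{1/2}=\begin{pmatrix}1&1\\1&2\end{pmatrix}$ and $\tilde{A}=M^{1/2}AN^{-1/2}=I$. For $x=(1,t)^{T}$ with $t=(-9+\sqrt{17})/4$ one checks $x^{*}Ax=\tfrac{1}{8}\left(2t^{2}+9t+8\right)=0$, so $0\in W(A)$, while $W(\tilde{A})=\{1\}$; since $A$ is invertible, also $\tilde{A}^{\dagger}=I$ and $0\notin W(\tilde{A}^{\dagger})$ although $0\in W(A_{M,N}^{\dagger})=W(A^{-1})$. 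So both outer equivalences fail (even though the theorem's conclusion holds), and no choice of substitution $\mc{Z}=\mc{Z}(\mc{X})$ can repair them.

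The repair is to bypass $\tilde{\mc{A}}$ entirely in part (ii) and split on invertibility, which is essentially what your middle step already does. If $\mc{A}$ is singular, then by part (i) so is $\mc{A}_{\mc{M},\mc{N}}^{\dg}$, and $0\in\sigma(\cdot)\subseteq W(\cdot)$ puts $0$ in both numerical ranges. If $\mc{A}$ is invertible, then $\mc{A}_{\mc{M},\mc{N}}^{\dg}=\mc{A}^{-1}$ for any Hermitian positive definite weights (all four defining equations \eqref{wmpeq1}--\eqref{wmpeq4} are immediate for $\mc{X}=\mc{A}^{-1}$), and the classical flip does the rest: if $\left\langle\mc{A}\n\mc{X},\mc{X}\right\rangle=0$ with $\mc{X}\neq\mc{O}$, then $\mc{Y}=\mc{A}\n\mc{X}\neq\mc{O}$ satisfies $\left\langle\mc{A}^{-1}\n\mc{Y},\mc{Y}\right\rangle=\overline{\left\langle\mc{A}\n\mc{X},\mc{X}\right\rangle}=0$, so $0\in W(\mc{A}_{\mc{M},\mc{N}}^{\dg})$ by \eqref{eqn:nraltdfn}, and the converse is symmetric. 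Notice that in this correct argument the weights enter only through part (i) and through the identity $\mc{A}_{\mc{M},\mc{N}}^{\dg}=\mc{A}^{-1}$ in the invertible case; the auxiliary tensor $\tilde{\mc{A}}$ of Theorem \ref{thmwmpi}, which drives your proposal, is exactly the object whose numerical range is \emph{not} controlled here.
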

An immediate consequence of the above result when the tensor weights $\mc{M}$ and $\mc{N}$ are considered to be identity tensors is as follows.
                 \begin{corollary}[Theorems 5.2 and 5.4, \cite{nirmal}]\label{cor:thm1.1}\leavevmode\\
 Let $\mc{A}\in \mb{C}^{I_{1\ldots N}\times I_{1\ldots N}}$. Then, 
                \begin{enumerate}[(i)]
        \item $0\in\sigma{(\mc{A})}$ if and only if $0\in\sigma(\mc{A}^{\dg})$;
        \item $0\in W(\mc{A})$ if and only if $0\in W(\mc{A}^{\dg})$.
    \end{enumerate}  
  \end{corollary}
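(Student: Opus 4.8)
The plan is to obtain this corollary as the identity-weight specialization of Theorem~\ref{thm1}. Setting $\mc{M}=\mc{N}=\mc{I}$ (the identity tensor in $\mb{C}^{I_{1\ldots N}\times I_{1\ldots N}}$), the remark in Definition~\ref{defwmpi} gives $\mc{A}_{\mc{I},\mc{I}}^{\dg}=\mc{A}^{\dg}$, so both equivalences (i) and (ii) of Theorem~\ref{thm1} read verbatim as (i) and (ii) here. This is the direction the paper intends when it calls the statement an immediate consequence; no circularity arises, since Theorem~\ref{thm1} is established independently (via the WSVD machinery of Section~\ref{sec:WSVD}) and the corollary is the strictly weaker $\mc{M}=\mc{N}=\mc{I}$ case.

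For completeness, and because the statement is itself the classical result of \cite{nirmal}, I would also record the self-contained mechanism. For part (i), apply the reshaping bijection and Lemma~\ref{lem:rsh EP prop} to reduce to matrices: $0\in\sigma(\mc{A})$ holds iff $\mc{A}$ is singular, i.e.\ $\textnormal{rshrank}(\mc{A})$ is not full; since the Moore--Penrose inverse preserves rank, this is equivalent to $\mc{A}^{\dg}$ being singular, i.e.\ $0\in\sigma(\mc{A}^{\dg})$. For the nonsingular case of part (ii) ($\mc{A}$ invertible, so $\mc{A}^{\dg}=\mc{A}^{-1}$), given a nonzero $\mc{X}$ with $\langle\mc{A}^{-1}\n\mc{X},\mc{X}\rangle=0$, set $\mc{Y}=\mc{A}^{-1}\n\mc{X}\neq\mc{O}$; then $\mc{X}=\mc{A}\n\mc{Y}$, and by substitution followed by conjugate symmetry of the inner product, $\langle\mc{A}^{-1}\n\mc{X},\mc{X}\rangle=\langle\mc{Y},\mc{A}\n\mc{Y}\rangle=\overline{\langle\mc{A}\n\mc{Y},\mc{Y}\rangle}$, whence $\langle\mc{A}\n\mc{Y},\mc{Y}\rangle=0$ and, by the scale-invariant form \eqref{eqn:nraltdfn}, $0\in W(\mc{A})$. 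The reverse implication is symmetric, so $0\in W(\mc{A})\Leftrightarrow 0\in W(\mc{A}^{\dg})$ when $\mc{A}$ is invertible.

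The singular case of part (ii) is where one must be careful, since the substitution $\mc{Y}=\mc{A}^{-1}\n\mc{X}$ is no longer available. Here I would argue directly from spectral containment: if $0\in\sigma(\mc{A})$ with eigentensor $\mc{X}$, then $\mc{A}\n\mc{X}=\mc{O}$ forces $\langle\mc{A}\n\mc{X},\mc{X}\rangle=0$, so $0\in W(\mc{A})$ by \eqref{eqn:nraltdfn}; by part (i) we also have $0\in\sigma(\mc{A}^{\dg})$, and the same computation with an eigentensor of $\mc{A}^{\dg}$ gives $0\in W(\mc{A}^{\dg})$. Thus in the singular case both memberships hold simultaneously and the equivalence is automatic. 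The main obstacle is precisely this bifurcation: the clean bijective transfer of a zero of the numerical range works only while $\mc{A}$ is invertible, so the proof must split on invertibility and lean on part (i) together with spectral containment to close the singular branch.
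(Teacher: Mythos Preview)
Your proposal is correct, and your primary derivation---specializing Theorem~\ref{thm1} to $\mc{M}=\mc{N}=\mc{I}$ and invoking the remark in Definition~\ref{defwmpi} that $\mc{A}_{\mc{I},\mc{I}}^{\dg}=\mc{A}^{\dg}$---is exactly how the paper obtains the corollary (it is stated there only as an ``immediate consequence'' with no further argument). The supplementary self-contained proof you append (rank preservation for (i); the invertible/singular bifurcation with the substitution $\mc{Y}=\mc{A}^{-1}\n\mc{X}$ and spectral containment for (ii)) is sound and goes beyond what the paper records, but is not needed for the corollary as presented.
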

   We have the following corollary in the matrix setting that generalizes Theorem 2 in \cite{chien2020}. 
                  \begin{corollary}\label{cor:thm1.2}
  Let $\mc{A}\in \mb{C}^{n\times n}$. If $M,N\in\mb{C}^{n\times n} $ are two Hermitian positive definite matrices, then 
                  \begin{enumerate}[(i)]
        \item $0\in\sigma{(A)}$ if and only if $0\in\sigma({A}_{M,N}^{\dg})$;
        \item $0\in W(A)$ if and only if $0\in W(A_{M,N}^{\dg})$.
    \end{enumerate}   
   \end{corollary}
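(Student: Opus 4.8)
The plan is to deduce Corollary \ref{cor:thm1.2} as the matrix specialization ($N=1$) of Theorem \ref{thm1}, exactly as Corollary \ref{cor:thm1.1} is obtained by specializing the weights. Concretely, a square matrix $A\in\mb{C}^{n\times n}$ is a square tensor of order $N=1$ (with $I_1=n$), the Hermitian positive definite matrices $M,N$ are order-$1$ Hermitian positive definite tensors, the Einstein products $\m,\n$ reduce to ordinary matrix multiplication, the tensor inner product $\langle\mc{X},\mc{Y}\rangle=\mc{Y}^H\n\mc{X}$ reduces to $\langle x,y\rangle=y^*x$, and the tensor numerical range of Definition \ref{def:nrtn} reduces to the classical $W(A)$ of \eqref{eq:nrmtrx}. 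Moreover, by Definition \ref{defwmpi} the weighted Moore-Penrose inverse $\mc{A}_{\mc{M},\mc{N}}^{\dg}$ of the order-$1$ tensor coincides with the matrix weighted Moore-Penrose inverse $A_{M,N}^{\dg}$. Hence both assertions (i) and (ii) follow verbatim from the corresponding statements of Theorem \ref{thm1}.

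The only substantive content is therefore the proof of Theorem \ref{thm1} itself. For part (i): using Theorem \ref{thmwmpi} write $\tilde{\mc{A}}=\mc{M}^{1/2}\m\mc{A}\n\mc{N}^{-1/2}$, so $\mc{A}_{\mc{M},\mc{N}}^{\dg}=\mc{N}^{-1/2}\n\tilde{\mc{A}}^{\dg}\m\mc{M}^{1/2}$. Since $\mc{M}^{1/2}$ and $\mc{N}^{-1/2}$ are invertible, $0\in\sigma(\mc{A})$ iff $\mc{A}$ is singular (i.e.\ $\textnormal{rshrank}(\mc{A})<$ full) iff $\tilde{\mc{A}}$ is singular iff $0\in\sigma(\tilde{\mc{A}})$; and $0\in\sigma(\tilde{\mc{A}})$ iff $\tilde{\mc{A}}^{\dg}$ is singular (standard Moore-Penrose fact) iff $0\in\sigma(\tilde{\mc{A}}^{\dg})$; finally the similarity-type conjugation by the invertible $\mc{N}^{-1/2}$, $\mc{M}^{1/2}$ transfers singularity to $\mc{A}_{\mc{M},\mc{N}}^{\dg}$, giving $0\in\sigma(\mc{A}_{\mc{M},\mc{N}}^{\dg})$. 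For part (ii): by the alternate description \eqref{eqn:nraltdfn}, $0\in W(\mc{A})$ iff there is a nonzero $\mc{X}$ with $\langle\mc{A}\n\mc{X},\mc{X}\rangle=0$. One direction of the "only if" uses that a nonzero $\mc{X}$ in the null space of $\mc{A}$ (which exists precisely when $0\in\sigma(\mc{A})$, handled by part (i)) gives $\langle\mc{A}\n\mc{X},\mc{X}\rangle=0$; the genuinely interesting case is when $\mc{A}$ has full rank, where one must produce an explicit $\mc{Y}$ with $\langle\mc{A}_{\mc{M},\mc{N}}^{\dg}\n\mc{Y},\mc{Y}\rangle=0$ from an $\mc{X}$ with $\langle\mc{A}\n\mc{X},\mc{X}\rangle=0$. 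The natural choice is to set $\mc{Y}=\mc{A}\n\mc{X}$ (so $\mc{A}_{\mc{M},\mc{N}}^{\dg}\n\mc{Y}$ relates back to $\mc{X}$ through $\mc{A}_{\mc{M},\mc{N}}^{\dg}\n\mc{A}$), and then massage the weighted inner products using Lemma \ref{lem6} and the weighted-inner-product machinery of Section \ref{sec:wtnsrnorm} together with the defining equations \eqref{wmpeq1}--\eqref{wmpeq4}; symmetry of the argument (replacing $\mc{A}$ by $\mc{A}_{\mc{M},\mc{N}}^{\dg}$ and using $(\mc{A}_{\mc{M},\mc{N}}^{\dg})_{\mc{N},\mc{M}}^{\dg}=\mc{A}$) yields the converse.

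The main obstacle is the bookkeeping in part (ii): unlike the unweighted case, the vector $\mc{Y}=\mc{A}\n\mc{X}$ must be paired in the $W(\mc{A}_{\mc{M},\mc{N}}^{\dg})$ inner product, and one has to verify that the cross terms coming from the weights $\mc{M}$ and $\mc{N}$ collapse correctly when the Penrose identities \eqref{wmpeq3}--\eqref{wmpeq4} are invoked; concretely one wants $\langle\mc{A}_{\mc{M},\mc{N}}^{\dg}\n\mc{Y},\mc{Y}\rangle$ to reduce to a scalar multiple of $\overline{\langle\mc{A}\n\mc{X},\mc{X}\rangle}$ or to $\langle\mc{A}\n\mc{X},\mc{X}\rangle$ up to a nonzero factor, and keeping track of which Hermitian weight acts on which side is where care is needed. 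Once Theorem \ref{thm1} is in hand, Corollary \ref{cor:thm1.2} is immediate, and the remark that it generalizes Theorem 2 of \cite{chien2020} follows by further specializing $M=N=I_n$, which recovers the unweighted matrix statement.
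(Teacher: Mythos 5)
Your proposal matches the paper: the corollary is obtained exactly as you describe, as the order-one ($N=1$, $I_1=n$) specialization of Theorem \ref{thm1}, under which the Einstein product, the tensor inner product, Definition \ref{defwmpi}, and Definition \ref{def:nrtn} all reduce to their classical matrix counterparts, so no further argument is needed. Your supplementary sketch of Theorem \ref{thm1} itself (rank preservation under the invertible weight factors for part (i), and the choice $\mc{Y}=\mc{A}\n\mc{X}$ — noting that in the invertible case $\mc{A}_{\mc{M},\mc{N}}^{\dg}=\mc{A}^{-1}$ — for part (ii)) is consistent with what the theorem requires.
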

As an application of Theorem \ref{thm1}, we have the following result.
                   \begin{theorem}\label{thm2} Let $\{z_{i_1i_2\cdots i_N}\}_{i_j=1}^{I_j}, ~\textnormal{for}~ j=1,2,..., N$ be nonzero complex numbers. If $$0=\sum_{i_1,i_2,...,i_N}\alpha_{i_1i_2\cdots i_N}z_{i_1i_2\cdots i_N},$$ for some nonnegative scalars $\{\alpha_{i_1i_2\cdots i_N}\}_{i_j=1}^{I_j}  ~\textnormal{for}~ j=1,2,..., N$ with $\displaystyle \sum_{i_1,i_2,...,i_N}\alpha_{i_1i_2\cdots i_N}=1$, then there exist nonnegative scalars $\{\beta_{i_1i_2\cdots i_N}\}_{i_j=1}^{I_j}  ~\textnormal{for}~ j=1,2,..., N$ with $\displaystyle\sum_{i_1,i_2,...,i_N}\beta_{i_1i_2\cdots i_N}=1$ such that $$0=\sum_{i_1,i_2,...,i_N}\beta_{i_1i_2\cdots i_N}\frac{1}{z_{i_1i_2\cdots i_N}}.$$ 
\end{theorem}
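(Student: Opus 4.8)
The plan is to reduce the statement to a purely geometric fact about the numerical range of a diagonal matrix and then invoke Theorem~\ref{thm1}. First I would build a diagonal tensor carrying the given data: set $\mathcal{D}\in\mb{C}^{I_{1\ldots N}\times I_{1\ldots N}}$ to be the (even-order square) tensor whose $(i_1\cdots i_N,i_1\cdots i_N)$ diagonal entry is $z_{i_1\cdots i_N}$ and whose off-diagonal entries vanish; concretely one may take $\mathcal{D}=\textnormal{rsh}^{-1}(\textnormal{diag}(z_{i_1\cdots i_N}))$. Since all the $z_{i_1\cdots i_N}$ are nonzero, $\mathcal{D}$ is invertible, and its Moore--Penrose inverse (equivalently its inverse) is the diagonal tensor with entries $1/z_{i_1\cdots i_N}$.

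Next I would identify $W(\mathcal{D})$. For a unit tensor $\mathcal{X}$ with reshaping $x=(x_{i_1\cdots i_N})$, one has $\langle\mathcal{D}\n\mathcal{X},\mathcal{X}\rangle=\sum_{i_1,\ldots,i_N}|x_{i_1\cdots i_N}|^2 z_{i_1\cdots i_N}$, and $\|\mathcal{X}\|=1$ forces $\sum|x_{i_1\cdots i_N}|^2=1$. Hence $W(\mathcal{D})$ is exactly the set of convex combinations $\sum_{i_1,\ldots,i_N}\alpha_{i_1\cdots i_N}z_{i_1\cdots i_N}$ with $\alpha_{i_1\cdots i_N}\ge 0$ and $\sum\alpha_{i_1\cdots i_N}=1$ (taking $\alpha_{i_1\cdots i_N}=|x_{i_1\cdots i_N}|^2$ in one direction, and choosing $x$ with those moduli in the other). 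The hypothesis $0=\sum\alpha_{i_1\cdots i_N}z_{i_1\cdots i_N}$ therefore says precisely that $0\in W(\mathcal{D})$.

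Now apply Theorem~\ref{thm1}(ii) to the tensor $\mathcal{D}$ with the given Hermitian positive definite weights $\mathcal{M}$ and $\mathcal{N}$ (or even just with identity weights, via Corollary~\ref{cor:thm1.1}): $0\in W(\mathcal{D})$ implies $0\in W(\mathcal{D}_{\mathcal{M},\mathcal{N}}^{\dg})$. If one uses identity weights, $\mathcal{D}^{\dg}=\mathcal{D}^{-1}$ is the diagonal tensor with entries $1/z_{i_1\cdots i_N}$, and by the same computation of the numerical range of a diagonal tensor, $0\in W(\mathcal{D}^{-1})$ means there exist nonnegative scalars $\beta_{i_1\cdots i_N}$ with $\sum\beta_{i_1\cdots i_N}=1$ and $0=\sum\beta_{i_1\cdots i_N}(1/z_{i_1\cdots i_N})$, which is the desired conclusion.

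The only delicate point — and the step I would write most carefully — is the clean identification of $W(\mathcal{D})$ with the set of convex combinations of the $z_{i_1\cdots i_N}$; this needs the observation that the reshaping operation (Lemma~\ref{lem:rsh EP prop}) turns $\langle\mathcal{D}\n\mathcal{X},\mathcal{X}\rangle$ into the corresponding vector expression $x^{H}Dx$ and preserves norms, so that the tensor numerical range of a diagonal tensor coincides with the matrix numerical range of a diagonal matrix, which is classically the convex hull of the diagonal entries. Everything else is a direct invocation of Theorem~\ref{thm1}. An alternative route that avoids even mentioning reshaping is to prove the two inclusions for $W(\mathcal{D})$ by hand using the coordinatewise formula for the Einstein product against the standard unit tensors $\mathcal{E}_{i_1\cdots i_N}$; I would likely present that version since it keeps the argument self-contained.
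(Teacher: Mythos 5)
Your proposal is correct and follows essentially the same route the paper intends: realize the $z_{i_1\cdots i_N}$ as the diagonal entries of a diagonal (hence invertible) even-order tensor $\mathcal{D}$, observe that $W(\mathcal{D})$ is the set of convex combinations of those entries so the hypothesis says $0\in W(\mathcal{D})$, and then apply Theorem \ref{thm1}(ii) (or Corollary \ref{cor:thm1.1}) together with $\mathcal{D}^{\dg}_{\mathcal{M},\mathcal{N}}=\mathcal{D}^{-1}$ to get $0\in W(\mathcal{D}^{-1})$, i.e.\ the desired convex combination of the reciprocals. No gaps; the diagonal numerical-range identification you flag as the delicate step is exactly the computation the paper relies on.
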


In like manner it is easy to show Corollary \ref{cor:thm2.1} for the matrix case.                     
                   \begin{corollary}[Theorem 3, \cite{chien2020}]\label{cor:thm2.1}\leavevmode\\
Let $z_1, z_2, \ldots, z_n$ be nonzero complex numbers. If $$0=\alpha_1z_1+\alpha_2z_2+\ldots+\alpha_nz_n$$ for some nonnegative scalars $\alpha_1, \alpha_2, \ldots, \alpha_n$ with $\alpha_1+ \alpha_2+ \cdots+ \alpha_n=1$, then there exist nonnegative scalars  $\beta_1,\beta_2,\ldots,\beta_n$ with $\beta_1+\beta_2+\cdots+\beta_n=1$ such that $$0=\beta_1\frac{1}{z_1}+\beta_2\frac{1}{z_2}+\cdots+\beta_n\frac{1}{z_n}.$$
\end{corollary}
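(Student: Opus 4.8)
The plan is to encode the given complex numbers as the diagonal entries of a diagonal even-order square tensor, so that the two convex-combination identities become exactly the statements $0\in W(\mc{A})$ and $0\in W(\mc{A}^{\dg})$, which are equivalent by Theorem \ref{thm1}.

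First I would define the diagonal tensor $\mc{A}\in\mb{C}^{I_{1\ldots N}\times I_{1\ldots N}}$ by $a_{i_{1}\cdots i_{N}j_{1}\cdots j_{N}}=z_{i_{1}\cdots i_{N}}$ when $(i_{1},\ldots,i_{N})=(j_{1},\ldots,j_{N})$ and $a_{i_{1}\cdots i_{N}j_{1}\cdots j_{N}}=0$ otherwise. Since every $z_{i_{1}\cdots i_{N}}$ is nonzero, $\mc{A}$ is invertible in the sense of \cite{brazell2013}, with $\mc{A}^{-1}$ the diagonal tensor whose diagonal entries are $1/z_{i_{1}\cdots i_{N}}$ (this is a direct check of $\mc{A}\n\mc{A}^{-1}=\mc{I}=\mc{A}^{-1}\n\mc{A}$ using the formula for the Einstein product of diagonal tensors). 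Verifying the four equations of Definition \ref{defwmpi} with identity weights then gives $\mc{A}^{\dg}=\mc{A}^{-1}$.

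Next I would compute the numerical range of a diagonal tensor. For a unit tensor $\mc{X}=(x_{i_{1}\cdots i_{N}})$ one has $(\mc{A}\n\mc{X})_{i_{1}\cdots i_{N}}=z_{i_{1}\cdots i_{N}}x_{i_{1}\cdots i_{N}}$, so $\left\langle\mc{A}\n\mc{X},\mc{X}\right\rangle=\sum_{i_{1},\ldots,i_{N}}z_{i_{1}\cdots i_{N}}\,|x_{i_{1}\cdots i_{N}}|^{2}$. As $\mc{X}$ ranges over all unit tensors, the tuple $(|x_{i_{1}\cdots i_{N}}|^{2})$ ranges over the whole standard simplex $\{(t_{i_{1}\cdots i_{N}}):t_{i_{1}\cdots i_{N}}\ge 0,\ \sum t_{i_{1}\cdots i_{N}}=1\}$, surjectivity being witnessed by $x_{i_{1}\cdots i_{N}}=\sqrt{t_{i_{1}\cdots i_{N}}}$. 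Hence $W(\mc{A})$ is precisely the convex hull of $\{z_{i_{1}\cdots i_{N}}\}$, and applying the same computation to $\mc{A}^{\dg}=\mc{A}^{-1}$ shows $W(\mc{A}^{\dg})$ is the convex hull of $\{1/z_{i_{1}\cdots i_{N}}\}$.

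Finally I would assemble the argument. The hypothesis $0=\sum\alpha_{i_{1}\cdots i_{N}}z_{i_{1}\cdots i_{N}}$ with $\alpha_{i_{1}\cdots i_{N}}\ge 0$ and $\sum\alpha_{i_{1}\cdots i_{N}}=1$ says exactly that $0\in W(\mc{A})$. Applying Theorem \ref{thm1}(ii) with $\mc{M}=\mc{N}=\mc{I}$ (equivalently Corollary \ref{cor:thm1.1}(ii)) yields $0\in W(\mc{A}^{\dg})$, i.e.\ $0$ lies in the convex hull of $\{1/z_{i_{1}\cdots i_{N}}\}$; expressing this membership as a convex combination produces the desired nonnegative scalars $\beta_{i_{1}\cdots i_{N}}$ with $\sum\beta_{i_{1}\cdots i_{N}}=1$ and $0=\sum\beta_{i_{1}\cdots i_{N}}/z_{i_{1}\cdots i_{N}}$. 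The only mildly delicate point is the identification $W(\mc{A})=\operatorname{conv}\{z_{i_{1}\cdots i_{N}}\}$ for a diagonal tensor, and in particular the surjectivity of $\mc{X}\mapsto(|x_{i_{1}\cdots i_{N}}|^{2})$ onto the simplex together with the fact that the Einstein-product inverse of a diagonal tensor is diagonal with reciprocal entries; the rest is bookkeeping.
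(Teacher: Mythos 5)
Your argument is correct and follows essentially the same route the paper intends: build a diagonal operator with the given nonzero numbers on the diagonal, observe that the hypothesis says $0\in W(\mc{A})$, invoke the equivalence $0\in W(\mc{A})\Leftrightarrow 0\in W(\mc{A}^{\dg})$ (Theorem \ref{thm1} with identity weights, i.e.\ Corollary \ref{cor:thm1.1}), and read off the convex combination of reciprocals from $W(\mc{A}^{\dg})=W(\mc{A}^{-1})$. The only cosmetic difference is that for this matrix-setting corollary one may simply take $N=1$, i.e.\ the diagonal matrix $\mathrm{diag}(z_1,\ldots,z_n)$, exactly as the paper does by deducing it ``in like manner'' from the tensor version.
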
  
 Next theorem establishes a relation among $\sigma(\mc{A})$, $W(\mc{A})$, and $\dfrac{1}{W(\mc{A}_{\mc{M}, \mc{N}}^{\dg})}$.
                      \begin{theorem}\label{thm3}
Let $\mc{A}\in \mb{C}^{I_{1\ldots N}\times I_{1\ldots N}}$, and $\mc{M},\mc{N} \in \mb{C}^{I_{1\ldots N}\times I_{1\ldots N}}$ be two Hermitian positive definite tensors. If $\mc{A}\n \mc{A}_{\mc{M}, \mc{N}}^{\dg}=\mc{A}_{\mc{M}, \mc{N}}^{\dg}\n \mc{A}$, then 
                      \begin{equation}\label{eqn1}
    \sigma(\mc{A})\subset W(\mc{A})\bigcap \dfrac{1}{W(\mc{A}_{\mc{M}, \mc{N}}^{\dg})}.
\end{equation}
\end{theorem}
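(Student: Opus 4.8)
The plan is to reduce everything to a spectral/numerical-range containment for the transformed tensor $\tilde{\mc{A}}=\mc{M}^{1/2}\m\mc{A}\n\mc{M}^{-1/2}$ (or, more precisely, to work directly with the unitary-type invariance coming from the WSVD-type tensors $\mc{U}$, $\mc{V}$), and then push the known matrix/tensor fact ``$\sigma(\mc{B})\subset W(\mc{B})$'' through. First I would recall the classical inclusion $\sigma(\mc{A})\subset W(\mc{A})$ for any even-order square tensor, which follows directly from Definition \ref{def:nrtn}: if $\mc{A}\n\mc{X}=\lambda\mc{X}$ with $\mc{X}$ a unit tensor, then $\langle\mc{A}\n\mc{X},\mc{X}\rangle=\lambda$, so $\lambda\in W(\mc{A})$. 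This already gives $\sigma(\mc{A})\subset W(\mc{A})$, i.e.\ the first half of \eqref{eqn1} with no hypotheses at all.

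The substantive part is showing $\sigma(\mc{A})\subset \tfrac{1}{W(\mc{A}_{\mc{M},\mc{N}}^{\dg})}$ under the commutativity hypothesis $\mc{A}\n\mc{A}_{\mc{M},\mc{N}}^{\dg}=\mc{A}_{\mc{M},\mc{N}}^{\dg}\n\mc{A}$. Here I would use Theorem \ref{thm1}(i) to dispatch the case $\lambda=0$: if $0\in\sigma(\mc{A})$ then $0\in\sigma(\mc{A}_{\mc{M},\mc{N}}^{\dg})\subset W(\mc{A}_{\mc{M},\mc{N}}^{\dg})$, and by the convention for $\tfrac{1}{W(\cdot)}$ the origin is a member of $\tfrac{1}{W(\mc{A}_{\mc{M},\mc{N}}^{\dg})}$. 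For $\lambda\neq 0$ with $\mc{A}\n\mc{X}=\lambda\mc{X}$, $\mc{X}$ nonzero, the goal is to produce a unit tensor $\mc{Y}$ with $\langle\mc{A}_{\mc{M},\mc{N}}^{\dg}\n\mc{Y},\mc{Y}\rangle=\tfrac{1}{\lambda}$. The natural candidate is $\mc{Y}=\mc{X}/\|\mc{X}\|$ itself: I would argue, using the four defining equations \eqref{wmpeq1}--\eqref{wmpeq4} together with the commutativity assumption, that $\mc{X}$ is also an eigentensor of $\mc{A}_{\mc{M},\mc{N}}^{\dg}$ with eigenvalue $1/\lambda$. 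Concretely, commutativity gives $\mc{A}\n\mc{A}_{\mc{M},\mc{N}}^{\dg}\n\mc{A}=\mc{A}_{\mc{M},\mc{N}}^{\dg}\n\mc{A}\n\mc{A}$ but also $\mc{A}\n\mc{A}_{\mc{M},\mc{N}}^{\dg}\n\mc{A}=\mc{A}$ by \eqref{wmpeq1}; applying the projector $\mc{P}=\mc{A}\n\mc{A}_{\mc{M},\mc{N}}^{\dg}=\mc{A}_{\mc{M},\mc{N}}^{\dg}\n\mc{A}$ to $\mc{X}$ and using $\mc{A}\n\mc{X}=\lambda\mc{X}$ with $\lambda\neq 0$ shows $\mc{P}\n\mc{X}=\mc{X}$, whence $\mc{A}_{\mc{M},\mc{N}}^{\dg}\n\mc{A}\n\mc{X}=\mc{X}$, i.e.\ $\lambda\,\mc{A}_{\mc{M},\mc{N}}^{\dg}\n\mc{X}=\mc{X}$, so $\mc{A}_{\mc{M},\mc{N}}^{\dg}\n\mc{X}=\tfrac{1}{\lambda}\mc{X}$. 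Then $\tfrac{1}{\lambda}\in\sigma(\mc{A}_{\mc{M},\mc{N}}^{\dg})\subset W(\mc{A}_{\mc{M},\mc{N}}^{\dg})$, so $\lambda\in\tfrac{1}{W(\mc{A}_{\mc{M},\mc{N}}^{\dg})}$.

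Combining the two inclusions gives $\sigma(\mc{A})\subset W(\mc{A})\cap\tfrac{1}{W(\mc{A}_{\mc{M},\mc{N}}^{\dg})}$, which is \eqref{eqn1}. I expect the main obstacle to be the careful justification of the step $\mc{P}\n\mc{X}=\mc{X}$: one must be sure that $\mc{P}=\mc{A}\n\mc{A}_{\mc{M},\mc{N}}^{\dg}$ is idempotent (immediate from \eqref{wmpeq1}) and that, because $\lambda\neq0$, the eigentensor $\mc{X}=\tfrac{1}{\lambda}\mc{A}\n\mc{X}$ lies in the range of $\mc{A}$, hence is fixed by $\mc{P}$. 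A secondary point of care is the precise meaning of $\tfrac{1}{W(\cdot)}$ when $0\in W(\cdot)$, which is exactly where Theorem \ref{thm1} is invoked; with that convention fixed, no further case analysis is needed. Everything else is routine manipulation with the Einstein product and the associative law.
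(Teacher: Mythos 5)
Your argument is correct and is essentially the paper's route: spectral containment gives $\sigma(\mc{A})\subset W(\mc{A})$, and for $\lambda\neq 0$ the commutativity hypothesis combined with $\mc{A}\n\mc{A}_{\mc{M},\mc{N}}^{\dg}\n\mc{A}=\mc{A}$ shows the eigentensor satisfies $\mc{A}_{\mc{M},\mc{N}}^{\dg}\n\mc{X}=\tfrac{1}{\lambda}\mc{X}$, so $\tfrac{1}{\lambda}\in\sigma(\mc{A}_{\mc{M},\mc{N}}^{\dg})\subset W(\mc{A}_{\mc{M},\mc{N}}^{\dg})$, while $\lambda=0$ is handled via Theorem \ref{thm1} and the reciprocal-set convention at the origin. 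The only thing to make explicit in a final write-up is that convention for $\tfrac{1}{W(\cdot)}$ when $0\in W(\cdot)$, which you already flagged.
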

Our next example shows that the assumption $\mc{A}\n \mc{A}_{\mc{M}, \mc{N}}^{\dg}=\mc{A}_{\mc{M}, \mc{N}}^{\dg}\n \mc{A}$ in Theorem \ref{thm3} is essential.
                         \begin{example}\label{exm1}
Let $A=\begin{pmatrix}
1&1\\
0&0\\
\end{pmatrix}\in \mathbb{C}^{2\times2}$. If $M=\begin{pmatrix}
1&0\\
0&2\\
\end{pmatrix}$, $N=\begin{pmatrix}
3&0\\
0&1\\
\end{pmatrix}$ are two Hermitian positive definite matrices in $\mathbb{C}^{2\times2}$, then $A_{M,N}^{\dagger}=\begin{pmatrix}
\frac{1}{4}&0\\
\frac{3}{4}&0\\
\end{pmatrix}$,
$AA_{M,N}^{\dagger}=\begin{pmatrix}
1&0\\
0&0\\
\end{pmatrix},~ \textnormal{and}~
 A_{M,N}^{\dagger}A=\begin{pmatrix}
\frac{1}{4}&\frac{1}{4}\\
\frac{3}{4}&\frac{3}{4}\\
\end{pmatrix}$.
So, $AA_{M,N}^{\dagger}\neq A_{M,N}^{\dagger}A$.
Now,
                      \begin{eqnarray*}
W(A_{M,N}^{\dagger})&=&\left\{\frac{1}{4}\bar z_{1}z_{1}+\frac{3}{4}\bar z_{2}z_{1}:~z=(z_1,~z_2)^T \in \mathbb{C}^2,~|z_1|^2+|z_2|^2=1\right\},\\
\frac{1}{W(A_{M,N}^{\dagger})}&=&\left\{\frac{1}{z}:~z\in W(A_{M,N}^{\dagger})\right\}. 
\end{eqnarray*}
Let $\alpha \in W(A_{M,N}^{\dagger}). $ Then,
                     \begin{eqnarray*}
\alpha&=&\frac{1}{4}\bar z_{1}z_{1}+\frac{3}{4}\bar z_{2}z_{1},~~\textnormal{for some}~z=(z_1,~z_2)^T \in \mathbb{C}^2,~|z_1|^2+|z_2|^2=1,\\
|\alpha|&\leq& \frac{1}{4}|z_1|^2+\frac{3}{4}|z_2||z_1| \\
&\leq& \frac{1}{4}(|z_1|^2+|z_2|^2)+\frac{3}{4}\left(\frac{|z_1|^2+|z_2|^2}{2}\right)\\
&=& \frac{1}{4}\times 1+\frac{3}{8}\times 1\\
&=&\frac{5}{8}\\
&<&1.
\end{eqnarray*}
Let $\beta \in \frac{1}{W(A_{M,N}^{\dagger})}$. Then, $\beta = \frac{1}{\alpha}~\textnormal{for some}~\alpha \in W(A_{M,N}^{\dagger}),$ therefore, $|\beta|>1.$ Thus, the eigenvalue $1$ of $A$ is not in $\frac{1}{W(A_{M,N}^{\dagger})}.$
\end{example}
If $\mc{M}$ and $\mc{N}$ both are identity tensors, then the above result coincides with the following corollary.
                      \begin{corollary}[Theorem 5.7, \cite{nirmal}]\label{cor:thm3.1}\leavevmode\\
Let $\mc{A}\in \mb{C}^{I_{1\ldots N}\times I_{1\ldots N}}$. If $\mc{A}\n \mc{A}^{\dg}=\mc{A}^{\dg}\n \mc{A}$, then
                     \begin{equation*}
    \sigma(\mc{A})\subset W(\mc{A})\bigcap \dfrac{1}{W(\mc{A}^{\dg})}.
\end{equation*} 
\end{corollary}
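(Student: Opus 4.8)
The plan is to derive this corollary directly from Theorem \ref{thm3} by specializing the weight tensors, rather than repeating any of the underlying numerical-range machinery. Concretely, I would take both $\mc{M}$ and $\mc{N}$ to be the identity tensor $\mc{I}\in\mb{C}^{I_{1\ldots N}\times I_{1\ldots N}}$, which is trivially Hermitian positive definite, so that Theorem \ref{thm3} applies with these weights. By the last line of Definition \ref{defwmpi}, when the weights are identity tensors the weighted Moore-Penrose inverse reduces to the ordinary Moore-Penrose inverse, i.e. $\mc{A}_{\mc{I},\mc{I}}^{\dg}=\mc{A}^{\dg}$.

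With this identification in hand, the commutativity hypothesis $\mc{A}\n\mc{A}_{\mc{M},\mc{N}}^{\dg}=\mc{A}_{\mc{M},\mc{N}}^{\dg}\n\mc{A}$ demanded by Theorem \ref{thm3} becomes exactly the assumed condition $\mc{A}\n\mc{A}^{\dg}=\mc{A}^{\dg}\n\mc{A}$, while the set $\dfrac{1}{W(\mc{A}_{\mc{M},\mc{N}}^{\dg})}$ on the right-hand side of \eqref{eqn1} becomes $\dfrac{1}{W(\mc{A}^{\dg})}$. Feeding these substitutions into the inclusion \eqref{eqn1} yields $$\sigma(\mc{A})\subset W(\mc{A})\bigcap \frac{1}{W(\mc{A}^{\dg})},$$ which is precisely the assertion; this simultaneously recovers Theorem 5.7 of \cite{nirmal} as the unweighted instance of our Theorem \ref{thm3}.

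Because the proof is a straight specialization, there is essentially no obstacle to overcome. The only two points that merit an explicit (but immediate) check are that the identity tensor is a legitimate Hermitian positive definite weight, so that the hypotheses of Theorem \ref{thm3} are genuinely met, and that the collapse $\mc{A}_{\mc{I},\mc{I}}^{\dg}=\mc{A}^{\dg}$ is justified, which is already recorded in Definition \ref{defwmpi}; no further computation is required.
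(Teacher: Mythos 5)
Your proposal is correct and is exactly the paper's route: the corollary is presented there as the immediate specialization of Theorem \ref{thm3} to identity weights, using that $\mc{A}_{\mc{I},\mc{I}}^{\dg}=\mc{A}^{\dg}$ by Definition \ref{defwmpi}. No further comment is needed.
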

The following result generalizes Theorem 5 in
\cite{chien2020}.
                     \begin{corollary}\label{cor:thm3.2}
Let ${A}\in \mb{C}^{n\times n}$ be a square matrix, and $M,N\in\mb{C}^{n\times n} $ be two Hermitian positive definite matrices. If $A$ is weighted EP-matrix, i.e., $AA_{M,N}^{\dg}=A_{M,N}^{\dg}A$, then \begin{equation*}
    \sigma(A)\subset W(A)\bigcap \dfrac{1}{W({A}_{M,N}^{\dg})}.
\end{equation*} 
\end{corollary}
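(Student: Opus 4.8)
The plan is to read this off from Theorem \ref{thm3} by taking $N=1$. For $N=1$ an even-order square tensor in $\mb{C}^{I_{1\ldots N}\times I_{1\ldots N}}$ is just a matrix $A\in\mb{C}^{I_1\times I_1}=\mb{C}^{n\times n}$, the Einstein product $\n$ collapses to ordinary matrix multiplication, the weighted Moore--Penrose inverse $\mc{A}_{\mc{M},\mc{N}}^{\dg}$ becomes the matrix weighted Moore--Penrose inverse $A_{M,N}^{\dg}$, and by the remark following Definition \ref{def:nrtn} the tensor numerical range $W(\mc{A})$ reduces to the matrix numerical range $W(A)$. Under these identifications the hypothesis $\mc{A}\n\mc{A}_{\mc{M},\mc{N}}^{\dg}=\mc{A}_{\mc{M},\mc{N}}^{\dg}\n\mc{A}$ is precisely the weighted EP condition $AA_{M,N}^{\dg}=A_{M,N}^{\dg}A$, and the inclusion \eqref{eqn1} becomes the claimed $\sigma(A)\subset W(A)\cap \frac{1}{W(A_{M,N}^{\dg})}$. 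So the first task is simply to state the $N=1$ specialization and verify that each notion reduces to its standard matrix counterpart.

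Alternatively, one can give a direct argument mirroring the proof of Theorem \ref{thm3}, which I would split into the two inclusions $\sigma(A)\subseteq W(A)$ and $\sigma(A)\subseteq \frac{1}{W(A_{M,N}^{\dg})}$. The first is the classical spectral containment: if $Ax=\lambda x$ with $\|x\|=1$, then $\lambda=\langle Ax,x\rangle\in W(A)$. For the second, let $\lambda\in\sigma(A)$. If $\lambda=0$, Corollary \ref{cor:thm1.2}(i) gives $0\in\sigma(A_{M,N}^{\dg})\subseteq W(A_{M,N}^{\dg})$, placing $0$ in $\frac{1}{W(A_{M,N}^{\dg})}$. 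If $\lambda\neq 0$, pick a unit eigenvector $x$; applying the matrix form $AA_{M,N}^{\dg}A=A$ of \eqref{wmpeq1} to $x$ yields $AA_{M,N}^{\dg}x=x$, and then the commutativity hypothesis $AA_{M,N}^{\dg}=A_{M,N}^{\dg}A$ gives $A_{M,N}^{\dg}Ax=x$, i.e. $\lambda A_{M,N}^{\dg}x=x$, so $A_{M,N}^{\dg}x=\frac{1}{\lambda}x$. Hence $\frac{1}{\lambda}=\langle A_{M,N}^{\dg}x,x\rangle\in W(A_{M,N}^{\dg})$, that is $\lambda\in\frac{1}{W(A_{M,N}^{\dg})}$.

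The only step that uses the hypothesis in an essential way — and hence the main point to get right — is the passage from a nonzero eigenpair $(\lambda,x)$ of $A$ to the eigenpair $(1/\lambda,x)$ of $A_{M,N}^{\dg}$; this is exactly where $AA_{M,N}^{\dg}=A_{M,N}^{\dg}A$ (equivalently, the tensor commutativity assumed in Theorem \ref{thm3}) enters, and Example \ref{exm1} shows the conclusion genuinely fails without it. Everything else is routine: in the first route, confirming that the $N=1$ collapse of the Einstein product, of the weighted Moore--Penrose inverse, and of the numerical range are the standard matrix objects; in the second route, the elementary manipulations with \eqref{wmpeq1} and the commutativity relation above.
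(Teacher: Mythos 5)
Your first route --- specializing Theorem \ref{thm3} to $N=1$, where the Einstein product, the weighted Moore--Penrose inverse, and the tensor numerical range all collapse to their matrix counterparts --- is exactly how the paper obtains Corollary \ref{cor:thm3.2}, which it states without a separate proof, and your direct second argument (using $A A_{M,N}^{\dg}A=A$ plus the commutativity hypothesis to turn a nonzero eigenpair $(\lambda,x)$ of $A$ into the eigenpair $(1/\lambda,x)$ of $A_{M,N}^{\dg}$) is just the matrix shadow of the same reasoning. Both are correct; the only point worth flagging is the $\lambda=0$ case, which here, as in Theorem \ref{thm3} itself, relies on the implicit convention that $0\in\frac{1}{W(A_{M,N}^{\dg})}$ whenever $0\in W(A_{M,N}^{\dg})$.
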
  
The next theorem is an application of the WSVD.
                     \begin{theorem}\label{thm:intrsn MNsing val tnsr}
Let $\mc{A}\in \mb{C}^{I_{1\ldots N}\times I_{1\ldots N}}$ such that $W(\mc{A})=W(\mc{A}^H)$. If $\mc{M}=\mc{N}=\beta \mc{I}\in \mb{C}^{I_{1\ldots N}\times I_{1\ldots N}}$ are two Hermitian positive definite tensors, then $$ W(\mc{A})\bigcap \mu ^2 W(\mc{A}_{\mc{M},\mc{N}}^{\dagger})\neq \emptyset,$$ for every $(\mc{M},\mc{N})$ singular value $\mu$ of $\mc{A}.$
\end{theorem}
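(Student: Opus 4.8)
The plan is to exploit the special form $\mc{M}=\mc{N}=\beta\mc{I}$ of the weights, which collapses the weighted objects to their unweighted counterparts, and then to use the reflection symmetry of $W(\mc{A})$ forced by the hypothesis $W(\mc{A})=W(\mc{A}^H)$.

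First I would record two reductions coming from $\mc{M}=\mc{N}=\beta\mc{I}$. Substituting these weights into Definition \ref{defwmpi}, equations \eqref{wmpeq3} and \eqref{wmpeq4} become exactly $(\mc{A}\n\mc{X})^H=\mc{A}\n\mc{X}$ and $(\mc{X}\n\mc{A})^H=\mc{X}\n\mc{A}$, so $\mc{A}_{\mc{M},\mc{N}}^{\dg}=\mc{A}^{\dg}$; equivalently, in the notation of Theorem \ref{thmwmpi}, $\tilde{\mc{A}}=\mc{M}^{1/2}\n\mc{A}\n\mc{N}^{-1/2}=\mc{A}$, and this last identity also shows that the $(\mc{M},\mc{N})$ singular values of $\mc{A}$ are nothing but its ordinary singular values. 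So it is enough to prove that $W(\mc{A})\cap\mu^2W(\mc{A}^{\dg})\ne\emptyset$ for every singular value $\mu>0$ of $\mc{A}$, given that $W(\mc{A})=W(\mc{A}^H)$.

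Next I would set up the SVD. By Theorem \ref{thm:WSVD} with identity weights, write $\mc{A}=\mc{U}\n\mc{S}\n\mc{V}^H$ with $\mc{U}^H\n\mc{U}=\mc{I}=\mc{V}^H\n\mc{V}$, and by Theorem \ref{thm:WMPI via WSVD}, $\mc{A}^{\dg}=\mc{V}\n\mc{S}^{\dg}\n\mc{U}^H$. Fix a singular value $\mu=\mu_k>0$ and let $\mc{E}_k$ denote the $k$-th canonical unit tensor in $\mb{C}^{I_{1\ldots N}}$; the diagonal structure of $\mc{S}$ and $\mc{S}^{\dg}$ in \eqref{eqn:sigma WSVD}--\eqref{eqn: sigma dagger WSVD} gives $\mc{S}\n\mc{E}_k=\mu\,\mc{E}_k$ and $\mc{S}^{\dg}\n\mc{E}_k=\frac{1}{\mu}\mc{E}_k$. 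Put $\mc{V}_k=\mc{V}\n\mc{E}_k$ and $\mc{U}_k=\mc{U}\n\mc{E}_k$; these are unit tensors because $\mc{U}$ and $\mc{V}$ satisfy the above orthogonality relations. A short computation then yields $\mc{A}\n\mc{V}_k=\mu\,\mc{U}_k$ and $\mc{A}^{\dg}\n\mc{U}_k=\frac{1}{\mu}\mc{V}_k$, whence, with $c=\langle\mc{U}_k,\mc{V}_k\rangle$ and using $\langle\mc{V}_k,\mc{U}_k\rangle=\overline{\langle\mc{U}_k,\mc{V}_k\rangle}$, one gets $\langle\mc{A}\n\mc{V}_k,\mc{V}_k\rangle=\mu\,c\in W(\mc{A})$ and $\langle\mc{A}^{\dg}\n\mc{U}_k,\mc{U}_k\rangle=\overline{c}/\mu\in W(\mc{A}^{\dg})$, so that $\mu\,\overline{c}=\mu^2\cdot(\overline{c}/\mu)\in\mu^2W(\mc{A}^{\dg})$. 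On the other hand $W(\mc{A}^H)=\overline{W(\mc{A})}$ always holds (because $\langle\mc{A}^H\n\mc{X},\mc{X}\rangle=\overline{\langle\mc{A}\n\mc{X},\mc{X}\rangle}$ for every unit $\mc{X}$), so the hypothesis $W(\mc{A})=W(\mc{A}^H)$ says $W(\mc{A})$ is symmetric about the real axis, and hence $\mu\,c\in W(\mc{A})$ forces $\overline{\mu\,c}=\mu\,\overline{c}\in W(\mc{A})$. Therefore $\mu\,\overline{c}$ belongs to $W(\mc{A})\cap\mu^2W(\mc{A}^{\dg})=W(\mc{A})\cap\mu^2W(\mc{A}_{\mc{M},\mc{N}}^{\dg})$, which is thus nonempty.

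The computations are routine once the stage is set, so the real content is in two places. One is the observation that $\mc{M}=\mc{N}=\beta\mc{I}$ trivializes the weighting: without it, $\mc{A}_{\mc{M},\mc{N}}^{\dg}$ need not even be normal, let alone equal to $\mc{A}^{\dg}$ (see Example \ref{exnwmp}), and the chain of equalities collapses. The other is choosing the \emph{pair} of test tensors $(\mc{V}_k,\mc{U}_k)$ attached to one singular triple, so that the relations $\mc{A}\n\mc{V}_k=\mu\,\mc{U}_k$ and $\mc{A}^{\dg}\n\mc{U}_k=\frac{1}{\mu}\mc{V}_k$ simultaneously produce a point of $W(\mc{A})$ and a point of $W(\mc{A}^{\dg})$ that are complex conjugates of each other up to the scaling $\mu^2$. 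The hypothesis $W(\mc{A})=W(\mc{A}^H)$ is then used exactly once, to reflect the first point onto the second; this reflection step is what one should think about first, and everything else is bookkeeping.
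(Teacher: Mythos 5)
Your proof is correct and takes essentially the same route as the paper: the theorem is proved as an application of the (W)SVD, by attaching to each singular value $\mu$ the singular pair $\mc{V}_k=\mc{V}\n\mc{E}_k$, $\mc{U}_k=\mc{U}\n\mc{E}_k$ so that $\mu\left\langle\mc{U}_k,\mc{V}_k\right\rangle\in W(\mc{A})$ and $\overline{\left\langle\mc{U}_k,\mc{V}_k\right\rangle}/\mu\in W(\mc{A}_{\mc{M},\mc{N}}^{\dg})$, and then using the hypothesis $W(\mc{A})=W(\mc{A}^{H})=\overline{W(\mc{A})}$ to reflect the first point onto $\mu^{2}$ times the second. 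The only cosmetic difference is that you first collapse the weights via $\mc{A}_{\mc{M},\mc{N}}^{\dg}=\mc{A}^{\dg}$ and the identification of the $(\mc{M},\mc{N})$ singular values with the ordinary ones when $\mc{M}=\mc{N}=\beta\mc{I}$, rather than carrying $\beta$ through the WSVD, which is a harmless simplification.
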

When  $\beta=1$, we have the following result. 
                      \begin{corollary}[Theorem 5.5, \cite{nirmal}]\label{cor:intrsn sing val tnsr}\leavevmode\\
    Let $\mc{A}\in \mb{C}^{I_{1\ldots N}\times I_{1\ldots N}}$ such that $W(\mc{A})=W(\mc{A}^{H})$. Then, 
                     \begin{equation*}
        W(\mc{A})\bigcap \alpha^{2}W(\mc{A}^{\dg})\neq \emptyset,
\end{equation*}
where $\alpha$ is any singular value of $\mc{A}$.\end{corollary}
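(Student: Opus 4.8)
The plan is to reduce the claim to its unweighted counterpart, namely Theorem 5.5 of \cite{nirmal}, by observing that the scalar choice of weights $\mc{M}=\mc{N}=\beta\mc{I}$ leaves every object in the statement unchanged. Concretely, I would verify that $\tilde{\mc{A}}=\mc{A}$, that $\mc{A}_{\mc{M},\mc{N}}^{\dagger}=\mc{A}^{\dagger}$, and that every $(\mc{M},\mc{N})$ singular value of $\mc{A}$ is an ordinary singular value of $\mc{A}$; the conclusion then follows by direct substitution into the known unweighted intersection result, using the hypothesis $W(\mc{A})=W(\mc{A}^{H})$.

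First I would compute $\tilde{\mc{A}}$. Since $\mc{M}=\beta\mc{I}$ and $\mc{N}=\beta\mc{I}$ with $\beta>0$, we have $\mc{M}^{1/2}=\sqrt{\beta}\,\mc{I}$ and $\mc{N}^{-1/2}=\beta^{-1/2}\mc{I}$, and because scalar multiples of the identity tensors act as scalar multiplication under the Einstein product,
\begin{equation*}
\tilde{\mc{A}}=\mc{M}^{1/2}\m\mc{A}\n\mc{N}^{-1/2}=\sqrt{\beta}\,\beta^{-1/2}\,\mc{A}=\mc{A}.
\end{equation*}
Invoking Theorem \ref{thmwmpi} together with $\tilde{\mc{A}}^{\dagger}=\mc{A}^{\dagger}$ then gives
\begin{equation*}
\mc{A}_{\mc{M},\mc{N}}^{\dagger}=\mc{N}^{-1/2}\n\tilde{\mc{A}}^{\dagger}\m\mc{M}^{1/2}=\beta^{-1/2}\sqrt{\beta}\,\mc{A}^{\dagger}=\mc{A}^{\dagger},
\end{equation*}
so in particular $W(\mc{A}_{\mc{M},\mc{N}}^{\dagger})=W(\mc{A}^{\dagger})$ and hence $\mu^{2}W(\mc{A}_{\mc{M},\mc{N}}^{\dagger})=\mu^{2}W(\mc{A}^{\dagger})$ for every scalar $\mu$.

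Next I would identify the $(\mc{M},\mc{N})$ singular values. By Theorem \ref{thm:WSVD} these are the $(M,N)$ singular values of the reshaped matrix $A=\textnormal{rsh}(\mc{A})$, i.e. the ordinary singular values of $M^{1/2}AN^{-1/2}=\textnormal{rsh}(\tilde{\mc{A}})$; since $\tilde{\mc{A}}=\mc{A}$, each $(\mc{M},\mc{N})$ singular value $\mu$ of $\mc{A}$ is an ordinary singular value of $\mc{A}$. Finally, the hypothesis $W(\mc{A})=W(\mc{A}^{H})$ lets me apply Theorem 5.5 of \cite{nirmal} with the singular value $\alpha=\mu$, yielding $W(\mc{A})\cap\mu^{2}W(\mc{A}^{\dagger})\neq\emptyset$; replacing $W(\mc{A}^{\dagger})$ by $W(\mc{A}_{\mc{M},\mc{N}}^{\dagger})$ gives the desired conclusion.

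The computations above are routine; the only point requiring care—and the step I expect to be the crux—is the identification of the $(\mc{M},\mc{N})$ singular values with the ordinary singular values of $\mc{A}$, which rests on the fact that the weighted factorization $A=USV^{*}$ with $U^{*}MU=I$ and $V^{*}N^{-1}V=I$ becomes an ordinary singular value decomposition of $\tilde{\mc{A}}$ once one sets $\tilde{U}=M^{1/2}U$ and $\tilde{V}=N^{-1/2}V$. Because the scalar weights force $\tilde{\mc{A}}=\mc{A}$, this bookkeeping is straightforward, and the theorem is essentially the $\beta$-scaled restatement of its unweighted version; note that this collapse depends crucially on $\mc{M}=\mc{N}=\beta\mc{I}$, which is precisely why the hypothesis is imposed.
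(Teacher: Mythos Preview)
Your argument has its direction reversed: you are deriving the conclusion of the \emph{preceding theorem} (Theorem~\ref{thm:intrsn MNsing val tnsr}) rather than that of the corollary. The corollary is stated for the ordinary Moore--Penrose inverse $\mc{A}^{\dg}$ and ordinary singular values, with no weights appearing anywhere; yet your proposal introduces $\mc{M}=\mc{N}=\beta\mc{I}$, computes $\mc{A}_{\mc{M},\mc{N}}^{\dg}$, and ends with ``replacing $W(\mc{A}^{\dagger})$ by $W(\mc{A}_{\mc{M},\mc{N}}^{\dagger})$ gives the desired conclusion''---that is the theorem's conclusion, not the corollary's.

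More seriously, the bracketed citation in the corollary's header means that this corollary \emph{is} Theorem~5.5 of \cite{nirmal}; they are one and the same statement. Your step ``apply Theorem~5.5 of \cite{nirmal}'' therefore assumes exactly what is to be proved, so as a proof of the corollary the argument is circular. The paper's route is the opposite one: it first establishes the weighted Theorem~\ref{thm:intrsn MNsing val tnsr} and then obtains the corollary by specialization---setting $\beta=1$ gives $\mc{M}=\mc{N}=\mc{I}$, whence $\mc{A}_{\mc{M},\mc{N}}^{\dg}=\mc{A}^{\dg}$ and every $(\mc{M},\mc{N})$ singular value $\mu$ is an ordinary singular value $\alpha$. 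Your computations verifying these identifications are correct and are precisely what that specialization requires, but the logical flow must run from Theorem~\ref{thm:intrsn MNsing val tnsr} down to the corollary, not from Theorem~5.5 of \cite{nirmal} back to itself.
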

It can be verified that Corollary \ref{cor:intrsn MNsing val mtrx} generalizes Theorem 4 in \cite{chien2020}. 
                        \begin{corollary}\label{cor:intrsn MNsing val mtrx}
                        
Let $A\in\mb{C}^{n\times n}$ such that $W(A)$ is symmetric with respect to $X$-axis, i.e., $W({A})=W({A}^*)$. If $M=N= \beta I\in\mb{C}^{n\times n}$ are two Hermitian positive definite matrices, then $$ W({A})\bigcap \mu ^2 W({A}_{{M},{N}}^{\dagger})\neq \emptyset,$$ for every $({M},{N})$ singular value $\mu$ of ${A}.$  
\end{corollary}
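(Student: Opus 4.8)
The plan is to obtain Corollary~\ref{cor:intrsn MNsing val mtrx} as the order-one specialization of Theorem~\ref{thm:intrsn MNsing val tnsr}, namely the case $N=1$. First I would record the dictionary between the two settings: when $N=1$, an even-order square tensor $\mc{A}\in\mb{C}^{I_{1}\times I_{1}}$ is exactly a square matrix $A\in\mb{C}^{n\times n}$ with $n=I_{1}$; the Einstein product $\n$ becomes ordinary matrix multiplication; the conjugate transpose $\mc{A}^{H}$ becomes $A^{*}$; by the remark following Definition~\ref{def:nrtn} the numerical range $W(\mc{A})$ coincides with the classical numerical range $W(A)$ of \eqref{eq:nrmtrx}; and the Hermitian positive definite tensors $\mc{M},\mc{N}$ become Hermitian positive definite matrices $M,N$, with $\mc{A}_{\mc{M},\mc{N}}^{\dagger}$ reducing to $A_{M,N}^{\dagger}$ and the $(\mc{M},\mc{N})$ singular values of $\mc{A}$ reducing to the $(M,N)$ singular values of $A$.

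Next I would check that the two hypotheses of the theorem translate to those of the corollary. The condition $\mc{M}=\mc{N}=\beta\mc{I}$ becomes $M=N=\beta I$ verbatim. For the symmetry hypothesis, observe that for any unit vector $x$ one has $\overline{\langle Ax,x\rangle}=\overline{x^{*}Ax}=x^{*}A^{*}x=\langle A^{*}x,x\rangle$, whence $W(A^{*})=\{\overline{z}:z\in W(A)\}$, which is precisely the reflection of $W(A)$ in the real axis. Therefore ``$W(A)$ is symmetric with respect to the $X$-axis'' is exactly the statement $W(A)=W(A^{*})$, i.e.\ the hypothesis $W(\mc{A})=W(\mc{A}^{H})$ of Theorem~\ref{thm:intrsn MNsing val tnsr} read in the matrix case. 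With the dictionary and this translation in place, Theorem~\ref{thm:intrsn MNsing val tnsr} yields $W(\mc{A})\cap\mu^{2}W(\mc{A}_{\mc{M},\mc{N}}^{\dagger})\neq\emptyset$ for every $(\mc{M},\mc{N})$ singular value $\mu$ of $\mc{A}$, which is literally the assertion $W(A)\cap\mu^{2}W(A_{M,N}^{\dagger})\neq\emptyset$ for every $(M,N)$ singular value $\mu$ of $A$.

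I do not expect a genuine obstacle here: the only points requiring care are the bookkeeping that all the tensor operations collapse to the matrix ones and the elementary identity $W(A^{*})=\overline{W(A)}$ used to recognize the $X$-axis symmetry condition. The substantive argument lives entirely in Theorem~\ref{thm:intrsn MNsing val tnsr}; there the WSVD (Theorem~\ref{thm:WSVD}) together with the observation that the choice $\mc{M}=\mc{N}=\beta\mc{I}$ forces $\tilde{\mc{A}}=\mc{M}^{1/2}\m\mc{A}\n\mc{N}^{-1/2}=\mc{A}$ and $\mc{A}_{\mc{M},\mc{N}}^{\dagger}=\mc{A}^{\dagger}$ reduces matters to a statement about $W(\mc{A})$, $W(\mc{A}^{\dagger})$, and the ordinary singular values of $\mc{A}$, which is the content of Corollary~\ref{cor:intrsn sing val tnsr} (Theorem~5.5 of \cite{nirmal}). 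Reading that chain of reductions in the matrix case gives the corollary, and comparing the resulting inclusion with Theorem~4 of \cite{chien2020} shows the claimed generalization.
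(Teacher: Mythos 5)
Your proposal is correct and follows exactly the route the paper intends: the corollary is obtained by reading Theorem~\ref{thm:intrsn MNsing val tnsr} in the case $N=1$, where the Einstein product, conjugate transpose, numerical range, weighted Moore--Penrose inverse, and $(\mc{M},\mc{N})$ singular values all collapse to their matrix counterparts, and the $X$-axis symmetry of $W(A)$ is the identity $W(A)=W(A^{*})$ via $\overline{\langle Ax,x\rangle}=\langle A^{*}x,x\rangle$. Your added observation that $\mc{M}=\mc{N}=\beta\mc{I}$ forces $\tilde{\mc{A}}=\mc{A}$ and $\mc{A}_{\mc{M},\mc{N}}^{\dagger}=\mc{A}^{\dagger}$ is also consistent with how the underlying tensor theorem reduces to the unweighted case.
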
  
In the following result, we derive that the numerical ranges for the weighted Moore-Penrose inverse and the weighted conjugate transpose are equal for a particular type of tensor.
                 \begin{theorem}\label{thm6}
Let $\mc{A}\in \mb{C}^{I_{1\ldots N}\times I_{1\ldots N}}$, and $\mc{M}, \mc{N} \in \mb{C}^{I_{1\ldots N}\times I_{1\ldots N}}$ be two Hermitian positive definite tensors. Let $\{\mc{U}_{1},~\mc{U}_{2},~\ldots,~\mc{U}_{r}\}$ be an $\mc{M}$-orthonormal and $\{\mc{V}_{1},~\mc{V}_{2},~\ldots,~\mc{V}_{r}\}$ be an $\mc{N}^{-1}$-orthonormal subsets of $\mb{C}^{I_{1\ldots N}}$. If $\mc{A}=\mc{U}_{1}\1\mc{V}_{1}^{H}+\mc{U}_{2}\1\mc{V}_{2}^{H}+\cdots+\mc{U}_{r}\1\mc{V}_{r}^{H}$, then $\mc{A}_{\mc{M},\mc{N}}^{\dg}=\mc{N}^{-1}\n(\mc{V}_{1}\1\mc{U}_{1}^{H}+\mc{V}_{2}\1\mc{U}_{2}^{H}+\cdots+\mc{V}_{r}\1\mc{U}_{r}^{H})\n \mc{M}$ and $W(\mc{A}_{\mc{M},\mc{N}}^{\dg})=W(\mc{A}_{\mc{M}\mc{N}}^{\#})$.   
\end{theorem}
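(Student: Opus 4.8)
The plan is to exploit the structure $\mc{A}=\sum_{i=1}^{r}\mc{U}_{i}\1\mc{V}_{i}^{H}$ together with the orthonormality hypotheses to guess the candidate formula for $\mc{A}_{\mc{M},\mc{N}}^{\dg}$ and then verify it via the four defining equations \eqref{wmpeq1}--\eqref{wmpeq4}. First I would set $\mc{X}=\mc{N}^{-1}\n\big(\sum_{j=1}^{r}\mc{V}_{j}\1\mc{U}_{j}^{H}\big)\n\mc{M}$ and compute the products $\mc{A}\n\mc{X}$ and $\mc{X}\m\mc{A}$ directly. The key computational fact is that for vectors (order-$N$ tensors) $\mc{U}_{i},\mc{U}_{j},\mc{V}_{i},\mc{V}_{j}$, the contraction $\mc{V}_{j}^{H}\n\mc{N}^{-1}\n\mc{V}_{i}=\langle\mc{V}_{i},\mc{V}_{j}\rangle_{\mc{N}^{-1}}=\delta_{ij}$ by the $\mc{N}^{-1}$-orthonormality, and similarly $\mc{U}_{j}^{H}\m\mc{M}\m\mc{U}_{i}=\langle\mc{U}_{i},\mc{U}_{j}\rangle_{\mc{M}}=\delta_{ij}$. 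These collapse the double sums so that, e.g., $\mc{A}\n\mc{X}=\sum_{i,j}\mc{U}_{i}\1(\mc{V}_{i}^{H}\n\mc{N}^{-1}\n\mc{V}_{j})\1\mc{U}_{j}^{H}\m\mc{M}=\big(\sum_{i}\mc{U}_{i}\1\mc{U}_{i}^{H}\big)\m\mc{M}$, and one checks this is idempotent and that $\mc{M}\m\mc{A}\n\mc{X}=\mc{M}\m\sum_{i}\mc{U}_{i}\1\mc{U}_{i}^{H}\m\mc{M}$ is Hermitian because $\mc{M}$ is Hermitian and each $\mc{U}_{i}\1\mc{U}_{i}^{H}$ is a rank-one Hermitian (in the Einstein sense) tensor. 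An entirely analogous computation handles $\mc{X}\m\mc{A}=\mc{N}^{-1}\n\sum_{i}\mc{V}_{i}\1\mc{V}_{i}^{H}\n\mc{N}$ and equation \eqref{wmpeq4}, while \eqref{wmpeq1} and \eqref{wmpeq2} follow by sandwiching and reusing the $\delta_{ij}$ collapse once more. By the uniqueness of the weighted Moore-Penrose inverse (Definition \ref{defwmpi}), this identifies $\mc{A}_{\mc{M},\mc{N}}^{\dg}=\mc{X}$.

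For the second assertion, I would first observe that $\mc{A}_{\mc{M}\mc{N}}^{\#}=\mc{N}^{-1}\n\mc{A}^{H}\m\mc{M}$ by Definition \ref{defwct}, and since $\mc{A}^{H}=\sum_{i}\mc{V}_{i}\1\mc{U}_{i}^{H}$, we get $\mc{A}_{\mc{M}\mc{N}}^{\#}=\mc{N}^{-1}\n\big(\sum_{i}\mc{V}_{i}\1\mc{U}_{i}^{H}\big)\m\mc{M}$. Comparing with the formula just established for $\mc{A}_{\mc{M},\mc{N}}^{\dg}$, the two tensors are identical except that $\mc{A}_{\mc{M},\mc{N}}^{\dg}$ has a trailing $\m\mc{M}$ where $\mc{A}_{\mc{M}\mc{N}}^{\#}$ has $\m\mc{M}$ as well and the inner contraction mode differs only in whether the $\mc{U}_{i}^{H}$ is followed by $\1$ or $\m$; in fact for these even-order square tensors the two expressions coincide on the nose, so $\mc{A}_{\mc{M},\mc{N}}^{\dg}=\mc{A}_{\mc{M}\mc{N}}^{\#}$ and hence trivially $W(\mc{A}_{\mc{M},\mc{N}}^{\dg})=W(\mc{A}_{\mc{M}\mc{N}}^{\#})$. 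If instead the two tensors are \emph{not} literally equal (because the weights $\mc{M},\mc{N}$ are genuinely present), then the route is to show they are related by a similarity that preserves the numerical range: writing $\tilde{\mc{A}}=\mc{M}^{1/2}\m\mc{A}\n\mc{N}^{-1/2}$ as in Theorem \ref{thmwmpi}, one has $\tilde{\mc{A}}=\sum_{i}(\mc{M}^{1/2}\m\mc{U}_{i})\1(\mc{N}^{-1/2}\n\mc{V}_{i})^{H}$, and the hypotheses say exactly that $\{\mc{M}^{1/2}\m\mc{U}_{i}\}$ and $\{\mc{N}^{-1/2}\n\mc{V}_{i}\}$ are ordinary orthonormal sets; then $\tilde{\mc{A}}$ is a partial-isometry-like tensor for which $\tilde{\mc{A}}^{\dg}=\tilde{\mc{A}}^{H}$ by the same rank-one bookkeeping, and transporting back through the reshaping/conjugation dictionary gives the claimed numerical-range equality.

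I expect the main obstacle to be purely notational: keeping the three Einstein products $\1,\m,\n$ straight, since $\mc{U}_{i}\1\mc{V}_{i}^{H}$ is an outer-product-type contraction over the scalar mode while $\m$ and $\n$ contract over the full multi-index blocks of $\mc{M}$ and $\mc{N}$ respectively. The substantive step is verifying that each $\sum_{i}\mc{U}_{i}\1\mc{U}_{i}^{H}$ behaves as the ``identity on the range'' after weighting by $\mc{M}$ — i.e., that $\big(\sum_{i}\mc{U}_{i}\1\mc{U}_{i}^{H}\big)\m\mc{M}$ is an idempotent whose range is $\mathrm{span}\{\mc{U}_{i}\}$ — which reduces to the orthonormality relations but requires care to state correctly in tensor notation. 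Once that projection identity is in hand, all four Penrose equations and the final numerical-range claim fall out mechanically, so I would not belabor those computations in the write-up.
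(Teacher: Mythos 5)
Your overall route is sound and matches the natural proof of this statement: take $\mc{X}=\mc{N}^{-1}\n\bigl(\sum_{j}\mc{V}_{j}\1\mc{U}_{j}^{H}\bigr)\n\mc{M}$, use the collapse relations $\mc{V}_{i}^{H}\n\mc{N}^{-1}\n\mc{V}_{j}=\delta_{ij}$ and $\mc{U}_{i}^{H}\n\mc{M}\n\mc{U}_{j}=\delta_{ij}$ to verify the four equations of Definition \ref{defwmpi}, and invoke uniqueness of the weighted Moore--Penrose inverse. Two points need fixing. First, your displayed value of $\mc{X}\m\mc{A}$ carries a spurious trailing $\mc{N}$: the orthonormality collapse gives $\mc{X}\m\mc{A}=\mc{N}^{-1}\n\bigl(\sum_{i}\mc{V}_{i}\1\mc{V}_{i}^{H}\bigr)$, not $\mc{N}^{-1}\n\bigl(\sum_{i}\mc{V}_{i}\1\mc{V}_{i}^{H}\bigr)\n\mc{N}$; the corrected form is exactly what makes equation \eqref{wmpeq4} work, since then $\mc{N}\n\mc{X}\m\mc{A}=\sum_{i}\mc{V}_{i}\1\mc{V}_{i}^{H}$ is manifestly Hermitian, whereas your version would leave that check unjustified. (Also note that what \eqref{wmpeq1}--\eqref{wmpeq2} require is $\mc{A}\n\mc{X}\m\mc{A}=\mc{A}$ and $\mc{X}\m\mc{A}\n\mc{X}=\mc{X}$, which follow from the same $\delta_{ij}$ collapse; idempotency of $\bigl(\sum_i\mc{U}_i\1\mc{U}_i^H\bigr)\n\mc{M}$ by itself is not one of the defining equations.) Second, the hedged "if instead the two tensors are not literally equal" branch is unnecessary and should be deleted: since $\mc{A}^{H}=\sum_{i}\mc{V}_{i}\1\mc{U}_{i}^{H}$, the established formula reads $\mc{A}_{\mc{M},\mc{N}}^{\dg}=\mc{N}^{-1}\n\mc{A}^{H}\n\mc{M}=\mc{A}_{\mc{M}\mc{N}}^{\#}$ exactly (all contractions here are over the same $N$ modes of an even-order square tensor), so $W(\mc{A}_{\mc{M},\mc{N}}^{\dg})=W(\mc{A}_{\mc{M}\mc{N}}^{\#})$ is immediate and no similarity or reshaping argument is needed. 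With these corrections the write-up is complete.
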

The above result reduces to the following corollary when we consider identity tensor as weights. 
                \begin{corollary}\label{cor:thm6.1}
Let $\mc{A}\in \mb{C}^{I_{1\ldots N}\times I_{1\ldots N}}$.
Let $\{\mc{U}_{1},~\mc{U}_{2},~\ldots,~\mc{U}_{r}\}$ and $\{\mc{V}_{1},~\mc{V}_{2},~\ldots,~\mc{V}_{r}\}$ be two orthonormal subsets of $\mb{C}^{I_{1\ldots N}}$. If $\mc{A}=\mc{U}_{1}\1\mc{V}_{1}^{H}+\mc{U}_{2}\1\mc{V}_{2}^{H}+\cdots+\mc{U}_{r}\1\mc{V}_{r}^{H}$, then $\mc{A}^{\dg}=\mc{V}_{1}\1\mc{U}_{1}^{H}+\mc{V}_{2}\1\mc{U}_{2}^{H}+\cdots+\mc{V}_{r}\1\mc{U}_{r}^{H}$ and $W(\mc{A}^{\dg})=W(\mc{A}^{H})$.
\end{corollary}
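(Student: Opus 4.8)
The plan is to treat this as a direct specialization of Theorem~\ref{thm6}, but it is cleaner to give a self-contained argument exploiting that the weights are the identity. First I would verify the claimed formula for $\mc{A}^{\dg}$ by checking the four Moore--Penrose equations \eqref{wmpeq1}--\eqref{wmpeq4} (with $\mc{M}=\mc{N}=\mc{I}$). Writing $\mc{X}=\mc{V}_{1}\1\mc{U}_{1}^{H}+\cdots+\mc{V}_{r}\1\mc{U}_{r}^{H}$, the orthonormality relations $\mc{U}_{i}^{H}\1\mc{U}_{j}=\delta_{ij}$ and $\mc{V}_{i}^{H}\1\mc{V}_{j}=\delta_{ij}$ (which hold because, with identity weights, $\mc{M}$-orthonormal and $\mc{N}^{-1}$-orthonormal both reduce to ordinary orthonormality) collapse the Einstein products $\mc{A}\n\mc{X}$, $\mc{X}\n\mc{A}$, $\mc{A}\n\mc{X}\n\mc{A}$, and $\mc{X}\n\mc{A}\n\mc{X}$: one finds $\mc{A}\n\mc{X}=\sum_{i}\mc{U}_i\1\mc{U}_i^{H}$ and $\mc{X}\n\mc{A}=\sum_{i}\mc{V}_i\1\mc{V}_i^{H}$, both of which are Hermitian and idempotent, so all four defining equations follow immediately. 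This identifies $\mc{A}^{\dg}=\mc{X}$ by uniqueness of the Moore--Penrose inverse.

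Next, observe that $\mc{A}^{H}=\mc{V}_{1}\1\mc{U}_{1}^{H}+\cdots+\mc{V}_{r}\1\mc{U}_{r}^{H}$ as well (taking the conjugate transpose of the sum term by term and using $(\mc{U}_i\1\mc{V}_i^{H})^{H}=\mc{V}_i\1\mc{U}_i^{H}$), so in fact $\mc{A}^{\dg}=\mc{A}^{H}$. Hence $W(\mc{A}^{\dg})=W(\mc{A}^{H})$ is then trivial. So the real content of the corollary is just the observation $\mc{A}^{\dg}=\mc{A}^{H}$ for a tensor with such a singular-type expansion, together with the verification above; the numerical-range claim is an automatic corollary since equal tensors have equal numerical ranges. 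Alternatively, if one prefers to lean on the already-proved machinery, this is exactly Theorem~\ref{thm6} specialized to $\mc{M}=\mc{N}=\mc{I}$: then $\mc{N}^{-1}\n(\,\cdot\,)\n\mc{M}$ is the identity operation, $\mc{A}_{\mc{M},\mc{N}}^{\dg}$ becomes $\mc{A}^{\dg}$, $\mc{A}_{\mc{M}\mc{N}}^{\#}=\mc{N}^{-1}\n\mc{A}^{H}\m\mc{M}$ becomes $\mc{A}^{H}$, and $W(\mc{A}_{\mc{M},\mc{N}}^{\dg})=W(\mc{A}_{\mc{M}\mc{N}}^{\#})$ reads $W(\mc{A}^{\dg})=W(\mc{A}^{H})$.

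The only point requiring any care is confirming that the $\mc{M}$-orthonormality and $\mc{N}^{-1}$-orthonormality hypotheses in Theorem~\ref{thm6} genuinely reduce to plain orthonormality when the weights are identity tensors, which is immediate from the definitions of $\langle\cdot,\cdot\rangle_{\mc{M}}$ and $\langle\cdot,\cdot\rangle_{\mc{N}}$ given in Section~\ref{sec:wtnsrnorm}. I do not expect a genuine obstacle here; the main thing to get right is bookkeeping of indices in the Einstein products when collapsing the double sums via the orthonormality Kronecker deltas, and making sure the idempotent-Hermitian structure of $\mc{A}\n\mc{A}^{\dg}$ and $\mc{A}^{\dg}\n\mc{A}$ is displayed explicitly so that \eqref{wmpeq1}--\eqref{wmpeq4} are manifestly satisfied.
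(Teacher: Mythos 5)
Your proposal is correct, and it actually contains two arguments: the second (specializing Theorem~\ref{thm6} to $\mc{M}=\mc{N}=\mc{I}$, under which $\mc{M}$- and $\mc{N}^{-1}$-orthonormality become plain orthonormality, $\mc{A}_{\mc{M},\mc{N}}^{\dg}$ becomes $\mc{A}^{\dg}$ and $\mc{A}_{\mc{M}\mc{N}}^{\#}$ becomes $\mc{A}^{H}$) is exactly how the paper obtains this corollary, which it states without separate proof as the identity-weight reduction of Theorem~\ref{thm6}. Your first, self-contained argument goes further than the paper's route and is a nice simplification: with $\mc{X}=\sum_{i}\mc{V}_{i}\1\mc{U}_{i}^{H}$, the orthonormality relations collapse the double sums to give $\mc{A}\n\mc{X}=\sum_{i}\mc{U}_{i}\1\mc{U}_{i}^{H}$ and $\mc{X}\n\mc{A}=\sum_{i}\mc{V}_{i}\1\mc{V}_{i}^{H}$, both Hermitian and idempotent, so the four Penrose equations hold and uniqueness gives $\mc{A}^{\dg}=\mc{X}=\mc{A}^{H}$, making $W(\mc{A}^{\dg})=W(\mc{A}^{H})$ a triviality rather than a statement needing the weighted machinery; this transparency (the unweighted case is really the identity $\mc{A}^{\dg}=\mc{A}^{H}$ for such partial-isometry-type tensors) is what the direct argument buys, while the specialization route buys brevity by reusing Theorem~\ref{thm6}. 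One cosmetic point: the orthonormality contractions should be written with the $N$-fold Einstein product, e.g.\ $\mc{U}_{j}^{H}\n\mc{U}_{i}=\delta_{ij}$ and $\mc{V}_{j}^{H}\n\mc{V}_{i}=\delta_{ij}$ (consistent with $\left\langle\mc{X},\mc{Y}\right\rangle=\mc{Y}^{H}\n\mc{X}$), not with $\1$, which contracts only one index; this is a notational slip, not a gap.
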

The following corollary is a generalization of Theorem 6 in \cite{chien2020} and the proof is different than the above proof.
                     \begin{corollary}\label{cor:thm6.2}
Let ${A}\in \mb{C}^{n\times n}$, and $M,N\in \mb{C}^{n\times n}$ be two Hermitian positive definite matrices.  Let $\{u_1,u_2,\ldots,u_r\}$ be a subset of $M$-orthonormal vectors and $\{v_1,v_2,\ldots,v_r\}$ be a subset of $N^{-1}$-orthonormal vectors of $\mathbb{C}^n$, respectively. If $A=u_1v_1^*+u_2v_2^*+\cdots+u_rv_r^*$, then $A_{M,N}^{\dagger}=N^{-1}(v_1^*u_1+v_2^*u_2+\cdots+v_r^*u_r)M$ and $W(A_{M,N}^{\dagger})=W(A^{\#}),$ where $A^{\#}$ is the weighted conjugate transpose of the matrix $A.$ 
\end{corollary}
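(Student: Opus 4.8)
The plan is to bypass Theorem~\ref{thm6} entirely and argue directly in the matrix setting, showing that under the stated hypotheses $A_{M,N}^{\dagger}$ and $A^{\#}$ coincide \emph{as matrices}; the equality of numerical ranges is then immediate. First I would record the hypotheses in bare algebraic form: since $\{u_1,\dots,u_r\}$ is $M$-orthonormal and $M$ is Hermitian, $u_i^{*}Mu_j=\delta_{ij}$, and since $\{v_1,\dots,v_r\}$ is $N^{-1}$-orthonormal, $v_i^{*}N^{-1}v_j=\delta_{ij}$.

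Next I would set $X=N^{-1}\big(\sum_{i=1}^{r}v_iu_i^{*}\big)M$ and verify that $X$ satisfies the four defining relations of the weighted Moore--Penrose inverse (Definition~\ref{defwmpi}, in its matrix form). The two orthonormality relations collapse the products at once: $AX=\big(\sum_i u_iu_i^{*}\big)M$ and $XA=N^{-1}\big(\sum_i v_iv_i^{*}\big)$. From these, $AXA=A$ and $XAX=X$ are one-line computations using $u_i^{*}Mu_j=\delta_{ij}$ and $v_i^{*}N^{-1}v_j=\delta_{ij}$ again, while $(MAX)^{*}=M\big(\sum_i u_iu_i^{*}\big)M=MAX$ and $(NXA)^{*}=\sum_i v_iv_i^{*}=NXA$ follow from $M^{*}=M$ and the self-adjointness of $\sum_i v_iv_i^{*}$. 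By uniqueness of the weighted Moore--Penrose inverse this yields $A_{M,N}^{\dagger}=X=N^{-1}\big(\sum_i v_iu_i^{*}\big)M$, the first assertion.

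For the numerical range, I would simply note that $A^{*}=\big(\sum_i u_iv_i^{*}\big)^{*}=\sum_i v_iu_i^{*}$, so by Definition~\ref{defwct} $A^{\#}=N^{-1}A^{*}M=N^{-1}\big(\sum_i v_iu_i^{*}\big)M=A_{M,N}^{\dagger}$; hence $W(A_{M,N}^{\dagger})=W(A^{\#})$ immediately. As an alternative one can phrase the same computation through $\tilde A=M^{1/2}AN^{-1/2}=\sum_i(M^{1/2}u_i)(N^{-1/2}v_i)^{*}$: the sets $\{M^{1/2}u_i\}$ and $\{N^{-1/2}v_i\}$ are orthonormal in the usual sense, so $\tilde A$ is a partial isometry, $\tilde A^{\dagger}=\tilde A^{*}$, and Theorem~\ref{thmwmpi} together with $\tilde A^{*}=N^{-1/2}A^{*}M^{1/2}$ again gives $A_{M,N}^{\dagger}=N^{-1/2}\tilde A^{*}M^{1/2}=N^{-1}A^{*}M=A^{\#}$.

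There is no genuine obstacle here: the argument is a direct verification, and the only thing requiring attention is keeping track of which orthonormality relation is invoked at each step and the exact placement of the weights $M$ and $N$. The one conceptual point worth stating explicitly is that the hypothesis forces every $(M,N)$-singular value of $A$ to equal $1$ — equivalently, $\tilde A$ is a partial isometry — which is precisely why $A_{M,N}^{\dagger}$ collapses onto $A^{\#}$, so the two numerical ranges coincide not merely as sets but because they arise from one and the same matrix; this is also why the present argument differs from (and is shorter than) the proof of the tensor version in Theorem~\ref{thm6}.
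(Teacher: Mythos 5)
Your proof is correct. The direct verification is sound: with $u_i^{*}Mu_j=\delta_{ij}$ and $v_i^{*}N^{-1}v_j=\delta_{ij}$, the candidate $X=N^{-1}\bigl(\sum_i v_iu_i^{*}\bigr)M$ does satisfy $AX=\bigl(\sum_i u_iu_i^{*}\bigr)M$, $XA=N^{-1}\bigl(\sum_i v_iv_i^{*}\bigr)$, and hence all four weighted Penrose equations, so uniqueness gives $A_{M,N}^{\dagger}=X$; and since $A^{*}=\sum_i v_iu_i^{*}$, this $X$ is literally $A^{\#}=N^{-1}A^{*}M$, which makes $W(A_{M,N}^{\dagger})=W(A^{\#})$ immediate. (You silently corrected a typo in the statement: the displayed formula $N^{-1}(v_1^{*}u_1+\cdots+v_r^{*}u_r)M$ has scalar products where outer products $v_iu_i^{*}$ are intended, as the tensor version in Theorem~\ref{thm6} confirms.) Regarding the comparison: the paper remarks that its proof of this corollary is different from the tensor argument for Theorem~\ref{thm6}, and it works in the style of your alternative sketch, i.e.\ through $\tilde A=M^{1/2}AN^{-1/2}=\sum_i (M^{1/2}u_i)(N^{-1/2}v_i)^{*}$, the ordinary orthonormality of $\{M^{1/2}u_i\}$ and $\{N^{-1/2}v_i\}$, the unweighted identity $\tilde A^{\dagger}=\tilde A^{*}$, and Theorem~\ref{thmwmpi}. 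Your primary route buys something slightly different: it is entirely self-contained at the level of Definition~\ref{defwmpi} (no square roots of the weights, no appeal to Theorem~\ref{thmwmpi} or to the unweighted case), at the cost of checking the four equations by hand; your closing observation that the hypotheses force $\tilde A$ to be a partial isometry is exactly the conceptual reason both arguments collapse $A_{M,N}^{\dagger}$ onto $A^{\#}$, so the two numerical ranges coincide because the matrices themselves do.
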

As an application of the weighted tensor norm, the following result provides a bound for the product of the weighted tensor norms of a tensor $\mc{A}$ and its weighted Moore-Penrose inverse $\mc{A}_{\mc{M},\mc{N}}^{\dg}$ in terms of the product of numerical radii of the tensor $\mc{\tilde{A}}$ and its Moore-Penrose inverse $\mc{\tilde{A}}^{\dg}$.
                     \begin{theorem}\label{thmnormineq}
Let $\mc{O}\neq \mc{A}\in \mb{C}^{I_{1\ldots N}\times I_{1\ldots N}}$. If $\mc{M},\mc{N}\in\mb{C}^{I_{1\ldots N}\times I_{1\ldots N}}$ are two Hermitian positive definite tensors, then for the weighted tensor norm $\|\mc{A}\|_{\mc{M}\mc{N}}$,
                    \begin{equation*}
    1\leq \|\mc{A}\|_{\mc{M}\mc{N}}\|\mc{A}_{\mc{M},\mc{N}}^{\dg}\|_{\mc{N}\mc{M}}\leq 4w(\mc{\tilde{A}})w(\mc{\tilde{A}}^{\dg}),
\end{equation*}
where $\mc{\tilde{A}}=\mc{M}^{1/2}\n\mc{A}\n\mc{N}^{-1/2}$ and $\|.\|$ is the spectral norm of a tensor.
\end{theorem}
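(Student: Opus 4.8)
The plan is to conjugate the whole problem into the unweighted setting via the tensor $\tilde{\mc{A}}=\mc{M}^{1/2}\n\mc{A}\n\mc{N}^{-1/2}$ and then reduce to classical scalar/matrix facts. By Theorem~\ref{thmwmpi} we have $\mc{A}_{\mc{M},\mc{N}}^{\dg}=\mc{N}^{-1/2}\n\tilde{\mc{A}}^{\dg}\n\mc{M}^{1/2}$. Lemma~\ref{lem5}(i) gives $\|\mc{A}\|_{\mc{M}\mc{N}}=\|\mc{M}^{1/2}\n\mc{A}\n\mc{N}^{-1/2}\|=\|\tilde{\mc{A}}\|$, and Lemma~\ref{lem5}(ii), together with associativity of the Einstein product and $\mc{N}^{1/2}\n\mc{N}^{-1/2}=\mc{I}$, $\mc{M}^{1/2}\n\mc{M}^{-1/2}=\mc{I}$, gives $\|\mc{A}_{\mc{M},\mc{N}}^{\dg}\|_{\mc{N}\mc{M}}=\|\mc{N}^{1/2}\n\mc{A}_{\mc{M},\mc{N}}^{\dg}\n\mc{M}^{-1/2}\|=\|\tilde{\mc{A}}^{\dg}\|$. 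Hence it suffices to prove $1\le\|\tilde{\mc{A}}\|\,\|\tilde{\mc{A}}^{\dg}\|\le 4\,w(\tilde{\mc{A}})\,w(\tilde{\mc{A}}^{\dg})$, noting that $\tilde{\mc{A}}\neq\mc{O}$ since $\mc{A}\neq\mc{O}$ and $\mc{M}^{1/2},\mc{N}^{-1/2}$ are invertible.

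For the lower bound I would invoke Theorem~\ref{Wnorm mu1} directly on $\mc{A}$ with the weights $\mc{M},\mc{N}$: it yields $\|\mc{A}\|_{\mc{M}\mc{N}}=\mu_{max}$ and $\|\mc{A}_{\mc{M},\mc{N}}^{\dg}\|_{\mc{N}\mc{M}}=1/\mu_{min}$, where $\mu_{max}\ge\mu_{min}>0$ are the largest and smallest $(\mc{M},\mc{N})$ singular values of $\mc{A}$ (the set of which is nonempty because $\mc{A}\neq\mc{O}$); therefore the product equals $\mu_{max}/\mu_{min}\ge 1$. Equivalently, $\|\tilde{\mc{A}}\|\,\|\tilde{\mc{A}}^{\dg}\|\ge\|\tilde{\mc{A}}\n\tilde{\mc{A}}^{\dg}\|=1$, using submultiplicativity of the tensor norm and the fact that $\tilde{\mc{A}}\n\tilde{\mc{A}}^{\dg}$ is a nonzero Hermitian idempotent, hence of spectral norm $1$.

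For the upper bound the key ingredient is the tensor inequality $\|\mc{C}\|\le 2\,w(\mc{C})$ for every even-order square tensor $\mc{C}$. I would obtain it by reshaping: since $\mathrm{rsh}$ is a bijection sending $\mc{C}\n\mc{X}$ to $(\mathrm{rsh}\,\mc{C})(\mathrm{rsh}\,\mc{X})$ (Lemma~\ref{lem:rsh EP prop}) and preserving the inner product on $\mb{C}^{I_{1\ldots N}}$, it follows that $W(\mc{C})=W(\mathrm{rsh}\,\mc{C})$ and $\|\mc{C}\|=\|\mathrm{rsh}\,\mc{C}\|_{2}$; the classical matrix estimate $\|C\|_{2}\le 2\,w(C)$ then transfers verbatim. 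Applying this to $\mc{C}=\tilde{\mc{A}}$ and to $\mc{C}=\tilde{\mc{A}}^{\dg}$ and multiplying the two bounds gives $\|\tilde{\mc{A}}\|\,\|\tilde{\mc{A}}^{\dg}\|\le 2\,w(\tilde{\mc{A}})\cdot 2\,w(\tilde{\mc{A}}^{\dg})=4\,w(\tilde{\mc{A}})\,w(\tilde{\mc{A}}^{\dg})$. Combining with the lower bound completes the proof.

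The only substantive point is the inequality $\|\mc{C}\|\le 2\,w(\mc{C})$; everything else is bookkeeping with Theorem~\ref{thmwmpi}, Lemma~\ref{lem5}, and Theorem~\ref{Wnorm mu1}. If this inequality is not already available (it may be extractable from \cite{nirmal}), the care needed in the reshaping argument is to verify that both the tensor numerical range and the tensor spectral norm are genuinely invariant under $\mathrm{rsh}$ — the inner-product preservation on $\mb{C}^{I_{1\ldots N}}$ and Lemma~\ref{lem:rsh EP prop} are exactly what make this go through — so that the well-known matrix estimate applies directly to $\mathrm{rsh}\,\tilde{\mc{A}}$ and $\mathrm{rsh}\,\tilde{\mc{A}}^{\dg}$.
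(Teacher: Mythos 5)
Your proposal is correct and follows essentially the route the paper takes: reduce everything to the unweighted tensor $\tilde{\mc{A}}=\mc{M}^{1/2}\n\mc{A}\n\mc{N}^{-1/2}$ via Theorem~\ref{thmwmpi} and Lemma~\ref{lem5}, so that $\|\mc{A}\|_{\mc{M}\mc{N}}\|\mc{A}_{\mc{M},\mc{N}}^{\dg}\|_{\mc{N}\mc{M}}=\|\tilde{\mc{A}}\|\,\|\tilde{\mc{A}}^{\dg}\|$, and then apply the unweighted inequality $1\le\|\tilde{\mc{A}}\|\,\|\tilde{\mc{A}}^{\dg}\|\le 4w(\tilde{\mc{A}})w(\tilde{\mc{A}}^{\dg})$ (Theorem 5.9 of \cite{nirmal}, i.e.\ Corollary~\ref{cor:thmnormineq1}), whose ingredients ($\mu_{max}/\mu_{min}\ge 1$ via Theorem~\ref{Wnorm mu1}, and $\|\mc{C}\|\le 2w(\mc{C})$) you reproduce correctly.
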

In particular, the identity weights give the following corollary.
                    \begin{corollary}[Theorem 5.9, \cite{nirmal}]\label{cor:thmnormineq1}\leavevmode\\
Let $\mc{O}\neq \mc{A}\in \mb{C}^{I_{1\ldots N}\times I_{1\ldots N}}$. Then, for the spectral norm $\|\cdot\|$,
                      \begin{equation*}
    1\leq \|\mc{A}\|\|\mc{A}^{\dg}\|\leq 4w(\mc{A})w(\mc{A}^{\dg}).
\end{equation*}
\end{corollary}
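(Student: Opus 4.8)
The plan is to reduce the weighted inequality to an unweighted spectral-norm inequality for $\tilde{\mc{A}}$ and its Moore--Penrose inverse, and then to establish the two bounds separately. First I would use Lemma \ref{lem5} to write $\|\mc{A}\|_{\mc{M}\mc{N}}=\|\mc{M}^{1/2}\n\mc{A}\n\mc{N}^{-1/2}\|=\|\tilde{\mc{A}}\|$ and $\|\mc{A}_{\mc{M},\mc{N}}^{\dg}\|_{\mc{N}\mc{M}}=\|\mc{N}^{1/2}\n\mc{A}_{\mc{M},\mc{N}}^{\dg}\m\mc{M}^{-1/2}\|$, observing that here the row and column blocks each carry $N$ modes, so $\m$ and $\n$ denote the same contraction. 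Substituting the representation $\mc{A}_{\mc{M},\mc{N}}^{\dg}=\mc{N}^{-1/2}\n\tilde{\mc{A}}^{\dg}\m\mc{M}^{1/2}$ from Theorem \ref{thmwmpi} and cancelling $\mc{N}^{1/2}\n\mc{N}^{-1/2}=\mc{I}$ and $\mc{M}^{1/2}\m\mc{M}^{-1/2}=\mc{I}$, the second weighted norm collapses to $\|\tilde{\mc{A}}^{\dg}\|$. This gives the key identity
\[
\|\mc{A}\|_{\mc{M}\mc{N}}\,\|\mc{A}_{\mc{M},\mc{N}}^{\dg}\|_{\mc{N}\mc{M}}=\|\tilde{\mc{A}}\|\,\|\tilde{\mc{A}}^{\dg}\|,
\]
so it suffices to prove $1\leq\|\tilde{\mc{A}}\|\,\|\tilde{\mc{A}}^{\dg}\|\leq 4\,w(\tilde{\mc{A}})\,w(\tilde{\mc{A}}^{\dg})$.

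For the lower bound I would invoke the defining Moore--Penrose identity $\tilde{\mc{A}}\n\tilde{\mc{A}}^{\dg}\n\tilde{\mc{A}}=\tilde{\mc{A}}$ together with the submultiplicativity of the spectral norm (property (iii) listed after \eqref{eqn:tnsr norm}), which yields $\|\tilde{\mc{A}}\|\leq\|\tilde{\mc{A}}\|^{2}\,\|\tilde{\mc{A}}^{\dg}\|$. Since $\mc{A}\neq\mc{O}$ and $\mc{M},\mc{N}$ are positive definite, $\tilde{\mc{A}}\neq\mc{O}$, so $\|\tilde{\mc{A}}\|>0$; dividing once by $\|\tilde{\mc{A}}\|$ gives $1\leq\|\tilde{\mc{A}}\|\,\|\tilde{\mc{A}}^{\dg}\|$.

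For the upper bound the crux is the tensor analogue of the classical estimate $\|\mc{B}\|\leq 2\,w(\mc{B})$. I would obtain it from the Cartesian decomposition $\mc{B}=\mc{H}+i\mc{K}$ with Hermitian parts $\mc{H}=\tfrac12(\mc{B}+\mc{B}^H)$ and $\mc{K}=\tfrac{1}{2i}(\mc{B}-\mc{B}^H)$: for a Hermitian tensor the numerical radius equals the spectral norm, so $\|\mc{H}\|=w(\mc{H})\leq w(\mc{B})$ and $\|\mc{K}\|=w(\mc{K})\leq w(\mc{B})$, whence $\|\mc{B}\|\leq\|\mc{H}\|+\|\mc{K}\|\leq 2\,w(\mc{B})$. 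Applying this to $\mc{B}=\tilde{\mc{A}}$ and $\mc{B}=\tilde{\mc{A}}^{\dg}$ and multiplying the two estimates gives $\|\tilde{\mc{A}}\|\,\|\tilde{\mc{A}}^{\dg}\|\leq 4\,w(\tilde{\mc{A}})\,w(\tilde{\mc{A}}^{\dg})$, which chains with the identity above to finish the proof.

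The step I expect to be the main obstacle is justifying $\|\mc{B}\|\leq 2\,w(\mc{B})$ in the tensor setting, that is, verifying that $w(\mc{H})=\|\mc{H}\|$ for Hermitian tensors and that $w$ and $\|\cdot\|$ obey the same basic estimates as in the matrix case. Both facts reduce cleanly through the reshaping bijection of Lemma \ref{lem:rsh EP prop}: under $\mathrm{rsh}$ the Einstein product becomes matrix multiplication and the tensor inner product becomes the vector inner product, so $W(\mc{B})=W(\mathrm{rsh}(\mc{B}))$, hence $w(\mc{B})=w(\mathrm{rsh}(\mc{B}))$, while $\|\mc{B}\|=\|\mathrm{rsh}(\mc{B})\|_{2}$; the classical matrix inequality then transfers verbatim. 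The remaining manipulations in the first step are routine once the cancellations $\mc{M}^{1/2}\m\mc{M}^{-1/2}=\mc{I}$ and $\mc{N}^{1/2}\n\mc{N}^{-1/2}=\mc{I}$ are recorded.
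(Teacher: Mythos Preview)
Your argument is mathematically sound, but you are proving Theorem~\ref{thmnormineq} rather than Corollary~\ref{cor:thmnormineq1}. The corollary is precisely the unweighted special case $\mc{M}=\mc{N}=\mc{I}$, for which $\tilde{\mc{A}}=\mc{A}$, $\|\mc{A}\|_{\mc{M}\mc{N}}=\|\mc{A}\|$, and $\mc{A}_{\mc{M},\mc{N}}^{\dg}=\mc{A}^{\dg}$; the paper obtains it simply by specializing the theorem to identity weights. Your opening reduction via Lemma~\ref{lem5} and Theorem~\ref{thmwmpi} is therefore the proof of the \emph{theorem}, not of the corollary, and is superfluous here.

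The second half of your proposal, namely the direct argument that $1\le\|\tilde{\mc{A}}\|\,\|\tilde{\mc{A}}^{\dg}\|\le 4\,w(\tilde{\mc{A}})\,w(\tilde{\mc{A}}^{\dg})$ from $\tilde{\mc{A}}\n\tilde{\mc{A}}^{\dg}\n\tilde{\mc{A}}=\tilde{\mc{A}}$ plus submultiplicativity (lower bound) and the Cartesian-decomposition estimate $\|\mc{B}\|\le 2\,w(\mc{B})$ (upper bound), is already a complete and correct standalone proof of the corollary once you rename $\tilde{\mc{A}}$ as $\mc{A}$. In effect your two halves together reconstruct the theorem and then the corollary drops out; the paper instead takes the theorem as given and reads off the corollary in one line.
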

The next corollary is a generalization of Theorem 7 in \cite{chien2020}. 
                      \begin{corollary}\label{cor:thmnormineq2}
Let $0\neq A\in\mb{C}^{n\times n}$, and $M, N \in  \mb{C}^{n\times n}$ be two Hermitian positive definite matrices. Then, for the weighted matrix norm $\|A\|_{MN}=\|M^{1/2}AN^{-1/2}\|$, 
$$1 \leq \|A\|_{MN}\|A_{M,N}^{\dagger}\|_{NM} \leq 4\omega(\tilde{A})\omega (\tilde{A}^{\dagger}),$$
where $\tilde{A}=M^{1/2}AN^{-1/2}$, $A_{M,N}^{\dagger}=N^{-1/2}\tilde{A}^{\dagger}M^{1/2}$ and $\|.\|$ is the spectral norm.
\end{corollary}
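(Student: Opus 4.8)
The plan is to reduce the two weighted matrix norms to ordinary spectral norms of $\tilde{A}$ and $\tilde{A}^{\dagger}$, and then invoke the unweighted inequality already established. The corollary defines $\|A\|_{MN}=\|M^{1/2}AN^{-1/2}\|=\|\tilde{A}\|$; by the matrix case of Lemma \ref{lem5}(ii) we have $\|A_{M,N}^{\dagger}\|_{NM}=\|N^{1/2}A_{M,N}^{\dagger}M^{-1/2}\|$, and substituting $A_{M,N}^{\dagger}=N^{-1/2}\tilde{A}^{\dagger}M^{1/2}$ from Theorem \ref{thmwmpi} this collapses to $\|N^{1/2}(N^{-1/2}\tilde{A}^{\dagger}M^{1/2})M^{-1/2}\|=\|\tilde{A}^{\dagger}\|$. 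Hence $\|A\|_{MN}\,\|A_{M,N}^{\dagger}\|_{NM}=\|\tilde{A}\|\,\|\tilde{A}^{\dagger}\|$, and it remains only to bound this product.

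For the lower bound I would use the first Penrose identity $\tilde{A}=\tilde{A}\tilde{A}^{\dagger}\tilde{A}$: taking spectral norms and using submultiplicativity gives $\|\tilde{A}\|\le\|\tilde{A}\tilde{A}^{\dagger}\|\,\|\tilde{A}\|$, and since $A\neq 0$ forces $\tilde{A}\neq 0$ (because $M^{1/2}$ and $N^{-1/2}$ are invertible), dividing by $\|\tilde{A}\|>0$ yields $1\le\|\tilde{A}\tilde{A}^{\dagger}\|\le\|\tilde{A}\|\,\|\tilde{A}^{\dagger}\|$. For the upper bound I would apply the standard comparison between the spectral norm and the numerical radius of a square matrix, $\|B\|\le 2\omega(B)$, to $B=\tilde{A}$ and to $B=\tilde{A}^{\dagger}$, and multiply the two to obtain $\|\tilde{A}\|\,\|\tilde{A}^{\dagger}\|\le 4\omega(\tilde{A})\omega(\tilde{A}^{\dagger})$. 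Equivalently, both bounds drop out at once from Corollary \ref{cor:thmnormineq1} (Theorem 5.9 of \cite{nirmal}) applied to the nonzero matrix $\tilde{A}$, which is why the reduction in the first paragraph is the whole point.

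There is no genuine obstacle here: the corollary is essentially the order-one specialization of Theorem \ref{thmnormineq}, so the only items needing care are the algebraic bookkeeping that unwinds $\|A\|_{MN}$ and $\|A_{M,N}^{\dagger}\|_{NM}$ into $\|\tilde{A}\|$ and $\|\tilde{A}^{\dagger}\|$, and the elementary observation that $\tilde{A}\neq 0$ whenever $A\neq 0$. Should a self-contained matrix argument be preferred, one can instead run the three steps directly on $A$ and $A_{M,N}^{\dagger}$ using $AA_{M,N}^{\dagger}A=A$ together with the matrix versions of the identities in Lemma \ref{lem10}, but the passage to $\tilde{A}$ is the shortest route.
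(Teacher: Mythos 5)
Your reduction of $\|A\|_{MN}\|A_{M,N}^{\dagger}\|_{NM}$ to $\|\tilde{A}\|\,\|\tilde{A}^{\dagger}\|$ via the matrix case of Lemma \ref{lem5} and the identity $A_{M,N}^{\dagger}=N^{-1/2}\tilde{A}^{\dagger}M^{1/2}$, followed by the unweighted bound $1\le\|\tilde{A}\|\|\tilde{A}^{\dagger}\|\le 4\omega(\tilde{A})\omega(\tilde{A}^{\dagger})$, is correct and is essentially the paper's route, which obtains the corollary as the matrix specialization of Theorem \ref{thmnormineq}. No gaps; the care you take with $\tilde{A}\neq 0$ when $A\neq 0$ is exactly the right detail to note.
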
 
With respect to the diagonal weights, the weighted Moore-Penrose inverse of a weighted shift matrix is again a weighted shift matrix; this is shown in the following theorem. Also, for their numerical radii, some upper bounds are established.
                     \begin{theorem}\label{th7}
Let $A\in \mathbb{C}^{n\times n}$ be a weighted shift matrix                                            \begin{equation}\label{eq10}
A=\begin{pmatrix}
0&a_1&0&\cdots&0\\
0&0&a_2&\cdots&0\\
\vdots&\vdots&\ddots&\ddots&\vdots\\
0&0&0&\ddots&a_{n-1}\\
0&0&0&\cdots&0
\end{pmatrix}.
\end{equation}
If $M,N\in \mathbb{C}^{n\times n}$ are two positive diagonal matrices, then 
                       \begin{equation}\label{eq11}
A_{M,N}^{\dagger}= \begin{pmatrix}
0&0&\cdots&0&0\\
1/a_1&0&\cdots&0&0\\
0&1/a_2&\cdots&0&0\\
\vdots&\vdots&\ddots&\vdots&\vdots\\
0&0&\cdots&1/a_{n-1}&0\\
\end{pmatrix}.
\end{equation} 
Furthermore,
                      \begin{enumerate}[(i)]
    \item$W(A)$, $W(A_{M,N}^{\dagger})$ are circular disks centered at the origin, and $$\omega({A})~ \omega ({A}_{M,N}^{\dagger})\leq \frac{\textnormal{max}|a_k|}{\textnormal{min}|a_k|}\cos^2\left(\frac{\pi}{n+1}\right),$$ where minimum is taken over those $k$ with $a_k\neq0.$
    \item If $a_ka_{n-k}=1$ for all $k=1,2,\hdots,[n/2]$, then $W(A)=W(A_{M,N}^{\dagger})$, and $$\omega({A}) =\omega ({A}_{M,N}^{\dagger})\leq \textnormal{max}\{|a_k|, 1/|a_k|\}\cos\left(\frac{\pi}{n+1}\right).$$  
    \end{enumerate}
\end{theorem}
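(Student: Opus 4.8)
The plan is to reduce everything to the matrix case by exploiting the explicit form of the reshaping/weighting machinery together with classical facts about weighted shift matrices. First I would verify the formula \eqref{eq11} for $A_{M,N}^{\dagger}$ directly. Since $M$ and $N$ are positive diagonal matrices, $\tilde A = M^{1/2}AN^{-1/2}$ is again a weighted shift matrix (the superdiagonal entries get rescaled to $\tilde a_k = (m_k/n_{k+1})^{1/2}a_k$, but crucially the zero pattern is preserved), so $\tilde A^{\dagger}$ is the obvious lower-shift matrix with entries $1/\tilde a_k$ on the subdiagonal. Then Theorem \ref{thmwmpi} gives $A_{M,N}^{\dagger} = N^{-1/2}\tilde A^{\dagger}M^{1/2}$, and multiplying out the three diagonal/shift factors makes all the rescaling factors cancel, leaving exactly the matrix in \eqref{eq11}. (Here I should double-check the degenerate case where some $a_k=0$: then the corresponding column of the shift is zero, the rank drops, and one checks the Moore--Penrose conditions \eqref{wmpeq1}--\eqref{wmpeq4} entrywise still hold with the convention that the $1/a_k$ slot is replaced by $0$.)

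Next I would handle part (i). Both $A$ and $A_{M,N}^{\dagger}$ are (unweighted) shift-type matrices — that is, matrices whose only nonzero entries lie on a single off-diagonal — so each is unitarily similar to $e^{i\theta}$ times itself for every $\theta$ via a diagonal unitary $\mathrm{diag}(1,e^{i\theta},e^{2i\theta},\dots)$; hence $W(A)$ and $W(A_{M,N}^{\dagger})$ are invariant under rotation about the origin, and being convex (Toeplitz--Hausdorff, cf. the discussion in Section \ref{sec1}) they are circular disks centered at $0$. The radii are the numerical radii $\omega(A)$ and $\omega(A_{M,N}^{\dagger})$. For the bound, I would invoke the known sharp estimate for a weighted shift with weights $b_1,\dots,b_{n-1}$, namely $\omega \le (\max_k|b_k|)\cos\!\big(\tfrac{\pi}{n+1}\big)$ (this is the classical result that the numerical radius of a weighted shift is at most the largest weight times the cosine of $\pi/(n+1)$, with equality for the constant shift). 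Applying it once with weights $a_k$ and once with weights $1/a_k$ (restricting to $k$ with $a_k\neq 0$), and multiplying the two inequalities, yields
\begin{equation*}
\omega(A)\,\omega(A_{M,N}^{\dagger}) \le \Big(\max_k|a_k|\Big)\Big(\max_k\tfrac{1}{|a_k|}\Big)\cos^2\!\Big(\tfrac{\pi}{n+1}\Big) = \frac{\max_k|a_k|}{\min_k|a_k|}\cos^2\!\Big(\tfrac{\pi}{n+1}\Big),
\end{equation*}
which is exactly the claim.

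For part (ii), the hypothesis $a_ka_{n-k}=1$ for all $k=1,\dots,[n/2]$ says precisely that reversing the order of the weights turns $\{a_k\}$ into $\{1/a_k\}$. Reversing the index order of a shift matrix is a unitary similarity (conjugation by the anti-identity permutation matrix, possibly composed with a diagonal unitary), and one checks that this similarity carries the weighted shift with weights $(a_1,\dots,a_{n-1})$ to the (lower) shift with weights $(1/a_{n-1},\dots,1/a_1)$, i.e.\ to $A_{M,N}^{\dagger}$ up to the harmless transpose between upper- and lower-shift. Numerical range is invariant under unitary similarity and under transpose (it only conjugates $W$, but $W$ here is a disk about $0$, hence fixed), so $W(A)=W(A_{M,N}^{\dagger})$ and the two numerical radii coincide. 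The stated bound then follows from the part-(i) estimate applied with the single weight collection, bounding each $|a_k|$ and each $1/|a_k|$ by $\max\{|a_k|,1/|a_k|\}$.

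The main obstacle I anticipate is not any single step but rather getting the bookkeeping exactly right in two places: (a) the cancellation of the $M^{1/2}$, $N^{-1/2}$ factors when multiplying out $N^{-1/2}\tilde A^{\dagger}M^{1/2}$ to land on \emph{exactly} \eqref{eq11} (one must track that the diagonal scalars attached to the $k$-th super/subdiagonal slot are $(n_k)^{-1/2}\cdot(1/\tilde a_k)\cdot(m_{k+1})^{1/2}$ and that $\tilde a_k=(m_{k+1}/n_k)^{1/2}a_k$ — note the index shift — so the product collapses to $1/a_k$); and (b) pinning down the precise unitary (permutation $\times$ diagonal) that realizes the index-reversal symmetry in part (ii), since an off-by-one in which index pairs with which will break the $a_ka_{n-k}=1$ matching. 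The numerical-radius bound for weighted shifts I would cite rather than reprove, as it is standard (e.g.\ from the operator-theory references \cite{horn1991} already in the bibliography).
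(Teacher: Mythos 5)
Your proposal is correct and follows essentially the same route as the paper: verify \eqref{eq11} by exploiting that the diagonal weights cancel (equivalently, check the four equations \eqref{wmpeq1}--\eqref{wmpeq4} directly), use the diagonal-unitary rotation argument together with convexity to conclude that $W(A)$ and $W(A_{M,N}^{\dagger})$ are disks centered at the origin, apply the standard weighted-shift estimate $\omega \le \max_k|a_k|\cos\left(\frac{\pi}{n+1}\right)$ to $A$ and to the lower shift $A_{M,N}^{\dagger}$ (whose weights are $1/a_k$), and for (ii) use the flip/reversal unitary to identify $A_{M,N}^{\dagger}$ with $A$ up to unitary similarity when $a_ka_{n-k}=1$. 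The index slip in your closing caveat (it should be $\tilde a_k=(m_k/n_{k+1})^{1/2}a_k$ with the $(k+1,k)$ entry of $N^{-1/2}\tilde A^{\dagger}M^{1/2}$ equal to $n_{k+1}^{-1/2}(1/\tilde a_k)m_k^{1/2}$) is immaterial, since the product still collapses to $1/a_k$ exactly as you claim.
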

Instead of diagonal matrices, if we take $M$ and $N$ as identity matrices, then Theorem \ref{th7} coincides with the next result.  
                       \begin{corollary}[Theorem 8, \cite{chien2020}]\label{cor:th7.1}\leavevmode\\ 
Let $A\in \mathbb{C}^{n\times n}$ be a weighted shift matrix                
                      \begin{equation*}
A=\begin{pmatrix}
0&a_1&0&\cdots&0\\
0&0&a_2&\cdots&0\\
\vdots&\vdots&\ddots&\ddots&\vdots\\
0&0&0&\ddots&a_{n-1}\\
0&0&0&\cdots&0
\end{pmatrix}.
\end{equation*}
Then, 
                     \begin{equation*}
A^{\dagger}= \begin{pmatrix}
0&0&\cdots&0&0\\
1/a_1&0&\cdots&0&0\\
0&1/a_2&\cdots&0&0\\
\vdots&\vdots&\ddots&\vdots&\vdots\\
0&0&\cdots&1/a_{n-1}&0\\
\end{pmatrix}.
\end{equation*}
Furthermore,
                    \begin{enumerate}[(i)]
    \item $W(A)$, $W(A^{\dagger})$ are circular disks centered at the origin, and $$1\leq \omega({A})~ \omega ({A}^{\dagger})\leq \frac{\textnormal{max}|a_k|}{\textnormal{min}|a_k|}\cos^2\left(\frac{\pi}{n+1}\right),$$ where minimum is taken over those $k$ with $a_k\neq0.$
    \item If $a_ka_{n-k}=1$ for all $k=1,2,\hdots,[n/2]$, then $W(A)=W(A^{\dagger})$, and $$\frac{1}{2}\leq \omega({A}) =\omega ({A}^{\dagger})\leq \textnormal{max}\{|a_k|, 1/|a_k|\}\cos\left(\frac{\pi}{n+1}\right).$$  
\end{enumerate} 
\end{corollary}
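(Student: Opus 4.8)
The plan is to reduce everything to the ordinary (unweighted) Moore--Penrose inverse and then quote the classical theory of numerical ranges of shift matrices. First I would apply Theorem \ref{thmwmpi}: writing $M=\mathrm{diag}(m_1,\dots,m_n)$, $N=\mathrm{diag}(\nu_1,\dots,\nu_n)$ with all entries positive and $\tilde A=M^{1/2}AN^{-1/2}$, one checks entrywise that $\tilde A$ is again a weighted shift matrix, with superdiagonal weights $\tilde a_k=\sqrt{m_k/\nu_{k+1}}\;a_k$. Because the nonzero rank-one pieces of a shift matrix act on pairwise orthogonal coordinate subspaces, $\tilde A^{\dagger}$ is the subdiagonal matrix with $(\tilde A^{\dagger})_{k+1,k}=1/\tilde a_k$ (interpreting $1/0$ as $0$ if some $a_k$, hence $\tilde a_k$, vanishes). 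Then $A_{M,N}^{\dagger}=N^{-1/2}\tilde A^{\dagger}M^{1/2}$ has $(k+1,k)$ entry $\nu_{k+1}^{-1/2}\tilde a_k^{-1}m_k^{1/2}=1/a_k$: the weights cancel, giving \eqref{eq11} and, incidentally, $A_{M,N}^{\dagger}=A^{\dagger}$ independently of $M$ and $N$.

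For part (i), I would first note that $A$ and $A_{M,N}^{\dagger}$ are singular, so $0\in\sigma(A)\subseteq W(A)$ and $0\in\sigma(A_{M,N}^{\dagger})\subseteq W(A_{M,N}^{\dagger})$. Conjugating by the diagonal unitary $D_\theta=\mathrm{diag}(1,e^{\mathrm{i}\theta},\dots,e^{(n-1)\mathrm{i}\theta})$ shows $D_\theta^{*}AD_\theta=e^{\mathrm{i}\theta}A$ and $D_\theta^{*}A_{M,N}^{\dagger}D_\theta=e^{-\mathrm{i}\theta}A_{M,N}^{\dagger}$, so both numerical ranges are invariant under all rotations about the origin; combined with the Toeplitz--Hausdorff convexity this forces them to be circular disks centered at the origin, with radii $\omega(A)$ and $\omega(A_{M,N}^{\dagger})$. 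Since $A^{n}=(A_{M,N}^{\dagger})^{n}=0$, the classical Haagerup--de la Harpe bound $\omega(T)\le\|T\|\cos(\pi/(n+1))$ for $n\times n$ matrices with $T^{n}=0$ gives $\omega(A)\le\max_k|a_k|\cos(\pi/(n+1))$ and $\omega(A_{M,N}^{\dagger})\le\big(\min_{a_k\neq0}|a_k|\big)^{-1}\cos(\pi/(n+1))$, the norms being read off from the diagonal matrices $A^{*}A$ and $(A_{M,N}^{\dagger})^{*}A_{M,N}^{\dagger}$; multiplying the two bounds yields the asserted inequality.

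For part (ii), the hypothesis $a_ka_{n-k}=1$ for $k\le[n/2]$ easily propagates to all $1\le k\le n-1$. Let $P$ be the reversal permutation matrix, $Pe_k=e_{n+1-k}$; a direct computation gives $(PAP)_{k+1,k}=a_{n-k}=1/a_k=(A_{M,N}^{\dagger})_{k+1,k}$, that is $A_{M,N}^{\dagger}=PAP$. As $P$ is unitary, $W(A_{M,N}^{\dagger})=W(A)$ and $\omega(A_{M,N}^{\dagger})=\omega(A)$. Moreover the pairing makes $\{|a_k|\}_{k=1}^{n-1}$ closed under reciprocals, so $\max_k|a_k|=\big(\min_k|a_k|\big)^{-1}=\max_k\max\{|a_k|,1/|a_k|\}$, and both estimates from part (i) collapse to $\max_k\max\{|a_k|,1/|a_k|\}\cos(\pi/(n+1))$, which is the stated bound.

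The hard part will be Step~1 --- keeping the indices and the half-powers straight so that the weights genuinely cancel, and dealing cleanly with the rank-deficient situation when some $a_k=0$, so that the sparse formula for $\tilde A^{\dagger}$ (and hence for $A_{M,N}^{\dagger}$) still holds with the convention $1/0=0$. Once $A_{M,N}^{\dagger}$ has been identified with $A^{\dagger}$, parts (i) and (ii) are essentially the unweighted shift-matrix arguments already used in \cite{chien2020}.
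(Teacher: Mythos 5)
Your reduction is correct as far as it goes: the identification $\tilde A^{\dagger}$ and the cancellation giving $A_{M,N}^{\dagger}=A^{\dagger}$ are right (you are in effect reproving Theorem \ref{th7}, whereas the paper obtains this corollary simply by putting $M=N=I$ in Theorem \ref{th7}, the statement itself being quoted from \cite{chien2020}); the rotation-invariance argument for the disks, the Haagerup--de la Harpe bounds $\omega(A)\le\max_k|a_k|\cos(\pi/(n+1))$, $\omega(A^{\dagger})\le(\min_{a_k\neq0}|a_k|)^{-1}\cos(\pi/(n+1))$, and the reversal-permutation similarity $A^{\dagger}=PAP$ in part (ii) are all sound. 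The genuine gap is that you never address the left-hand inequalities in the statement: part (i) asserts a lower bound on $\omega(A)\,\omega(A^{\dagger})$ and part (ii) asserts $\tfrac12\le\omega(A)=\omega(A^{\dagger})$, and ``multiplying the two bounds'' produces only the right-hand inequalities. Nothing in your write-up yields these lower bounds.

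They are not hard, but they need an argument. For (ii): the hypothesis $a_ka_{n-k}=1$ forces $\max_k|a_k|\ge1$, and taking the unit vector $x=(e_k+e_{k+1})/\sqrt2$ gives $|x^{*}Ax|=|a_k|/2$, so $\omega(A)\ge\max_k|a_k|/2\ge\tfrac12$. For (i): from $\omega(T)\ge\|T\|/2$ and $\|A\|\,\|A^{\dagger}\|=\max_k|a_k|/\min_{a_k\neq0}|a_k|\ge1$ (equivalently, from Corollary \ref{cor:thmnormineq1} or Theorem 7 of \cite{chien2020}) one gets $\omega(A)\,\omega(A^{\dagger})\ge\tfrac14$. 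Note that the constant $1$ printed in part (i) cannot be literally correct: for $n=2$, $a_1=1$ one has $\omega(A)=\omega(A^{\dagger})=\tfrac12$, so the product is $\tfrac14$; the intended bound is evidently $\tfrac14$, but a complete proof of the corollary as stated has to confront these lower bounds rather than pass over them.
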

We end this section with an example in which we plot the numerical ranges of a tensor, and its Moore-Penrose inverse and weighted Moore-Penrose inverse using the Algorithm 1 of \cite{nirmal}. To compute the Moore-Penrose inverse and the weighted Moore-Penrose inverse, we apply Algorithms \ref{alg1} and \ref{alg2} here. 
                    \begin{example}
Consider $\mc{A}\in\mb{C}^{2\times3\times 2\times 3}$ and the two weights $\mc{M},\mc{N}$ in  $\mb{C}^{2\times3\times 2\times 3}$ such that
                    \begin{center}
{\begin{tabular}{ccc|ccc|ccc|ccc|ccc|ccc}
\hline
\multicolumn{3}{c}{$\mc{A}(:,:,1,1)$} & \multicolumn{3}{c}{$\mc{A}(:,:,2,1)$} & \multicolumn{3}{c}{$\mc{A}(:,:,1,2)$} & \multicolumn{3}{c}{$\mc{A}(:,:,2,2)$} &
\multicolumn{3}{c}{$\mc{A}(:,:,1,3)$} &
\multicolumn{3}{c}{$\mc{A}(:,:,2,3)$} \\
\hline
    1 & 1 & 2 & 1 & 1 & 1 & 2 & 2 & 1 & 3 & 3 & 2 & 1 & 1 & 2 & 3 & 3 & 3\\
    1 & 1 & 2 & 2 & 2 & 1 & 2 & 2 & 1 & 4 & 4 & 2 & 1 & 1 & 2 & 3 & 3 & 3   \\
\hline
\end{tabular}},
\end{center}
                      \begin{center}
{\begin{tabular}{ccc|ccc|ccc|ccc|ccc|ccc}
\hline
\multicolumn{3}{c}{$\mc{M}(:,:,1,1)$} & \multicolumn{3}{c}{$\mc{M}(:,:,2,1)$} & \multicolumn{3}{c}{$\mc{M}(:,:,1,2)$} & \multicolumn{3}{c}{$\mc{M}(:,:,2,2)$} &
\multicolumn{3}{c}{$\mc{M}(:,:,1,3)$} &
\multicolumn{3}{c}{$\mc{M}(:,:,2,3)$} \\
\hline
     1 & 0 & 0 & 0 & 0 & 0 & 0 & 3 & 0 & 0 & 0 & 0 & 0 & 0 & 2 & 0 & 0 & 0\\
     0 & 0 & 0 & 2 & 0 & 0 & 0 & 0 & 0 & 0 & 1 & 0 & 0 & 0 & 0 & 0 & 0 & 3   \\
\hline
\end{tabular}},
\end{center}
and 
                  \begin{center}
{\begin{tabular}{ccc|ccc|ccc|ccc|ccc|ccc}
\hline
\multicolumn{3}{c}{$\mc{N}(:,:,1,1)$} & \multicolumn{3}{c}{$\mc{N}(:,:,2,1)$} & \multicolumn{3}{c}{$\mc{N}(:,:,1,2)$} & \multicolumn{3}{c}{$\mc{N}(:,:,2,2)$} &
\multicolumn{3}{c}{$\mc{N}(:,:,1,3)$} &
\multicolumn{3}{c}{$\mc{N}(:,:,2,3)$} \\
\hline
    3 & 0 & 0 & 0 & 0 & 0 & 0 & 1 & 0 & 0 & 0 & 0 & 0 & 0 & 1 & 0 & 0 & 0\\
    0 & 0 & 0 & 2 & 0 & 0 & 0 & 0 & 0 & 0 & 1 & 0 & 0 & 0 & 0 & 0 & 0 & 1   \\
\hline
\end{tabular}}.
\end{center}
By Algorithms \ref{alg1} and \ref{alg2}, the Moore-Penrose inverse and the weighted Moore-Penrose inverse of $\mc{A}$ are given as 
                   \begin{center}
{\begin{tabular}{ccc|ccc|ccc}
\hline
\multicolumn{3}{c}{$\mc{A}^{\dg}(:,:,1,1)$} & \multicolumn{3}{c}{$\mc{A}^{\dg}(:,:,2,1)$} & \multicolumn{3}{c}{$\mc{A}^{\dg}(:,:,1,2)$}\\
\hline
     -3/26 & 9/26 & -3/26 & 0 & -1/6 & 0 & -3/26 & 9/26 & -3/26\\
     -11/26 & -1/13 & 3/13 & 1/3 & 1/6 & -1/6 & -11/26 & -1/13 & 3/13\\
\hline
\end{tabular}}
\end{center}
                     \begin{center}
{\begin{tabular}{ccc|ccc|ccc}
\hline
 \multicolumn{3}{c}{$\mc{A}^{\dg}(:,:,2,2)$} &
\multicolumn{3}{c}{$\mc{A}^{\dg}(:,:,1,3)$} &
\multicolumn{3}{c}{$\mc{A}^{\dg}(:,:,2,3)$} \\
\hline
     0 & -1/6 & 0 & 2/13 & -5/39 & 2/13 & 2/13 & -5/39 & 2/13\\
     1/3 & 1/6 & -1/6 & 5/78 & -5/78 & 1/39 & 5/78 & -5/78 & 1/39   \\
\hline
\end{tabular}}
\end{center}
and 
                      \begin{center}
{\begin{tabular}{ccc|ccc|ccc}
\hline
\multicolumn{3}{c}{$\mc{A}_{\mc{M},\mc{N}}^{\dg}(:,:,1,1)$} & \multicolumn{3}{c}{$\mc{A}_{\mc{M},\mc{N}}^{\dg}(:,:,2,1)$} & \multicolumn{3}{c}{$\mc{A}_{\mc{M},\mc{N}}^{\dg}(:,:,1,2)$} \\
\hline
    -1/32 & 7/32 & -3/32 & 1/90 & -3/10 & 1/30 & -3/32 & 21/32 & -9/32 \\
    -5/32 & -3/32 & 1/8 & 29/90 & 31/90 & -4/15 & -15/32 & -9/32 & 3/8    \\
\hline
\end{tabular}}
\end{center}
                      \begin{center}
{\begin{tabular}{ccc|ccc|ccc}
\hline
 \multicolumn{3}{c}{$\mc{A}_{\mc{M},\mc{N}}^{\dg}(:,:,2,2)$} &
\multicolumn{3}{c}{$\mc{A}_{\mc{M},\mc{N}}^{\dg}(:,:,1,3)$} &
\multicolumn{3}{c}{$\mc{A}_{\mc{M},\mc{N}}^{\dg}(:,:,2,3)$} \\
\hline
     1/180 & -3/20 & 1/60 & 17/300 & -13/100 & 17/100 & 17/200 & -39/200 & 51/200\\
    29/180 & 31/180 & -2/15 & 13/300 & -13/300 & 1/25 & 13/200 & -13/200 & 3/50   \\
\hline
\end{tabular}},
\end{center} respectively. Now, applying Algorithm 1 of \cite{nirmal} to the tensors $\mc{A}$, $\mc{A}^{\dg}$, and $\mc{A}_{\mc{M},\mc{N}}^{\dg}$ for 500 different choices of $\theta$, we obtain Figure \ref{fig:nrfig1},
and the colored doted points inside the plotted region represent the eigenvalues of the corresponding tensor. 
                    \begin{figure}[h!]
    \centering
    \includegraphics[scale=0.5]{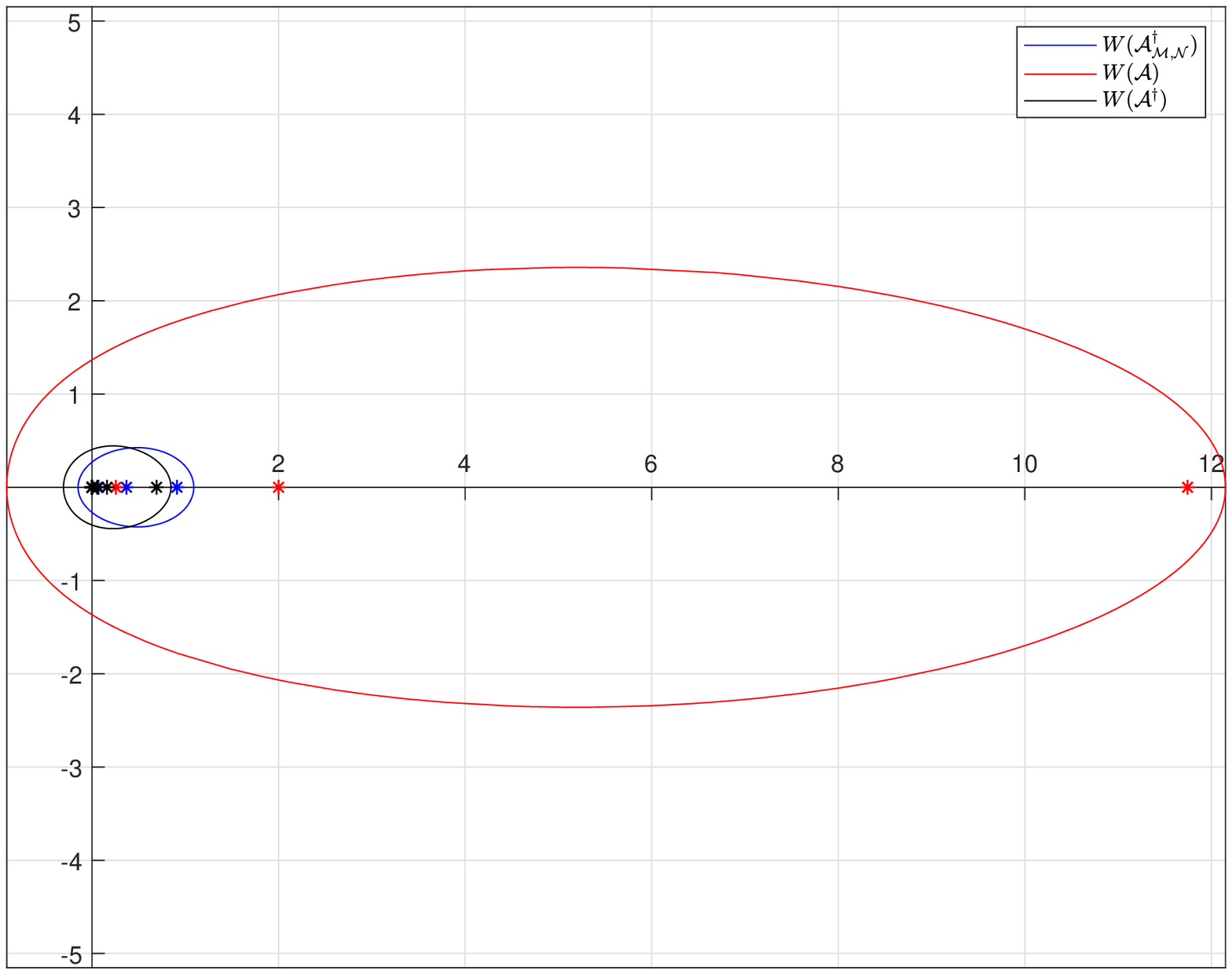}
    \caption{Numerical ranges of the tensors $\mc{A}$, $\mc{A}^{\dagger}$, and $\mc{A}_{\mc{M},\mc{N}}^{\dg}$}
    \label{fig:nrfig1}
\end{figure}

\end{example}
\newpage
\section{Conclusions}\label{sec:conclusion}
In this article, we have introduced the notion of the WSVD and derived the formula for computing the weighted Moore-Penrose inverse of an arbitrary-order tensor using the WSVD. After that, we have defined the notions of weighted normal tensor and weighted tensor norm. Further, we have established several properties that examine some relationship between a tensor's numerical range and its weighted Moore-Penrose inverse. An upper bound for the product of the numerical radii of a weighted shift matrix and its weighted Moore-Penrose inverse with diagonal weights has been established. An equality between the numerical ranges of the weighted Moore-Penrose inverse and the weighted conjugate transpose of a special tensor has been given. Our work on numerical ranges and numerical radii will also be beneficial in finding the iterative solution to tensor equations.
These theories add new contributions to the theory of tensors and will be crucial for future research on tensors.\\
\section*{Acknowledgements}
The first author acknowledges the support of the Council of Scientific and Industrial Research, India. We thank Dr. Krushnachandra Panigrahy for his insightful suggestions and discussions.
\section*{Conflict of Interest}
 The authors declare that there is no conflict of interest.

\section*{Data Availability Statement}
Data sharing is not applicable to this article as no new data is analyzed in this study.

\bibliographystyle{amsplain}
             
\end{document}